\newcommand{\prC}{\mathbb{C}}
\newcommand{\prQ}{\mathbb{Q}}
\newcommand{\prZ}{\mathbb{Z}}
\newcommand{\prN}{\mathbb{N}}
\newcommand{\prO}{\mathcal{O}}
\newcommand{\prR}{\mathbb{R}}
\newcommand{\prP}{\mathbb{P}}
\newcommand{\prEps}{\varepsilon}
\newcommand{\pre}{\epsilon}
\newcommand{\prm}{\mathfrak{m}}
\newcommand{\praR}{\undertilde{R}}
\newcommand{\prrk}{\operatorname{rk}}
\newcommand{\prbrk}{\operatorname{brk}}
\newcommand{\prsrk}{\operatorname{srk}}
\newcommand{\prbsrk}{\operatorname{bsrk}}
\newcommand{\prid}{\operatorname{id}}
\newcommand{\prHom}{\operatorname{Hom}}
\newcommand{\prGL}{\operatorname{GL}}
\newcommand{\prSL}{\operatorname{SL}}
\newcommand{\prGr}{\operatorname{Gr}}
\newcommand{\prtr}{\operatorname{trace}}
\newcommand{\prSym}{\operatorname{Sym}}
\newcommand{\prsym}{\operatorname{sym}}
\newcommand{\prAnn}{\operatorname{Ann}}
\newcommand{\prProj}{\operatorname{Proj}}
\newcommand{\prExample}{$\diamondsuit$}
\newtheorem{thm}{Theorem}[section]
\newtheorem{cor}[thm]{Corollary}
\newtheorem{prop}[thm]{Proposition}
\newtheorem{lem}[thm]{Lemma}
\newtheorem{conj}[thm]{Conjecture}
\newtheorem{defn}[thm]{Definition}
\theoremstyle{remark}
\newtheorem{remark}[thm]{Remark}
\newtheorem{ex}[thm]{Example}
\newenvironment{numlist}{%

	\begin{enumerate}}%
	{\end{enumerate}}
\newenvironment{abclist}{%

	\begin{enumerate}}%
	{\end{enumerate}}
\title{\textbf{Tensor Rank and Complexity}}
\author{Giorgio Ottaviani\thanks{Universit\`a di Firenze, Italy, \texttt{giorgio.ottaviani@unifi.it} Giorgio Ottaviani is a member of  GNSAGA-INDAM.} \ and
Philipp Reichenbach\thanks{Technische Universit\"at Berlin, Germany, \texttt{reichenbach@tu-berlin.de}. Supported by the European Research Council (ERC) under the European’s Horizon 2020 research and innovation programme (grant agreement no. 787840).}}
\date{}
\begin{document}


\maketitle

\vspace*{-7pt}

\begin{abstract}
These lecture notes are intended as an introduction to several notions of tensor rank and their connections to the asymptotic complexity of matrix multiplication. The latter is studied with the exponent of matrix multiplication, which will be expressed in terms of tensor (border) rank, (border) symmetric rank and the asymptotic rank of certain tensors. We introduce the multilinear rank of a tensor as well, deal with the concept of tensor equivalence and study prehomogeneous vector spaces with the castling transform. Moreover, we treat Apolarity Theory and use it to determine the symmetric rank (Waring rank) of some symmetric tensors.
\end{abstract}

\vspace*{2pt}

\tableofcontents

\newpage

\addcontentsline{toc}{section}{Introduction}
\section*{Introduction}\label{pr:secIntro}

This article grew out of three lectures held by the first author at the Fall School \emph{Varieties, Polyhedra, Computation}, which took place at FU Berlin from September 30 to October 4, 2019 and was part of the Thematic Einstein Semester on Algebraic Geometry in Berlin. The goal of the lectures was to provide an introduction to different notions of tensor rank and their relation to the asymptotic complexity of matrix multiplication. Thereby, special focus has been put on expressing the exponent $\omega$ of matrix multiplication with the various concepts of rank. These extended lecture notes present the covered material in detail, come with additional content and partly aim at offering a survey on the topic.

The first section recalls well-known results on matrix rank and equivalence of matrices. This motivates a natural generalization to $d$-tensors, that is presented in part two. There, the classification of all cases with finitely many tensor equivalence classes is stated. Furthermore, prehomogeneous vector spaces are studied with the help of the castling transform and Venturelli's refinement of the Sato-Kimura Theorem is presented. In the third section the concepts of tensor (border) rank and multilinear rank are introduced. An extended example illustrates how these notions can be used to completely classify tensor equivalence in this particular case.

The fourth part defines the exponent $\omega$ for studying the asymptotic complexity of matrix multiplication. Then, $\omega$ will be phrased in terms of the (border) rank of the tensor of matrix multiplication and two ways of bounding $\omega$ are given. The latter allows to recover Strassen's result from 1969 and his algorithm for multiplying matrices is discussed as well.

Afterwards, the fifth section introduces symmetric rank (also known as Waring rank) and border symmetric rank. To determine the Waring rank of a symmetric tensor part six deals with Apolarity Theory. In particular, the reduced and the scheme-theoretic version of the Apolarity Lemma are presented. This is applied in section seven to several examples, e.g., to determine the symmetric rank of a monomial. In addition, $\omega$ is expressed in terms of the (border) symmetric rank of a family of symmetric tensors.

The last section shows how $\omega$ can be recovered with yet another concept of rank, the asymptotic rank of a tensor. We end with Strassen's Asymptotic Rank Conjecture, its special case for $\omega$ and the introduction of tensor asymptotic rank. Thereafter, the exercises from the fall school are given as an appendix.

The authors do not claim any originality, neither in the presented results, that are all known, nor in the presentation, which is influenced by the vast literature on the subject.

\bigskip

\textbf{Acknowledgements.} The authors wish to thank the organizers of the fall school for the inspiring event and their hospitality, and Peter B\"{u}rgisser for his thorough and helpful revision. The second author would like to thank Miruna-Stefana Sorea for interesting discussions on the exercises.


\section{Group Action on Matrices}\label{pr:secMatrices}

We gently start into the topic \emph{Tensor Rank and Complexity} via this first section. Namely, we will recall classical knowledge from linear algebra on matrix rank and equivalence of matrices. This shall serve as a main motivation for the corresponding generalizations for tensors, which will be treated in the following Sections~\ref{pr:secTensors} and \ref{pr:secRank}.

Let $V$ and $W$ be finite dimensional $\prC$-vector spaces and set $a_1 := \dim V$, $a_2 := \dim W$. The algebraic group $\prGL(V) \times \prGL(W)$ acts naturally on $V \otimes W$ by
	\begin{align*}
	(g,h) \cdot \left( \sum_{i=1}^r v_i \otimes w_i \right) := \sum_{i=1}^r g(v_i) \otimes h(w_i) \, ,
	\end{align*}
where $(g,h) \in \prGL(V) \times \prGL(W)$, $v_i \in V$ and $w_i \in W$. Here we already used the language of tensors for clear comparison with its generalization in Section~\ref{pr:secTensors}. Still, the action is up to identification just the left-right action on matrices. Since we often identify $V \otimes W$ with the matrix space $\prC^{a_1 \times a_2}$ implicitly, let us explain this once. After fixing bases we can assume $V = \prC^{a_1}$ and $W = \prC^{a_2}$. Interpreting elements of $V$ and $W$ as column vectors we have the natural isomorphism
	\begin{align*}
	V \otimes W \to \prC^{a_1 \times a_2}, \quad \sum_{i=1}^r v_i \otimes w_i \; \mapsto \; \sum_{i=1}^r v_i w_i^T \, ,
	\end{align*}
where $(\cdot)^T$ denotes the transposition. Note that non-zero vectors of the form $v \otimes w$ are bijectively identified with matrices of rank one. Furthermore, the $\prGL(V) \times \prGL(W)$ action becomes under this identification
	\begin{align*}
	\big( \prGL_{a_1}(\prC) \times \prGL_{a_2}(\prC) \big) \times \prC^{a_1 \times a_2} \to \prC^{a_1 \times a_2}, \quad (g,h,M) \mapsto gMh^T \, ,
	\end{align*}
i.e., the left-right action on $\prC^{a_1 \times a_2}$. Remember that two matrices are called \emph{equivalent}, if they lie in the same orbit under the left-right action. For matrix equivalence there is the following theorem.

\begin{thm}
The natural $\prGL(V) \times \prGL(W)$ action on $V \otimes W$ has finitely many orbits, which are parametrized by the matrix rank.
\end{thm}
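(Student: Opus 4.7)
The plan is to translate the statement into matrix language via the identification $V \otimes W \cong \prC^{a_1 \times a_2}$ already set up in the excerpt, where the $\prGL(V) \times \prGL(W)$ action becomes the left--right action $(g,h,M) \mapsto g M h^T$ on matrices. Under this dictionary, two tensors lie in the same orbit iff the corresponding matrices are equivalent in the classical sense, so I need to show that matrix equivalence classes are exactly indexed by the rank.

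First I would establish that the matrix rank is a complete invariant, by proving two directions separately. For invariance, I would recall that multiplying on the left (resp.\ right) by an invertible matrix does not change the column span (resp.\ row span), so $\prrk(gMh^T) = \prrk(M)$ for every $(g,h) \in \prGL_{a_1}(\prC) \times \prGL_{a_2}(\prC)$. This already shows that matrices of different rank lie in different orbits, hence the map
\begin{align*}
\{\text{orbits}\} \to \{0, 1, \ldots, \min(a_1,a_2)\}, \quad [M] \mapsto \prrk(M)
\end{align*}
is well-defined and injective.

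For surjectivity and for the fact that the rank determines the orbit, I would bring any matrix $M$ of rank $r$ into the canonical block form
\begin{align*}
J_r := \begin{pmatrix} I_r & 0 \\ 0 & 0 \end{pmatrix} \in \prC^{a_1 \times a_2}
\end{align*}
via elementary row and column operations. Since elementary row (resp.\ column) operations correspond to left (resp.\ right) multiplication by invertible matrices, such a reduction produces $g \in \prGL_{a_1}(\prC)$ and $h \in \prGL_{a_2}(\prC)$ with $gMh^T = J_r$. Concretely, one can obtain this reduction by Gaussian elimination: choose bases of $V$ and $W$ that are adapted to $\ker(M)$ and to a complement of $\prim(M)$, turning $M$ into $J_r$. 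Equivalently, one can exhibit a rank-$r$ decomposition $M = \sum_{i=1}^{r} v_i w_i^T$ with $v_1, \ldots, v_r \in V$ and $w_1, \ldots, w_r \in W$ linearly independent, complete both families to bases, and write down $g$ and $h$ as the corresponding change-of-basis matrices.

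Combining the two steps shows that every orbit contains a unique canonical representative $J_r$, so the orbits are in bijection with $\{0,1,\ldots,\min(a_1,a_2)\}$ and the action has finitely many orbits, parametrized by the rank. There is no real obstacle here: the only content of the argument beyond elementary linear algebra is the correct dictionary between the tensor action and the left--right matrix action, which has already been spelled out in the text preceding the statement.
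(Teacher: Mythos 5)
Your proposal is correct and follows essentially the same route as the paper: observe that rank is invariant under the left--right action, then use Gaussian elimination to bring every matrix into the rank normal form $J_r$, giving exactly one orbit per possible rank. The only cosmetic difference is that you spell out the invariance step via row/column spans and offer an alternative phrasing of the reduction through a rank-$r$ decomposition, but the underlying argument is identical.
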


\begin{proof}
For $(g,h) \in \prGL(V) \times \prGL(W)$ and a matrix $M \in V \otimes W$ it holds that $\prrk(M) = \prrk(gMh^T)$. Thus, any orbit only contains matrices of the same rank. In fact, any matrix $M$ can be transformed by Gaussian elimination from left and right into its \emph{rank normal form}
	\begin{align*}
	\begin{bmatrix}
	I_r & 0 \\ 0 & 0
	\end{bmatrix} \in \prC^{a_1 \times a_2} \cong V \otimes W \, ,
	\end{align*}
where $0 \leq r \leq \min \lbrace a_1, a_2 \rbrace$ is the rank of $M$ and $I_r \in \prC^{r \times r}$ denotes the identity matrix. Hence, for each possible rank there is exactly one orbit.
\end{proof}

From the algebraic geometry perspective this action behaves nicely. Indeed, the Zariski-closure of the orbit for rank $r$ is given by
	\begin{align*}
	X_r := \lbrace M \in V \otimes W \mid \prrk(M) \leq r \rbrace
	\end{align*}
and this algebraic variety is well-understood. It is the vanishing locus of all $(r+1) \times (r+1)$ minors and the singular locus of $X_r$ is given by $X_{r-1}$. Therefore, the orbits $X_r \setminus X_{r-1}$ of the action are all smooth. Moreover, $X_r$ is even a cone. In particular, $X_1$ is the affine cone of the Segre variety $\prP(V) \times \prP(W) \subseteq \prP(V \otimes W)$, because a vector $v \otimes w$ is either zero or corresponds to a matrix of rank one.

In addition, the orbit closures $X_r$ yield an ascending chain of inclusions
	\begin{equation}\label{pr:eqXrMatrixCase}
	\lbrace 0 \rbrace = X_0 \subseteq X_1 \subseteq X_2 \subseteq \ldots \subseteq X_{\min\lbrace a_1, a_2 \rbrace} = V \otimes W \, .
	\end{equation}
The last equality expresses the fact that the matrices of maximal rank are Zariski-dense in $V \otimes W$.

\begin{lem}\label{pr:lemSec1} Let $0 \leq r,s \leq \min \lbrace a_1, a_2 \rbrace$.
	\begin{abclist}
	\item If $A \in X_r$ and $B \in X_s$, then $A + B \in X_{r+s}$.
	\item It holds that $X_r = \big\lbrace \sum_{i=1}^r A_i \mid  A_i \in X_1 \big\rbrace$.
	\end{abclist}
\end{lem}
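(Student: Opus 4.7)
The plan is to prove (a) first by a column-space (image) argument, and then to derive (b) from (a) together with the rank normal form recalled in the preceding theorem.

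For part (a), I would observe that if $A \in X_r$ and $B \in X_s$, viewed as linear maps $W^{\ast} \to V$ under the identification $V \otimes W \cong \prHom(W^{\ast},V)$, then
\begin{align*}
\prim(A+B) \subseteq \prim(A) + \prim(B),
\end{align*}
and the right-hand side is a subspace of $V$ of dimension at most $r+s$. Hence $\prrk(A+B) \leq r+s$, i.e.\ $A+B \in X_{r+s}$. (Equivalently one can argue via column spaces after choosing bases.) This is entirely routine and should not be the obstacle.

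For part (b), the inclusion $\lbrace \sum_{i=1}^r A_i \mid A_i \in X_1 \rbrace \subseteq X_r$ follows from part (a) by induction on $r$, since each summand lies in $X_1$. For the reverse inclusion, I would start from the rank normal form: if $M \in X_r$ has rank exactly $r' \leq r$, then there exist $g \in \prGL(V)$ and $h \in \prGL(W)$ such that
\begin{align*}
M = g \left( \sum_{i=1}^{r'} e_i e_i^T \right) h^T = \sum_{i=1}^{r'} (g e_i)(h e_i)^T,
\end{align*}
where $e_1, \dots, e_{r'}$ denote the first standard basis vectors. Each summand $(g e_i)(h e_i)^T$ is a rank-one matrix and hence lies in $X_1$. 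If $r' < r$, I simply pad with $r - r'$ zero summands (noting $0 \in X_1$) to obtain a decomposition into exactly $r$ elements of $X_1$.

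I do not expect any genuine obstacle here: both statements are classical consequences of the rank normal form together with the subadditivity of rank. The only minor bookkeeping point is remembering that $X_1$ contains the zero matrix, so that a matrix of rank strictly less than $r$ still admits a decomposition into exactly $r$ summands from $X_1$.
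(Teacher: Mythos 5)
Your proof is correct and follows essentially the same route as the paper: part (a) via subadditivity of rank (you prove it directly through the image argument, whereas the paper simply cites $\prrk(M_1+M_2)\leq\prrk(M_1)+\prrk(M_2)$), and part (b) by decomposing the rank normal form into rank-one summands and padding with $0\in X_1$. No gaps.
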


\begin{proof}
Part~a) is a consequence of $\prrk(M_1 + M_2) \leq \prrk(M_1) + \prrk(M_2)$ for matrices $M_i$. For part~b) the inclusion ``$\supseteq$'' is immediate from part~a). Conversely, for any matrix $M \in X_r$ of rank $s \leq r$ there is $(g,h) \in \prGL_{a_1}(\prC) \times \prGL_{a_2}(\prC)$ such that $gMh^T$ is in rank normal form. Hence
	\begin{align*}
	M = g \begin{bmatrix} I_{s} & 0 \\ 0 & 0 \end{bmatrix} h^T
	= \sum_{i=1}^s \underbrace{g_i h_i^T}_{\quad \in X_1} \, ,
	\end{align*}
where $g_i \in \prC^{a_1}$ (respectively $h_i \in \prC^{a_2}$) is the $i$-th column of $g$ (respectively $h$). If $s < r$ we may fill up the sum with zeros, using $0 \in X_1$.
\end{proof}

In particular, any $X_r$ is determined by $X_1$ using part b) of Lemma~\ref{pr:lemSec1}. This will be used as a motivation for the definition of \emph{tensor rank} in Section~\ref{pr:secRank}.



\section{Group Action on Tensors}\label{pr:secTensors}

Now, we generalize the setting from Section~\ref{pr:secMatrices} to tensor products with $d$ factors. For this, let $V_1, \ldots, V_d$ be finite dimensional $\prC$-vector spaces and set $a_i := \dim V_i$ for $i =1,\ldots,d$. The algebraic group $G := \prGL(V_1) \times \cdots \times \prGL(V_d)$ acts naturally on $V:= V_1 \otimes \cdots \otimes V_d$ via the linear maps $g_1 \otimes \cdots \otimes g_d$, i.e.,
\begin{align*}
(g_1,\ldots,g_d) \cdot \left( \sum_{j=1}^r v_{1,j} \otimes \cdots \otimes v_{d,j} \right) := \sum_{j=1}^r \, g_1 (v_{1,j}) \otimes \cdots \otimes g_d (v_{d,j}) \, ,
\end{align*}
where $v_{i,j} \in V_i$ for $i =1, \ldots, d$. Of course, we may always assume $V_i = \prC^{a_i}$ after fixing a basis on $V_i$.

Given this setting, one is interested in understanding the action of $G$ on $V$ in the new cases $d \geq 3$. In analogy to the matrix case we may call two tensors in $V$ \emph{equivalent}, if they lie in the same $G$-orbit. Unfortunately, the $G$-action, and thus the notion of tensor equivalence, will turn out to be much less well-behaved than in the matrix case $d=2$. To guide ourselves, we ask the following questions for $d \geq 3$:
	\begin{numlist}
	\item Has the natural action of $G$ on $V$ finitely many orbits?
	\item Is there a $G$-orbit, which is Zariski-dense in $V$?
	\item Can one classify the orbits of the natural $G$-action on $V$?
	\end{numlist}
Before we investigate these questions, we give a definition regarding the second question. It is motivated by calling a variety $X$, which is equipped with a transitive action of an algebraic group $H$, a homogeneous space.

\begin{defn}
$V = V_1 \otimes \cdots \otimes V_d$ is called \textbf{prehomogeneous} if the natural action of $G$ on $V$ has a Zariski-dense orbit. Hence, the second question asks if $V$ is prehomogeneous.
\end{defn}

Now, let us consider the three questions from above. Starting with the bad news, in general the answer to the third question is ``No, this is hopeless''; and there is a mathematical reasoning for that! Namely, in the language of representation theory there are the so-called \textit{wild problems}. These wild problems refrain themselves from classification, since they contain for \emph{all}(!) $m \geq 2$ the problem of classifying $m$-tuples of matrices up to simultaneous similarity. In our situation, \cite[Theorem~4.5]{prBSwildProblem} states that already the classification of the $G$-orbits for $\prC^3 \otimes \prC^m \otimes \prC^n$ is a wild problem. Using this, one can deduce that most instances of classifying $d$-tensors for $d \geq 3$ are wild. An interested reader may consult the article \cite{prBSwildProblem} and the references therein for further details.

Although the classification problem is in general out of reach, there are complete classifications for certain $V$. The easiest case $\prC^2 \otimes \prC^2 \otimes \prC^2$ is presented in Section~\ref{pr:secRank}. Moreover, one can always decide the first two questions and we dedicate the subsequent part of this section to these two questions. A full answer to question~1 respectively question~2 will be given in Theorem~\ref{pr:thmFiniteOrbits} respectively Theorem~\ref{pr:thmSatoKimuraVenturelli} below.

We start the study with some considerations, that are useful for both question~1 and question~2. Note that if $V$ has finitely many $G$-orbits, then there is always a Zariski-dense orbit, i.e., $V$ is prehomogeneous. This is due to the stratification of $V$ by the $G$-orbits and due to $V$ being irreducible. Conversely, we will see that not all prehomogeneous $V$ have finitely many orbits. Hence, question~1 is more restrictive.

\begin{remark}
The dimension formula of an orbit $G \cdot v$ ($v \in V$) provides a useful necessary condition for prehomogeneity, and hence in particular for having finitely many orbits.

To see this, let $V = \prC^{a_1} \otimes \cdots \otimes \prC^{a_d}$ and choose $v \in V$. The stabilizer $G_v$ of $v$ contains by definition of the $G$-action the $d-1$ dimensional torus
	\begin{align*}
	T:= \big\lbrace (\lambda_1 I_{a_1}, \ldots, \lambda_d I_{a_d}) \mid \lambda_i \in \prC^\times \, , \; \lambda_1 \cdots \lambda_d = 1 \big\rbrace \subseteq G \, ,
	\end{align*}
where $I_m \in \prC^{m \times m}$ denotes the identity matrix. Thus, the dimension formula for the orbit $G \cdot v$ yields
	\begin{align*}
	\dim G \cdot v = \dim G - \dim G_v \leq \dim G - \dim T = \sum_{i=1}^d a_i^2 - d + 1 \, .
	\end{align*}
Comparing this with the dimension of $V$, we see that whenever
	\begin{align*}
	N(a_1, \ldots, a_d) := (\dim G - \dim T) - \dim V = 1-d + \sum_{i=1}^d a_i^2 - \prod_{i=1}^d a_i < 0 \, ,
	\end{align*}
$V$ cannot be prehomogeneous as $\dim G \cdot v < \dim V$ for any $v \in V$. In particular, $V$ has infinitely many orbits if $N(a_1,\ldots,a_d) < 0$.

On the other hand, $N(a_1,\ldots,a_d) \geq 0$ does not necessarily imply prehomogeneity as we will see below, compare Theorem~\ref{pr:Thm2kk}.
\end{remark}

\begin{ex}\label{pr:exNcomputation}
Direct computation for $n \in \prN$ gives
	\begin{numlist}
	\item $N(2,2,n) = (n-2)^2 + 2$
	\item $N(2,3,n) = (n-3)^2 + 2$
	\item $N(2,n,n+1) = 3$
	\item $N(2,n,n) = 2$
	\item $N(n,\ldots,n) = 1-d + dn^2 - n^d$, in particular, $V = (\prC^n)^{\otimes d}$ is not prehomogenous if $n$ and $d$ are at least three.
	\end{numlist}
\end{ex}

Let us now state the solution for the first question. Of course, if
	\begin{align*}
	\alpha := \big\vert \lbrace i \mid a_i > 1 \rbrace   \big\vert \leq 2
	\end{align*}
then, as a consequence of the matrix case from Section~\ref{pr:secMatrices}, $V$ has finitely many orbits. Besides this, only the first two types of tuples from Example~\ref{pr:exNcomputation} admit finitely many orbits:

\begin{thm}[\cite{prKacNilpotentOrbits}]\label{pr:thmFiniteOrbits}
Let $d \geq 3$ and $2 \leq a_1 \leq \ldots \leq a_d$. The natural action of $G$ on $V$ has only in the following cases finitely many orbits:
	\begin{numlist}
	\item $d=3$ and $(a_1,a_2,a_3) = (2,2,n)$ for some $n \geq 2$.
	\item $d=3$ and $(a_1,a_2,a_3) = (2,3,n)$ for some $n \geq 3$.
	\end{numlist}
\end{thm}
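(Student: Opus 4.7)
The plan is to treat the two implications separately: first, verify that the listed families $(2,2,n)$ and $(2,3,n)$ for $d=3$ yield only finitely many orbits; second, produce continuous moduli in every other tuple with $d \geq 3$ and $a_1 \geq 2$.

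For the positive direction I would identify $\prC^2 \otimes \prC^a \otimes \prC^b$ with the space of pencils $\lambda A + \mu B$ of $a \times b$ matrices, where $[\lambda:\mu] \in \prP(\prC^2)$. The $G$-action then becomes strict equivalence of pencils under $\prGL_a \times \prGL_b$ combined with a Möbius reparametrization of $\prP^1$ coming from $\prGL_2$. Kronecker's canonical form decomposes any such pencil uniquely as a direct sum of singular blocks $L_k$, $L_k^T$ and regular Jordan blocks labelled by eigenvalues in $\prP^1$. When $a \in \{2,3\}$ the regular part has size at most $a \times a$ and hence carries at most $a \leq 3$ eigenvalues, which can be normalized to prescribed positions by the sharply $3$-transitive action of $\prGL_2$ on $\prP^1$. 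All remaining data (the block pattern and Jordan sizes) is combinatorial, so only finitely many orbits arise.

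For the negative direction, the dimension count of the preceding remark already dispatches every tuple with $N(a_1,\ldots,a_d) < 0$: each orbit then has dimension strictly less than $\dim V$, and since $V$ is irreducible and positive-dimensional it cannot be a finite union of proper subvarieties, forcing infinitely many orbits. The remaining exceptional tuples have $N \geq 0$ and split into $(2,a,b)$ with $a \geq 4$, triples $(a_1,a_2,a_3)$ with $a_1 \geq 3$, and all cases $d \geq 4$. For $(2,a,b)$ with $a \geq 4$ the pencil picture still applies: the cross-ratio of four distinct regular eigenvalues in $\prP^1$ is a continuous $G$-invariant, producing a one-parameter family of inequivalent tensors.

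The main obstacle lies in the borderline cases where the pencil trick is unavailable but $N$ is still non-negative, such as $(3,3,n)$ for $n \geq 7$ or $(2,2,2,n)$ for $n \geq 7$. A uniform treatment goes through the theory of $\Theta$-groups: a tensor product representation of $\prGL(V_1) \times \cdots \times \prGL(V_d)$ appears as the isotypic piece of a $\prZ$-grading on a semisimple Lie algebra, and the Vinberg--Kac classification pins down exactly which gradings admit finitely many orbits in that piece. Only the weighted Dynkin diagrams corresponding to $(2,2,n)$ and $(2,3,n)$ survive the resulting combinatorial check, and at this point I would invoke the cited result of Kac to close the argument.
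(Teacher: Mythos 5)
The paper does not give a proof of this theorem at all; it simply cites \cite[Proposition 30]{prManivelLectures}. Your proposal therefore takes a genuinely different route in that it attempts a partially self-contained argument, and the two implications deserve separate comments.

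For the forward implication, your Kronecker-pencil argument is correct and is essentially the standard constructive proof for these two families. A $2\times a \times b$ tensor is a pencil of $a\times b$ matrices; the $\prGL_a\times\prGL_b$ part acts by strict equivalence, which is classified by Kronecker's canonical form (singular blocks $L_k$, $L_k^T$ and a regular Jordan part), and the $\prGL_2$ factor acts on the eigenvalue parameter $\prP^1$ by M\"obius transformations. The regular part of an $a\times b$ pencil is at most $a\times a$, hence carries at most $a$ eigenvalues, and sharp $3$-transitivity of $\prGL_2$ on $\prP^1$ pins them down once $a\le 3$. This is more informative than the paper's bare citation, since it actually exhibits the finitely many normal forms.

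For the converse implication, however, there is a genuine gap. Your dimension count handles all tuples with $N(a_1,\ldots,a_d)<0$, and the cross-ratio invariant correctly produces continuous moduli in every tuple of the form $(2,a,b)$ with $a\ge 4$ (place a regular $4\times 4$ subpencil with four distinct eigenvalues; the $j$-invariant of the cross-ratio is then a non-constant $G$-invariant). But you are left with the tuples where $N\ge 0$, $d=3$ and $a_1\ge 3$ (for instance $(3,3,n)$ for $n\ge 7$), together with the $d\ge 4$ tuples with $N\ge 0$ that do not reduce, by contracting two factors into one, to a triple $(2,a,b)$ with $a\ge 4$ --- for example $(3,3,3,n)$ with $n$ large; any such contraction lands you back in a $(3,*,*)$ triple with $N\ge 0$. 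For exactly these cases you say you would ``invoke the cited result of Kac,'' but Kac's classification of $\Theta$-group representations with finitely many orbits \emph{is} the theorem under discussion, specialized to tensor products. Invoking it here is therefore not a reduction but a restatement of the statement you are trying to prove, and without actually carrying out the Vinberg--Kac combinatorial check for the residual tuples, the converse implication is not established. In fairness, the paper makes no attempt at this either; but as written, your proposal leaves the hardest part of the negative direction open, and the phrasing ``only the weighted Dynkin diagrams corresponding to $(2,2,n)$ and $(2,3,n)$ survive the resulting combinatorial check'' asserts, rather than proves, the needed conclusion.
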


\begin{proof}
For a proof we refer to \cite[Proposition~30]{prManivelLectures}.
\end{proof}

Although $V$ has finitely many orbits in the mentioned cases, the orbit structure is differently behaved compared to the matrix case. This is already witnessed by the simplest possible case $\prC^2 \otimes \prC^2 \otimes \prC^2$.

\begin{ex}
For $V = \prC^2 \otimes \prC^2 \otimes \prC^2$ there are seven orbits (including the zero orbit) and the containment graph for their orbit closures is given in Figure~\ref{pr:figAffineOrbitsC2}.  (See Section~\ref{pr:secDim2} for the proof.) This containment graph differs from the matrix case. Indeed, it is not just one ascending chain of inclusions as in equation~\eqref{pr:eqXrMatrixCase}.
\begin{figure}[ht]
  \centering
  \begin{tikzcd}[column sep = small]
  & \,V \ar[d, dash] &  \\ 
  & \bullet \ar[d, dash] \ar[ld, dash] \ar[rd, dash] & \\ 
  \bullet \ar[rd, dash] & \bullet \ar[d, dash] & \bullet \ar[ld, dash] \\
  & \bullet \ar[d, dash] & \\
  & \lbrace 0 \rbrace &
  \end{tikzcd}
  \caption{Containment Graph for orbit closures of $V = \prC^2 \otimes \prC^2 \otimes \prC^2$}
  \label{pr:figAffineOrbitsC2}
\end{figure}
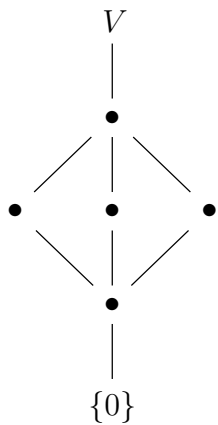

Moreover, we will see in Section~\ref{pr:secRank} that the generalization of matrix rank for tensors is not enough to distinguish between the orbits of $\prC^2 \otimes \prC^2 \otimes \prC^2$, compare Table~\ref{pr:tabRepresentativesC2}. For further illustrations of Theorem~\ref{pr:thmFiniteOrbits} we refer to \cite{prParfenovFiniteNumber} and \cite[Section~10.3]{prLandsbergBook}.
\end{ex}

As a consequence of Theorem~\ref{pr:thmFiniteOrbits} the tuples $(2,n,n)$ and $(2,n,n+1)$ do not give rise to $V$ with finitely many orbits if $n \geq 4$. Still, $\prC^2 \otimes \prC^n \otimes \prC^{n+1}$ is always prehomogeneous by the next theorem. Thus, $(2,n,n+1)$ for $n \geq 4$ provides a full family of prehomogeneous $V$ with \emph{infinitely} many orbits.

\begin{thm}
For $n \in \prN$ the tensor product $\prC^2 \otimes \prC^n \otimes \prC^{n+1}$ is prehomogeneous.
\end{thm}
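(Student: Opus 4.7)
My approach is to recognize $V = \prC^2 \otimes \prC^n \otimes \prC^{n+1}$ as a space of matrix pencils and exhibit an explicit ``Kronecker block'' whose orbit is already dense under a proper subgroup of $G$. I would identify $V$ with the space of ordered pairs $(A,B)$ of $n \times (n+1)$ matrices via $e_1 \otimes A + e_2 \otimes B \leftrightarrow (A,B)$, equivalently with pencils $xA + yB$. Under this identification the subgroup $H := \prGL_n \times \prGL_{n+1}$ of $G$ acts by simultaneous left--right multiplication $(A,B) \mapsto (g_2 A g_3^T,\, g_2 B g_3^T)$, and the remaining $\prGL_2$ factor acts by taking linear combinations of the pair. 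As a candidate point I single out $v_0 = (A_0, B_0)$ with $A_0 = [I_n \mid 0]$ and $B_0 = [0 \mid I_n]$, whose associated pencil is the bidiagonal Kronecker block
\[
L_n \;=\; xA_0 + yB_0 \;=\; \begin{pmatrix} x & y & & \\ & x & y & \\ & & \ddots & \ddots \\ & & & x & y \end{pmatrix}.
\]

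The plan is to prove that the $H$-orbit of $v_0$ alone is Zariski-dense in $V$; prehomogeneity under the full $G$-action then follows for free. Since $\dim H = n^2 + (n+1)^2 = 2n(n+1) + 1 = \dim V + 1$, it suffices to show that the stabilizer of $v_0$ in $H$ is one-dimensional. I would write $g_3 \in \prGL_{n+1}$ in block form $g_3 = \bigl(\begin{smallmatrix} P & h \\ k^T & \gamma \end{smallmatrix}\bigr)$ with $P \in \prC^{n \times n}$, $h,k \in \prC^n$, $\gamma \in \prC^\times$. The equation $g_2 A_0 g_3^T = A_0$ immediately forces $k = 0$ and $g_2 = P^{-T}$ (in particular, $P$ is invertible). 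Substituting this into $g_2 B_0 g_3^T = B_0$ rewrites it as $B_0 g_3^T = P^T B_0$; reading off entries gives three sets of conditions on $P$: vanishing of the first row of $P$ off its $(1,1)$ slot, vanishing of the last row of $P$ off its $(n,n)$ slot, and the Toeplitz-type shift relation $P_{j,k} = P_{j-1,k-1}$ for $j,k \geq 2$.

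The shift relation forces $P$ to be constant along each diagonal, and then the boundary vanishing propagates along diagonals to kill every off-diagonal entry. The common diagonal value is an arbitrary $a \in \prC^\times$, so $P = aI_n$, $h = 0$, $\gamma = a$, and $g_2 = a^{-1} I_n$. Thus the $H$-stabilizer is the one-parameter subgroup $\{(a^{-1} I_n,\, a I_{n+1}) : a \in \prC^\times\}$, hence the $H$-orbit of $v_0$ has dimension $\dim V$ and is Zariski-dense. As a consistency check, the full $G$-stabilizer will then have dimension $1 + 4 = 5$, matching $N(2,n,n+1) + (d-1) = 3 + 2 = 5$ predicted by Example~\ref{pr:exNcomputation}.

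The main obstacle is the entry-by-entry unpacking of $B_0 g_3^T = P^T B_0$ and the inductive propagation of zeros from the boundary rows of $P$ into every off-diagonal entry of $P$; this is elementary linear algebra but requires careful bookkeeping. The payoff is that a single Kronecker block $L_n$ witnesses prehomogeneity in the cleanest possible way, making transparent why one extra column suffices to make $V$ prehomogeneous and avoiding any appeal to the full Kronecker classification of matrix pencils.
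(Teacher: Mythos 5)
Your argument is correct and, usefully, fills in a genuine gap left by the paper, which proves this theorem only by citing \cite[Proposition~9.4]{prHarris} with no argument given. The entry-by-entry computation is sound: $g_2 A_0 g_3^T = A_0$ forces $k=0$, $g_2 = P^{-T}$ and the invertibility of $P$; then equating $B_0 g_3^T$ with $P^T B_0$ entry by entry yields exactly the three constraint families you list (vanishing of the first row of $P$ off $(1,1)$, vanishing of the last row off $(n,n)$, and the shift relation $P_{j,k}=P_{j-1,k-1}$ for $j,k\geq 2$), together with $h=0$ and $\gamma = P_{n,n}$, so that $P = aI_n$ and the $H$-stabilizer is indeed the one-parameter subgroup $\{(a^{-1}I_n,\, aI_{n+1}) : a \in \prC^\times\}$. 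Since $\dim H = n^2 + (n+1)^2 = 2n(n+1)+1 = \dim V + 1$, the $H$-orbit of the Kronecker block $v_0$ is a constructible subset of full dimension in the irreducible affine space $V$, hence Zariski-dense, and a fortiori the $G$-orbit is dense. What your approach buys over the paper's bare citation: an explicit generic tensor, the stronger observation that prehomogeneity is already witnessed by the proper subgroup $\prGL_n \times \prGL_{n+1}$, a transparent identification of the isotropy group, and a clear explanation of why the single extra column makes room for a dense orbit (contrast with the $\prC^2\otimes\prC^n\otimes\prC^n$ case of Theorem~\ref{pr:Thm2kk}). The consistency check $\dim G_{v_0} = N(2,n,n+1)+(d-1) = 5$ also matches.
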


\begin{proof}
It follows from \cite[Proposition~9.4]{prHarris}.
\end{proof}

Also the following result for $(2,n,n)$ is of interest. Namely, it gives a family of examples showing that $N(a_1,\ldots,a_d) \geq 0$ may not be enough for being prehomogeneous. (Recall $N(2,n,n)=2$ from Example~\ref{pr:exNcomputation}.)

\begin{thm}\label{pr:Thm2kk}
$\prC^2 \otimes \prC^n \otimes \prC^{n}$ is prehomogeneous if and only if $n \leq 3$.
\end{thm}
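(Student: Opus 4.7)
The plan is to identify tensors in $\prC^2 \otimes \prC^n \otimes \prC^n$ with pencils of $n \times n$ matrices, i.e.\ with pairs $(A,B) \in \prC^{n \times n} \times \prC^{n \times n}$ corresponding to $e_1 \otimes A + e_2 \otimes B$. Under this identification the $\prGL_n \times \prGL_n$-factor of $G$ acts by $(g,h) \cdot (A,B) = (gAh^T, gBh^T)$, while the $\prGL_2$-factor replaces $(A,B)$ by a linear change of basis of the pencil.

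For the direction ``$n \leq 3$'' I would simply invoke Theorem~\ref{pr:thmFiniteOrbits}: the case $n=2$ falls under item~(i) and the case $n=3$ under item~(ii), so there are only finitely many $G$-orbits, which forces the existence of a Zariski-dense one as noted before Example~\ref{pr:exNcomputation}.

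For the direction ``$n \geq 4$'' the strategy is to exhibit a non-constant $G$-invariant rational function on $V$, since existence of a dense orbit is equivalent to triviality of the field of $G$-invariants. I would consider the rational map $\phi \colon V \dashrightarrow \prP(\prSym^n (\prC^2)^*)$ sending $(A,B)$ to the class of the binary form
	\begin{align*}
	f_{(A,B)}(\lambda,\mu) \;:=\; \det(\lambda A + \mu B) \, .
	\end{align*}
A short computation shows that $f_{(A,B)}$ is merely rescaled by $\det(g)\det(h)$ under $\prGL_n \times \prGL_n$, whereas it transforms via the standard action on binary $n$-ics under the $\prGL_2$-factor. Hence $\phi$ is $G$-equivariant, with $G$ acting on the target only through $\operatorname{PGL}_2$.

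Since $\dim \prP(\prSym^n \prC^2) = n$ and $\operatorname{PGL}_2$ is $3$-dimensional, for $n \geq 4$ the field of $\operatorname{PGL}_2$-invariant rational functions on $\prP(\prSym^n \prC^2)$ has positive transcendence degree $n-3$: concretely, for generic $(A,B)$ the form $f_{(A,B)}$ has $n \geq 4$ distinct roots on $\prP^1$ and the cross-ratio of any four of them is a non-trivial $\operatorname{PGL}_2$-invariant. Pulling such an invariant back along $\phi$ produces a non-constant $G$-invariant rational function on $V$, incompatible with prehomogeneity. I expect the main subtlety to be verifying that $\phi$ is dominant, so that the pull-back of a non-constant function remains non-constant; but this is a short check, since taking $B = I_n$ and $A$ a companion matrix realises any prescribed binary form of degree $n$ as $\det(\lambda A + \mu B)$.
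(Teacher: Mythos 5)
Your argument is correct in substance and, since the paper simply cites Venturelli's Proposition~1.9, you are supplying a genuine proof where the paper gives a pointer. The pencil-determinant map $\phi(A,B) = [\det(\lambda A + \mu B)]$ is exactly the right $G$-equivariant object: the $\prGL_n \times \prGL_n$ factor only rescales the form, and the $\prGL_2$ factor acts through $\operatorname{PGL}_2$ on the target, so invariants of binary $n$-ics pull back to $G$-invariant rational functions on $V$. The dominance check via companion matrices is correct (one hits every binary form whose $\lambda^n$-coefficient is nonzero, a dense open subset). The direction you use of the equivalence between prehomogeneity and triviality of the invariant field is the easy one (a non-constant invariant rational function is constant on any orbit and hence on its closure), so no appeal to Rosenlicht is needed.

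Two small points worth tightening. First, the theorem allows $n=1$, which Theorem~\ref{pr:thmFiniteOrbits} does not cover since it assumes all $a_i \geq 2$; for $n=1$ one should instead observe that $\prC^2 \otimes \prC \otimes \prC \cong \prC^2$ and the $\prGL_2$ factor alone acts with a dense orbit, or invoke the remark that $\alpha \leq 2$ already forces finitely many orbits. Second, ``the cross-ratio of any four of the $n$ roots'' is not quite a well-defined rational function on $\prP(\prSym^n \prC^2)$, since the roots carry no intrinsic ordering; you should either pass to a symmetric function of cross-ratios (e.g.\ the $j$-invariant for $n=4$), or, more cleanly, note that the generic stabilizer in $\operatorname{PGL}_2$ of $n \geq 4$ distinct points of $\prP^1$ is trivial, so generic orbits have dimension $3 < n = \dim \prP(\prSym^n \prC^2)$ and the invariant field has transcendence degree $n-3 > 0$. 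With these repairs the argument stands; as a bonus, the same pencil-determinant map underlies the Kronecker classification of matrix pencils and makes the appearance of the boundary $n=3$ (where $\dim\operatorname{PGL}_2 = 3$ exactly matches $\dim\prP(\prSym^n\prC^2)$) transparent in a way that a reference to the Sato--Kimura/Venturelli tables does not.
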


\begin{proof}
We refer to \cite[Theorem~23]{prManivelLectures}. 
\end{proof}

Let us turn to the general case $\prC^{a_1} \otimes \cdots \otimes \prC^{a_d}$.

\begin{remark}
If $a_1 \geq \prod_{i=2}^d a_i$, then $\prC^{a_1} \otimes \cdots \otimes \prC^{a_d}$ is prehomogeneous.
\end{remark}

\begin{proof}
Set $V := \prC^{a_1}$, $W := \prC^{a_2} \otimes \cdots \otimes \prC^{a_d}$ and $H := \prGL_{a_2}(\prC) \times \cdots \times \prGL_{a_d}(\prC)$. In the following we view $V \otimes W$ as a matrix space. Note that the action of $\prGL_{a_1}(\prC) \times H$ on $V \otimes W$ is in general \emph{not} the action from Section~\ref{pr:secMatrices}, because $H$ is only a subgroup of $\prGL(W)$. Nevertheless, the matrices of full rank $m := a_2 \cdots a_d$ form a Zariski-dense set in $V \otimes W \cong \prC^{a_1 \times m}$, since $a_1 \geq m$. Moreover, any matrix of full rank can be transformed by left multiplication with some $g \in \prGL_{a_1}(\prC)$ (Gau{\ss} algorithm) to
	\begin{align*}
	\begin{bmatrix} I_m \\ 0 \end{bmatrix} \in \prC^{a_1 \times m} \, ,
	\end{align*}
where $I_m$ is the identity matrix. Thus, the full rank matrices are contained in an orbit of the $\prGL_{a_1}(\prC) \times H$ action. (The $H$ action is not needed for the latter.) Actually, the full rank matrices form a $\prGL_{a_1}(\prC) \times H$ orbit, because multiplying with elements of $H$ from the right preserves the rank. Hence, $V \otimes W$ has a dense orbit, i.e., is prehomogeneous.
\end{proof}

Therefore, the interesting case for studying prehomogeneity is when $a_i < \prod_{i \neq j}^d a_j$ holds for all $i =1,\ldots,d$. In this case the following proposition (e.g., with $k = a_d$ and $n = a_1 \cdots a_{d-1}$) is very useful and will lead us to the notion of castling transforms.

\begin{prop}\label{pr:propManivel}
Fix $1 \leq k < n$ and let $V$ be an $n$-dimensional $\prC$-vector space.  Consider a rational representation $\varrho \colon G \to \prGL(V)$ of an algebraic group $G$ and its dual representation $\varrho^{*} \colon G \to \prGL(V^{*})$.  Denote by $(\prGr(k,V), \varrho)$ the induced action of $G$ on the Grassmannian $\prGr(k,V)$ of $k$-planes in $V$. Then the following holds.
	\begin{numlist}
		\item $(\prGr(k,V), \varrho)$ has a Zariski-dense $G$-orbit if and only if $(V \otimes \prC^k, \varrho \otimes \prid_{k})$ has a Zariski-dense $(G \times \prGL_k(\prC))$-orbit.
		
		\item $(V \otimes \prC^k, \varrho \otimes \prid_{k})$ has a Zariski-dense $(G \times \prGL_k(\prC))$-orbit if and only if $(V^* \otimes \prC^{n-k}, \varrho^* \otimes \prid_{n-k})$ has a Zariski-dense $(G \times \prGL_{n-k}(\prC))$-orbit.
	\end{numlist}
\end{prop}

\begin{proof}[Sketch of proof.]
This can be found in \cite[Proposition~28 and proof]{prManivelLectures}  as well as in \cite[§2 Proposition~7 and proof]{prSatoKimura}.  To motivate the upcoming definition of castling transforms we sketch the main ideas of the proof.

To prove item 1,  let $e_1, \ldots,e_k$ be the standard basis of $\prC^k$. The group $G \times \prGL_k(\prC)$ acts on $V \otimes \prC^k \cong V^k$ by $(g,h) \mapsto \varrho(g) \otimes h$ and leaves the Zariski-open and Zariski-dense subset
	\begin{align*}
	U := \left\lbrace \sum_{i=1}^k v_i \otimes e_i \mid v_1,\ldots,v_k \text{ linearly independent}  \right\rbrace
	\end{align*}
invariant. 
We view $\prGr(k,V)$ as a closed subvariety of $\prP \big( \bigwedge^k V \big)$ via the Pl\"ucker embedding and consider
	\begin{align*}
	f \colon U \to \prGr(k,V), \quad \sum_{i=1}^k v_i \otimes e_i \mapsto [v_1 \wedge \ldots \wedge v_k].
	\end{align*}
The map $f$ is algebraic, surjective and equivariant in the sense that $f \big( (g,h)\cdot u \big) = g \cdot f(u)$ for all $(g,h) \in G \times \prGL_k(\prC)$ and all $u \in U$. Moreover, the fibre of a point $W \in \prGr(k,V)$ is given by all bases of $W$ and hence $\prGL_k(\prC)$ acts transitively on each fibre; this action exactly corresponds to a change of bases.

Now, we deduce the first statement from the properties of $f$.  In the following, we always refer to the Zariski topology. First, recall that an orbit under an algebraic group is open in its closure. Hence, a dense orbit is open in the ambient space. That said, if $X$ is a dense $G$-orbit in $\prGr(k,V)$, then $X$ is open in $\prGr(k,V)$ and hence $f^{-1}(X)$ is open in $U$ by continuity.  Since $U$ is open, $f^{-1}(X)$ is open in $V \otimes \prC^k$ as well, and the surjectivity of $f$ yields $f^{-1}(X) \neq \emptyset$. By irreducibility of $V \otimes \prC^k$, $f^{-1}(X)$ is dense in $V \otimes \prC^k$.  Actually, $f^{-1}(X)$ is a $G \times \prGL_k(\prC)$-orbit by the property of the fibres of $f$. This shows one direction of item~1.

On the other hand, if $Y$ is a dense $G \times \prGL_k(\prC)$-orbit in $V \otimes \prC^k$, then $Y$ is open. Thus, the irreducibility of $V \otimes \prC^k$ implies $Y \cap U \neq \emptyset$. Choosing some $y \in Y \cap U$, we have $\big( G \times \prGL_k(\prC) \big) \cdot y = Y \subseteq U$, because $U$ is $G \times \prGL_k(\prC)$-stable. By equivariance of $f$, $f(Y)$ is a  $G$-orbit in $\prGr(k,V)$. If $\overline{Y}^U$ denotes the closure of $Y$ in $U$, we have
	\begin{align*}
	\prGr(k,V) = f(U) = f \left( \overline{Y}^U \right) \subseteq \overline{f(Y)} \subseteq \prGr(k,V) \, ,
	\end{align*}
where we used surjectivity of $f$ in the first equality and continuity of $f$ in the first containment. We have shown $\overline{f(Y)} = \prGr(k,V)$, which ends the proof of item~1. 

Finally, the actions $(\prGr(k,V), \varrho)$ and $(\prGr(n-k,V^*), \varrho^*)$ are naturally isomorphic. Hence, applying item~1 to these two actions yields the second statement.
\end{proof}

\begin{thm}\label{pr:thmCastling}
Let $(a_1,\ldots,a_d) \in \prN^d$ be such that $b_d := \prod_{i=1}^{d-1} a_i - a_d > 0$.  Then $\prC^{a_1} \otimes \cdots \otimes \prC^{a_d}$ is prehomogeneous if and only if $\prC^{a_1} \otimes \cdots \otimes \prC^{a_{d-1}} \otimes \prC^{b_d}$ is prehomogeneous.
\end{thm}

\begin{proof}
Set $n:= \prod_{i=1}^{d-1} a_i$ and $k := a_d$. Then $n > k$ by assumption. Moreover, for $G := \prGL_{a_1}(\prC) \times \cdots \times \prGL_{a_{d-1}}(\prC)$ and $V := \prC^{a_1} \otimes \cdots \otimes \prC^{a_{d-1}}$ let $\varrho$ be the natural representation of $G$ on $V$.  By Proposition~\ref{pr:propManivel}, $(V \otimes \prC^k, \varrho \otimes \prid_{k})$ is prehomogeneous if and only if $(V^* \otimes \prC^{n-k}, \varrho^* \otimes \prid_{n-k})$ is prehomogeneous. On the other hand, given the automorphism
	\begin{align*}
	\varphi \colon G \to G, \quad (g_1,\ldots,g_{d-1}) \mapsto \big( g_1^{-T},\ldots,g_{d-1}^{-T} \big)
	\end{align*}
the representations $( \varrho^* \circ \varphi ) \otimes \prid_{n-k}$ and $\varrho \otimes \prid_{n-k}$ of $G \times \prGL_{n-k}(\prC)$ are isomorphic. Therefore,  $(V^* \otimes \prC^{n-k}, \varrho^* \otimes \prid_{n-k})$ is prehomogeneous if and only if $(V \otimes \prC^{n-k}, \varrho \otimes \prid_{n-k})$ is prehomogeneous, which ends the proof.
\end{proof}

The previous theorem leads us to the definition of a castling transform. Castling transform finds its best motivation as the reflection functor for quiver representation theory introduced by Bernstein, Gelfand and Ponomarev in \cite{prBernstein1973coxeter}. It was
studied also in \cite{prSatoKimura} for classifying prehomogeneous spaces. Our presentation mainly follows \cite{prVenturelli} and \cite{prManivelLectures} and avoids the language of quivers.

\begin{defn}
Let $(a_1,\ldots, a_d), (b_1, \ldots, b_d) \in \prN^d$ and denote the symmetric group of $\lbrace 1,\ldots, d\rbrace$ by $\mathfrak{S}_d$.
	\begin{numlist}
	\item We say $(b_1, \ldots, b_d)$ is a \textbf{castling transform} of $(a_1,\ldots,a_d)$, if there exists $\sigma \in \mathfrak{S}_d$ such that
	\begin{align*}
	b_d = \prod_{i=1}^{d-1} a_{\sigma(i)} - a_{\sigma(d)} \quad \text{and} \quad b_j = a_{\sigma(j)}
	\end{align*}
	for $1\leq j \leq d-1$.
	\item The tuples $(a_1,\ldots, a_d)$ and $(b_1, \ldots, b_d)$ are said to be \textbf{castling equivalent}, in symbols $(a_1,\ldots, a_d) \sim (b_1, \ldots, b_d)$, if $(b_1, \ldots, b_d)$ results from permuting the $a_i$ or $(b_1, \ldots, b_d)$ results from $(a_1,\ldots, a_d)$ by applying a finite number of castling transformations to $(a_1,\ldots, a_d)$.
	\end{numlist}
\end{defn}

\begin{remark}\label{pr:remCastlingEquiv}
We note the following about castling transformations.
	\begin{numlist}
		\item The castling equivalence is indeed an equivalence relation on $\prN^d$.  By definition,  $(a_1,\ldots,a_d) \sim (a_{\sigma(1)}, \ldots, a_{\sigma(d)})$ for all $\sigma \in \mathfrak{S}_d$. Thus, we may always permute the entries, e.g., to obtain non-decreasing order,  without leaving the castling equivalence class.
		
		\item As a consequence of Theorem~\ref{pr:thmCastling}, castling equivalence preserves prehomogeneity. That is, if $(a_1,\ldots,a_d) \sim (b_1,\ldots,b_d)$ then $\prC^{a_1} \otimes \cdots \otimes \prC^{a_d}$ is prehomogeneous if and only if $\prC^{b_1} \otimes \cdots \otimes \prC^{b_d}$ is prehomogeneous.
		
		\item In \cite[Theorem~1.2]{prBryan2018existence}  the dimension of the generic orbit for any tensor format $(a_1,\ldots, a_d)$ is computed with a tricky arithmetic function.
		
		\item Castling transforms are also of interest in invariant theory and, recently, in algebraic statistics. For the natural action of $\prSL_{a_1}(\prC) \times \cdots \times \prSL_{a_d}(\prC)$ on $\prC^{a_1} \otimes \cdots \otimes \prC^{a_d}$ castling transforms preserve stability notions from invariant theory. This fact is -- for different stability notions -- (implicitly) contained in \cite{prElashvili72, prPopov70, prSatoKimura} and it is made explicit for all notions in \cite{prDMW20}.  Moreover, the authors of \cite{prDMW20} crucially use that castling transforms preserve stability notions, together with relations between invariant theory and algebraic statistics (\cite{prAKRS20}), to determine the maximum likelihood thresholds of tensor normal models; see \cite[Theorem~1.1]{prDMW20}. 
	\end{numlist}
\end{remark} 

In addition to preserving prehomogeneity, castling equivalence is suited for the study of prehomogeneous spaces thanks to the following two lemmas. 

\begin{lem}
$N(a_1,\ldots,a_d)$ is invariant under castling equivalence.
\end{lem}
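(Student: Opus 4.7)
My plan is to reduce the claim to a single elementary computation. By Remark~\ref{pr:remCastlinEquiv}, any Castling transformation can be obtained by composing a permutation (which clearly leaves $N$ invariant, since $N$ is symmetric in its arguments) with the particular Castling transform that fixes the first $d-1$ coordinates and replaces the last one. So it suffices to show $N(a_1,\ldots,a_d) = N(a_1,\ldots,a_{d-1},b_d)$ where $b_d := a_1 \cdots a_{d-1} - a_d$. This reduction is the one conceptual step; everything after it is algebra.

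Next I would set $P := \prod_{i=1}^{d-1} a_i$, so that $b_d = P - a_d$, and write out the difference
\begin{align*}
N(a_1,\ldots,a_{d-1},b_d) - N(a_1,\ldots,a_d) &= (b_d^2 - a_d^2) - (P \cdot b_d - P \cdot a_d) \\
&= (b_d - a_d)(b_d + a_d) - P(b_d - a_d) \\
&= (b_d - a_d)\bigl( (b_d + a_d) - P \bigr).
\end{align*}
The terms $1-d$ and $\sum_{i=1}^{d-1} a_i^2$ cancel, and the product term $\prod_{i=1}^d a_i = P \cdot a_d$ combines with $P \cdot b_d = P \cdot (P - a_d)$; grouping as above is the cleanest way to see this.

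The final step is to observe that the second factor $b_d + a_d - P$ is $(P - a_d) + a_d - P = 0$, so the entire difference vanishes, as desired. I do not anticipate any genuine obstacle here: the identity is an immediate polynomial cancellation, and the only thing that needs to be flagged is the appeal to Remark~\ref{pr:remCastlinEquiv} to strip out the permutation $\sigma$ at the start.
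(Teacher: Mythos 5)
Your proof is correct and takes essentially the same route as the paper: reduce to a single Castling transform fixing the first $d-1$ coordinates (invoking permutation-invariance of $N$), then verify by direct computation. Your factoring of the difference as $(b_d - a_d)(b_d + a_d - P)$ is a slightly tidier way to present the cancellation than the paper's term-by-term expansion, but the underlying argument is the same.
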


\begin{proof}
Let $(a_1,\ldots, a_d) \in \prN^d$.  Clearly, $N(a_1,\ldots,a_d) = N(a_{\sigma(1)},\ldots,a_{\sigma(d)})$ holds for all $\sigma \in \mathfrak{S}_d$ by commutativity of addition and multiplication.  Thus, it is enough to show $N(a_1,\ldots,a_d) = N(b_1,\ldots,b_d)$ in the case that $b_i = a_i$ for $i < d$ and $b_d = a_1 a_2 \cdots a_{d-1} - a_d > 0$. We compute
	\begin{align*}
	N(b_1,\ldots,b_d) &= 1-d + \left[ \sum_{i=1}^d b_i^2 \right] - \left[ \prod_{i=1}^d b_i \right] \\
	&= 1-d + \left[ \sum_{i=1}^{d-1} a_i^2 + \prod_{i=1}^{d-1} a_i^2 - 2 \prod_{i=1}^d a_i + a_d^2 \right] - \left[ \prod_{i=1}^{d-1} a_i^2 - \prod_{i=1}^d a_i \right] \\
	&= 1-d + \sum_{i=1}^d a_i^2 - \prod_{i=1}^d a_i = N(a_1,\ldots,a_d)
	\end{align*}
as required.
\end{proof}

\begin{lem}\label{pr:lemMinCastling}
Given $(a_1,\ldots,a_d) \in \prN^d$ there is at most one castling transform $(b_1,\ldots,b_d)$, up to permuting the $b_i$, such that $a_1 \cdots a_d > b_1 \cdots b_d$.
\end{lem}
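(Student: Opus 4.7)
The plan is to (i) observe that, up to permutation, a Castling transform of $(a_1,\ldots,a_d)$ is specified by a single index $j := \sigma(d)$, (ii) translate ``the product strictly decreases'' into a clean numerical inequality in $a_j$, and then (iii) show that this inequality combined with $(b_1,\ldots,b_d) \in \prN^d$ can be met, up to permutation, by only one such $j$.

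Write $A_j := \prod_{i \neq j} a_i$ and $P := \prod_i a_i$. The unordered tuple produced by the Castling transform at $j$ consists of the $a_i$ with $i \neq j$ together with the new entry $A_j - a_j$; in particular this multiset depends only on $j$ and not on the rest of $\sigma$. A direct computation gives
\[
\prod_{i=1}^d b_i \;=\; A_j (A_j - a_j), \qquad P - \prod_{i=1}^d b_i \;=\; A_j (2 a_j - A_j),
\]
so, since $A_j > 0$, the condition $\prod_i b_i < P$ is equivalent to the clean inequality $A_j < 2 a_j$.

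Assume now two distinct indices $j_1 \neq j_2$ both satisfy $A_{j_k} < 2 a_{j_k}$ and both produce a tuple in $\prN^d$; the latter forces $A_{j_k} \geq a_{j_k}$. Multiplying the two strict upper bounds and using $A_{j_k} = P/a_{j_k}$ yields $\prod_{i \notin \{j_1,j_2\}} a_i < 2$; since this is a positive integer it must equal $1$, so $a_i = 1$ for every $i \notin \{j_1,j_2\}$ (vacuously if $d=2$). Substituting back gives $A_{j_1} = a_{j_2}$ and $A_{j_2} = a_{j_1}$, and the validity bounds $A_{j_k} \geq a_{j_k}$ then force $a_{j_1} = a_{j_2}$.

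To finish, a short symmetry check closes the argument: whenever $a_{j_1} = a_{j_2}$, the Castling transforms at $j_1$ and $j_2$ produce identical multisets, since removing $a_{j_1}$ versus $a_{j_2}$ leaves the same multiset of remaining entries and the new entry $A_{j_k} - a_{j_k}$ also coincides. Hence any two admissible indices give the same tuple up to permutation, as required. The point I expect to require most care is the interplay between the strict-decrease inequality $A_j < 2 a_j$ and the non-negativity $A_j \geq a_j$ enforced by $(b_1,\ldots,b_d) \in \prN^d$: dropping the latter would allow Castling transforms with negative entries that have a strictly smaller product yet differ from the unique valid one, and the uniqueness claim would fail.
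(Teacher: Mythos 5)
Your argument is correct and mirrors the paper's proof: both rewrite the strict-decrease condition as $A_j < 2a_j$ (in the paper's notation, $u a_2 < 2a_1$ with $u := \prod_{i=3}^d a_i$), multiply the two inequalities for $j_1 \neq j_2$ to force $\prod_{i \notin \{j_1,j_2\}} a_i < 2$, hence $=1$, and then exploit nonnegativity of the new entry to pin down $a_{j_1} = a_{j_2}$. The only difference is cosmetic and at the endgame: the paper declares the resulting zero entry a contradiction (implicitly using that $0 \notin \prN$), whereas you observe that the two candidate transforms then coincide up to permutation, which closes the argument independently of that convention.
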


\begin{proof}
After eventually permuting the $a_i$, we consider for
	\begin{align*}
	u := \prod_{i=3}^{d} a_i, \quad a_1':= ua_2 - a_1, \quad a_2':= u a_1 -a_2 
	\end{align*}
the castling transforms $(a_1',a_2,a_3,\ldots,a_{d}), (a_1,a_2',a_3,\ldots,a_d) \in \prN^d$ of $(a_1,\ldots,a_d)$ and assume
	\begin{equation}\label{pr:eqMinCastling}
	a'_1 = ua_2-a_1<a_1 \quad \text{ and } \quad a'_2 = ua_1-a_2< a_2 \, ,
	\end{equation}
which means $u a_2 < 2a_1$ and $u a_1 < 2 a_2$ respectively.
Therefore, we get $u^2 a_1 a_2 < 4 a_1 a_2$ and as $u$ is a positive integer we necessarily have $u=1$. The latter implies
$a_1' = -a_2'$ and with $a_1', a_2' \geq 0$ we deduce $a_1'=a_2'=0$, which is a contradiction.
\end{proof}

We say that $(a_1,\ldots,a_d) \in \prN^d$ is a \textbf{\textit{minimal element}}, if $a_1 \cdots a_d \leq b_1 \cdots b_d$ for all $(b_1,\ldots,b_d) \in \prN^d$ with $(a_1,\ldots,a_d) \sim (b_1,\ldots,b_d)$. With this definition we obtain the following corollary of Lemma~\ref{pr:lemMinCastling}.

\begin{cor}
Every castling equivalence class in $\prN^d$ contains, up to permutation of the entries, a unique minimal element.
\end{cor}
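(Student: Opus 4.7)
The plan is to split the Corollary into the existence of a minimal element in each Castling equivalence class and its uniqueness up to permutation. Existence is immediate: the set of products $\lbrace b_1 \cdots b_d \mid (b_1,\ldots,b_d) \sim (a_1,\ldots,a_d) \rbrace$ is a nonempty subset of $\prN$ and hence admits a minimum by well-ordering; any tuple attaining this minimum is by definition a minimal element.

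For uniqueness, the strategy is to pass to the quotient graph $\Gamma$ whose vertices are permutation classes $[b_1,\ldots,b_d]$ of tuples in $\prN^d$, with an edge between $[b]$ and $[c]$ whenever $c$ is a Castling transform of some permutation of $b$. This is well-defined because, by Remark~\ref{pr:remCastlinEquiv} and the definition of Castling transform, the result depends up to permutation of the first $d-1$ entries only on the choice of $\sigma(d)$; hence each vertex has at most $d$ neighbors. Let $[a]$ and $[a']$ be two minimal elements in the same connected component of $\Gamma$ and pick a shortest $\Gamma$-path $[a] = [w^{(0)}] \to [w^{(1)}] \to \cdots \to [w^{(k)}] = [a']$. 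The aim is to show $k=0$.

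Along such a path set $p_j := w^{(j)}_1 \cdots w^{(j)}_d$. A short explicit computation (already implicit in Lemma~\ref{pr:lemMinCastling}) shows that a Castling transform leaving the product unchanged must be a mere permutation of the original tuple; hence in a shortest path each edge strictly changes the product, and minimality of $m := p_0 = p_k$ forces $p_j > m$ for every $0 < j < k$. Choose $j^\star \in \lbrace 1,\ldots,k-1 \rbrace$ maximizing $p_{j^\star}$; the no-plateau property combined with $p_0 = p_k = m$ yields $p_{j^\star-1} < p_{j^\star}$ and $p_{j^\star+1} < p_{j^\star}$. Applying Lemma~\ref{pr:lemMinCastling} at $[w^{(j^\star)}]$ then forces $[w^{(j^\star-1)}] = [w^{(j^\star+1)}]$, since at most one Castling neighbor has strictly smaller product up to permutation; excising $[w^{(j^\star)}]$ produces a strictly shorter path from $[a]$ to $[a']$, contradicting minimality. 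The degenerate case $k=1$ is ruled out directly, as a single-edge path would require the lone Castling step to preserve the product and thus to be a permutation, contradicting $[w^{(0)}] \neq [w^{(1)}]$.

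The main obstacle I anticipate is the no-plateau step — making rigorous that a product-preserving Castling transform acts as the identity on permutation classes. I would handle it via the substitution $P := a_{\sigma(1)} \cdots a_{\sigma(d-1)}$ and $q := a_{\sigma(d)}$: the old product is $Pq$ while the new one is $P(P-q)$, so equality forces $P = 2q$, hence $b_d = P - q = q = a_{\sigma(d)}$, and the transformed tuple is literally a permutation of the original. With this clean-up in hand the shortest-path argument above becomes essentially mechanical.
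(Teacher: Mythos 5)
Your proof is correct and supplies exactly the argument the paper leaves implicit (the Corollary is stated without proof, directly after Lemma~\ref{pr:lemMinCastling}, and the paper's follow-up remark about a ``unique, fastest way for computing the minimal element'' confirms this shortest-path reasoning is what is intended). You rightly notice that the Lemma alone is not quite enough and supply the needed ``no-plateau'' observation — that a product-preserving Castling transform is necessarily a permutation, via $P(P-q)=Pq\Rightarrow P=2q\Rightarrow b_d=q$ — which makes the excision argument at a maximal interior vertex go through cleanly.
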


Hence, for deciding if $\prC^{a_1} \otimes \cdots \otimes \prC^{a_d}$ is prehomogeneous, it suffices to know whether the unique minimal element in the castling class of $(a_1,\ldots,a_d)$ gives rise to a prehomogeneous space. Note, that by Lemma~\ref{pr:lemMinCastling} there is (up to permutation) a \emph{unique} way for computing the minimal element by castling transformations.

The described approach was first used by Sato and Kimura in \cite{prSatoKimura} to classify the prehomogeneous tensor spaces. Venturelli refined these results in his work \cite{prVenturelli} and below we state the main result of his work. It comes with the advantage that one only has to apply castling transforms if $N(a_1,\ldots,a_d)=2$.

\begin{thm}[Sato-Kimura and Venturelli]\label{pr:thmSatoKimuraVenturelli} \ \\
Let $d \geq 3$ and $(a_1,\ldots,a_d) \in \prN^d$.
	\begin{numlist}
	\item If $N(a_1,\ldots,a_d) \leq -1$, then $\prC^{a_1} \otimes \cdots \otimes \prC^{a_d}$ is \emph{not} prehomogeneous.
	\item If $N(a_1,\ldots,a_d) \in \lbrace 0,1 \rbrace$, then $\prC^{a_1} \otimes \cdots \otimes \prC^{a_d}$ is prehomogeneous.
	\item If $N(a_1,\ldots,a_d) = 2$, then $(a_1,\ldots,a_d)$ is castling equivalent to \emph{either}
		\begin{numlist}
		\item a minimal tuple $(b_1,\ldots,b_d)$ with $2 \leq b_{d-3} \leq \ldots \leq b_d$ and $d \geq 4$. In this case $\prC^{a_1} \otimes \cdots \otimes \prC^{a_d}$ is prehomogeneous.
		
		\item \emph{or} to a minimal tuple of the form $(1,\ldots,1,2,k,k)$ for a unique $k \in \prN$. Here, $\prC^{a_1} \otimes \cdots \otimes \prC^{a_d}$ is prehomogeneous if and only if $k \leq 3$.
		\end{numlist}
		
	\item If $N(a_1,\ldots,a_d) \geq 3$, then $\prC^{a_1} \otimes \cdots \otimes \prC^{a_d}$ is prehomogeneous.
	\end{numlist}
\end{thm}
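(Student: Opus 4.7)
Part (1) is immediate from the dimension bound derived in the Remark. For any $v\in V$,
\[
\dim G\cdot v \;\leq\; \dim G - \dim T \;=\; \sum_{i=1}^d a_i^2 - d + 1 \;=\; N(a_1,\ldots,a_d) + \dim V,
\]
so if $N(a_1,\ldots,a_d)\leq -1$ then every orbit has dimension strictly less than $\dim V$, and no orbit is Zariski-dense.

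For parts (2)--(4) the strategy is to pass to a canonical representative in each Castling class. Since $N$ is invariant under Castling equivalence (preceding Lemma) and prehomogeneity is also invariant under Castling equivalence (preceding Theorem), by the Corollary after Lemma~\ref{pr:lemMinCastling} we may replace $(a_1,\ldots,a_d)$ by its unique (up to permutation) minimal representative. Sorting $a_1\leq\ldots\leq a_d$, minimality amounts to the following dichotomy at each index $j$: either the Castling exposing $j$ is invalid (namely $\prod_{i\neq j}a_i<a_j$) or it does not strictly decrease the product ($\prod_{i\neq j}a_i\geq 2a_j$); in particular for the largest index one has $a_1\cdots a_{d-1}\geq 2a_d$ whenever this quantity is at least $a_d$. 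Moreover, since $N(1,a_2,\ldots,a_d)=N(a_2,\ldots,a_d)$ and $\prC^1\otimes V'\cong V'$, entries equal to $1$ can be stripped without altering $N$ or affecting prehomogeneity, and this stripping preserves minimality because the available Castling transforms are in bijection. If after stripping at most two entries remain $\geq 2$, then $V$ is a vector or matrix space and prehomogeneity is automatic by section~\ref{pr:secMatrices}.

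The heart of the classification is therefore the set of minimal tuples having at least three entries $\geq 2$. A direct arithmetic check, using $N=1-d+\sum a_i^2-\prod a_i$ together with the minimality inequality, shows that every such tuple satisfies $N\leq 2$, with equality attained precisely on the family $(2,k,k)$ for $d=3$ (equivalently $(1,\ldots,1,2,k,k)$ for any $d\geq 3$). Consequently, for $N\in\{0,1\}$ (part 2) or $N\geq 3$ (part 4) the minimal tuple must have at most two entries $\geq 2$, and prehomogeneity follows from the matrix/vector case. For $N=2$ (part 3), the genuinely tensorial minimal tuple is $(1,\ldots,1,2,k,k)$ of case (3b), and stripping the leading $1$'s reduces prehomogeneity to Theorem~\ref{pr:Thm2kk} for $\prC^2\otimes\prC^k\otimes\prC^k$, giving the criterion $k\leq 3$; any minimal tuple in case (3a) has at most two entries $\geq 2$ after stripping and is prehomogeneous for the trivial matrix/vector reason.

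The main obstacle is the arithmetic bound for minimal tuples with three or more entries $\geq 2$. One has to verify by hand: for $d=3$ with $a_1=2$, the minimality $2a_2\geq 2a_3$ forces $a_2=a_3=k$ and a direct computation gives $N=4+2k^2-2k^2=2$; for $d=3$ with $a_1\geq 3$ one checks $N\leq-2$; and for $d\geq 4$ with all entries $\geq 2$ the term $\prod a_i$ dominates $\sum a_i^2$, yielding $N\leq -3$. Combined with Theorem~\ref{pr:Thm2kk} this case analysis is precisely what occupies \cite{prSatoKimura} and is sharpened in \cite{prVenturelli} to isolate the single exceptional family appearing in (3b); we do not reproduce those computations here.
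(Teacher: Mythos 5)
The paper itself does not prove this theorem; it cites \cite[Theorem~1]{prVenturelli} and refers the reader there. Your proposal attempts to supply a proof outline, which is laudable, but the key arithmetic claim in the third paragraph is false and the argument collapses.

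You assert that every minimal tuple with at least three entries $\geq 2$ satisfies $N\leq 2$, and you use this to reduce parts (2) and (4) to the trivial matrix/vector case. This is not true. Consider $(2,2,n)$ for $n\geq 4$: since $a_1a_2 = 4 \leq n$, Castling on the last index is unavailable, and Castling on either of the first two indices gives $2n-2 \geq a_i$, so the product does not decrease; hence $(2,2,n)$ is minimal. But $N(2,2,n)=(n-2)^2+2$, which is $\geq 6$ for $n\geq 4$ and is unbounded. Likewise $(2,3,n)$ for $n\geq 6$ is minimal with $N=(n-3)^2+2$ unbounded, and $(2,2,2,n)$ for $n\geq 8$ is minimal with $N = n^2 - 8n + 9 \to \infty$. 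So there are infinitely many minimal tuples with three or more entries $\geq 2$ and arbitrarily large $N$. In all these cases prehomogeneity is a genuine statement about a tensor action with no dense $\prGL\times\prGL$ orbit in the sense of matrices; the matrix/vector observation does not cover them. Your sub-claim that minimality at the top index in $d=3$ forces $a_2=a_3$ also fails: minimality only says that Castling does not decrease the product \emph{when it is valid}, and for $(2,2,n)$ with $n\geq 4$ the Castling on index $3$ is simply unavailable, so no equality is forced.

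So parts (2), (3a) and (4) are not reduced to anything trivial by your argument; they require the substantive classification work of Sato--Kimura and Venturelli, and your outline replaces that work with a bound on $N$ that does not hold. Part (1) is correct and matches the dimension-count Remark preceding the theorem in the paper. The reduction to minimal Castling representatives and the stripping of entries equal to $1$ are also sound preliminary steps, but they do not take you as far as you claim.
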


\begin{proof}
This is \cite[Theorem~1]{prVenturelli} and for a proof we refer the reader to the corresponding article.
\end{proof}



\section{Tensor Rank and Border Rank}\label{pr:secRank}

We continue the study of the natural $G = \prGL(V_1) \times \cdots \times \prGL(V_d)$ action on $V = V_1 \otimes \cdots \otimes V_d$. For this, we introduce several notions of rank for tensors and illustrate in the case $\prC^2 \otimes \prC^2 \otimes \prC^2$ how one may study the $G$-orbits with these notions. Moreover, tensor (border) rank measures the (approximate) complexity. This will be beneficial in the following sections, especially when talking about the complexity of matrix multiplication. Besides complexity, the notions of tensor rank have many important applications, see \cite{prLandsbergBook}.

Considering Lemma~\ref{pr:lemSec1} b) there is a natural generalization of the concept of \emph{matrix rank}, the so-called \emph{tensor rank}.

\begin{defn}\label{pr:defnTensorRank}
Let $t \in V= V_1 \otimes \cdots \otimes V_d$ and $s \in W = W_1 \otimes \cdots \otimes W_d$.
We call $t$ \textbf{decomposable} if there are $v_i \in V_i$ such that $t = v_1 \otimes \cdots \otimes v_d$. The \textbf{rank} of $t$ is defined as
	\begin{align*}
	\prrk(t) := \min \left\lbrace r \in \prZ_{\geq 0} \;\Big\vert \; t = \sum_{i=1}^r t_i \, , \; t_i \in V_1 \otimes \cdots \otimes V_d \text{ decomposable} \right\rbrace \, .
	\end{align*}
Moreover, we say $s$ is a \textbf{restriction} of $t$, in symbols $s \leq t$, if there are linear maps $\alpha_i \colon V_i \to W_i$ such that $(\alpha_1 \otimes \cdots \otimes \alpha_d)(t) = s$.
\end{defn}

\begin{remark}\label{pr:remTensorRk} Let $t \in V= V_1 \otimes \cdots \otimes V_d$ and $s \in W = W_1 \otimes \cdots \otimes W_d$ be tensors.
	\begin{abclist}
	\item The tensor $t$ is decomposable if and only if $\prrk(t)\leq 1$. Hence, non-zero decomposable tensors are exactly the tensors of rank one.
	\item The natural action of $ G= \prGL(V_1) \times \cdots \times \prGL(V_d)$ preserves the rank of tensors. In contrast to the matrix case, there may be several distinct $G$-orbits for the same tensor rank. For example, even in the simple case $\prC^2 \otimes \prC^2 \otimes \prC^2$ with finitely many orbits there are four orbits for tensor rank two, compare Table~\ref{pr:tabRepresentativesC2} below.
	\item Since $V$ has a basis that only consists of decomposable tensors, we have the inequality $\prrk(t) \leq \dim V = a_1 \cdots a_d$, where $a_i = \dim V_i$. This inequality can be easily improved to $\prrk(t) \leq \prod_{j\neq i}a_j$ for any
$i=1,\ldots, d$, by expanding the expression of $t$ in the basis of $V$ obtained by the tensor product of the basis of each $V_i$,
compare with Exercise~1d).

	\item Let $\langle r \rangle := \sum_{i=1}^r e_i \otimes e_i \otimes e_i \in (\prC^r)^{\otimes 3}$ be the tensor of componentwise multiplication in $\prC^r$, where $e_1,\ldots,e_r$ denotes the standard basis of $\prC^r$. Then the rank of $t \in V_1\otimes V_2\otimes V_3$ is characterized by
	\begin{align*}
	\prrk(t) \leq r \quad \Longleftrightarrow \quad t \leq \langle r \rangle,
	\end{align*}
where $\dim V_i$ may be smaller or larger than $r$.
	Consequently, $s \leq t$ implies $\prrk(s) \leq \prrk(t)$, because the notion of restriction is transitive. Moreover, this discussion can be generalized to any number of factors $d$.
	\item Although the rank of a tensor is computable (e.g., by brute force), this is in general very inefficient.
	Indeed, for $d \geq 3$ the computation of tensor rank over $\prQ$, $\prR$ and $\prC$ is NP-hard, see \cite{prHillarLim}.	
	\end{abclist}
\end{remark}

Part~b) of the preceding remark shows that tensor rank is not enough for classifying the orbits, even if there are only finitely many. Even worse, also the naive attempt
	\begin{align*}
	X_r := \big\lbrace t \in V_1 \otimes \cdots \otimes V_d \mid \prrk(t) \leq r \big\rbrace
	\end{align*}
does not yield Zariski-closed sets in general, compare Example~\ref{pr:exWstate}. At least $X_1$ is always Zariski-closed, because it is the affine cone of the Segre variety $\prP(V_1) \times \cdots \times \prP(V_d) \subseteq \prP(V)$, compare Remark~\ref{pr:remTensorRk}~a). One way out is to enforce Zariski-closedness by definition. This leads to the \emph{border rank} of a tensor. This notion was used implicitly already in \cite{prBini2-7799} and the term first appeared in the article \cite{prBiniLottiRomani80}.

\begin{defn}\label{pr:defnBorderRk}
Let $t \in V_1 \otimes \cdots \otimes V_d$ and denote the Zariski-closure of $X_r$ by $\overline{X_r}$. The \textbf{border rank} of $t$ is defined as
 \begin{align*}
 \prbrk(t) := \min \left\lbrace r \in \prZ_{\geq 0} \mid t \in \overline{X_r} \right\rbrace = \min \big\lbrace r \mid t \text{ is a limit of rank } r \text{ tensors} \big\rbrace \, .
 \end{align*}
\end{defn}

\begin{remark}\label{pr:remZariskiVsEuclidean}
Since $X_r$ is Zariski-open in its Zariski-closure $\overline{X_r}$, the Zariski-closure of $X_r$ equals the Euclidean closure of $X_r$, compare, e.g., \cite[Section~3.1.6]{prLandsbergComplexity17}. Hence, in the above definition we may consider \emph{``t is a limit of rank $r$ tensors''} with respect to the Euclidean topology.
\end{remark}

By construction, $\underline{X}_r := \lbrace t \in V \mid \prbrk(t) \leq r \rbrace$ is an algebraic variety, and similarly to equation~\eqref{pr:eqXrMatrixCase} from the case of matrices, we get an ascending chain
	\begin{align*}
	\lbrace 0 \rbrace = \underline{X}_0 \subseteq \underline{X}_1 \subseteq \underline{X}_2 \subseteq \ldots \subseteq \underline{X}_{\dim V} = V \, .
	\end{align*}
In fact, if $S:= \prP(V_1) \times \cdots \times \prP(V_d) \subseteq \prP(V)$ denotes the Segre variety, then it can be proven that $\underline{X}_r$ is the affine cone of the $r$-th \emph{secant variety} of $S$. Thus, from the algebraic geometry point of view border rank is more convenient than tensor rank. For further details on the geometry of rank and border rank we refer to \cite[chapter~5]{prLandsbergBook}.

Of course, we always have $\prbrk(t) \leq \prrk(t)$ and this inequality may be strict.

\begin{ex}\label{pr:exWstate}
Let $\lbrace e_0, e_1 \rbrace$ be a basis of $\prC^2$. The tensor
	\begin{align*}
	w := e_0 \otimes e_0 \otimes e_1 + e_0 \otimes e_1 \otimes e_0 + e_1 \otimes e_0 \otimes e_0 \in \prC^2 \otimes \prC^2 \otimes \prC^2
	\end{align*}
has at most rank three. Actually, one has $\prrk(w)=3$, compare Exercise~2'(6). On the other hand, for $\prEps >0$
	\begin{equation}\label{pr:eqWstate}
	(e_0 + \prEps e_1) \otimes (e_0 + \prEps e_1) \otimes (e_0 + \prEps e_1) - e_0 \otimes e_0 \otimes e_0
	= \prEps w + \prEps^2 t_1 + \prEps^3 t_2
	\end{equation}
for some $t_1,t_2 \in \prC^2 \otimes \prC^2 \otimes \prC^2$. Equation~\eqref{pr:eqWstate} shows that
	\begin{align*}
	w = \lim_{\prEps \to 0} \, \frac{1}{\prEps} \big[ (e_0 + \prEps e_1) \otimes (e_0 + \prEps e_1) \otimes (e_0 + \prEps e_1) - e_0 \otimes e_0 \otimes e_0 \big] \, ,
	\end{align*}
i.e., $\omega$ is a limit of rank two tensors. Thus, $\prbrk(w) \leq 2 < 3 = \prrk(w)$. In fact $\prbrk(w)=2$, because the set of tensors of rank at most one is Zariski-closed.\hfill\prExample
\end{ex}

Considering equation~\eqref{pr:eqWstate} may motivate another way of defining the border rank, that will be needed in Section~\ref{pr:secComplexity}. Actually, this leads to a characterization of border rank that is the counterpart of describing the tensor rank via restriction as in Remark~\ref{pr:remTensorRk}~d). For simplicity we work with tensors of order three.

\begin{defn}
Let $d = 3$, $t \in V_1 \otimes V_2 \otimes V_3$ and let $\pre$ be an indeterminate over $\prC$. We say $t$ can be \textbf{approximated with degree $\mathbf{q}$ by tensors of rank $\mathbf{r}$}, in symbols $t \unlhd_q \langle r \rangle$, if there exist vectors $v_{i,\rho}(\pre) \in \prC[\pre] \otimes_{\prC} V_i$ for $i=1,2,3$ and $\rho = 1,\ldots,r$, such that
	\begin{equation}\label{pr:eqDegeneration}
	\pre^q t + \pre^{q+1}s(\pre) = \sum_{\rho=1}^r v_{1,\rho}(\pre) \otimes v_{2,\rho}(\pre) \otimes v_{3,\rho}(\pre) \, ,
	\end{equation}
where $s \in \prC[\pre] \otimes_{\prC} (V_1 \otimes V_2 \otimes V_3)$. (The notation $t \unlhd_q \langle r \rangle$ comes from the concept of \emph{degeneration}, and $\langle r \rangle$ denotes again the tensor $\sum_i e_i \otimes e_i \otimes e_i \in (\prC^r)^{\otimes 3}$.)
\end{defn}

Note that compared to \cite{prAlgComplTheoryBook} and \cite{prStrRelBilinearComplexity87} our definition shifts the role of $q$ by one. In \cite[Section~15.4]{prAlgComplTheoryBook} the border rank of a tensor $t$ is defined as
	\begin{equation}\label{pr:eqSecondDefBorderRk}
	\min \big\lbrace r \mid \exists \, q \colon \: t \unlhd_q \langle r \rangle \big\rbrace \, .
	\end{equation}
Replacing in equation~\eqref{pr:eqDegeneration} the variable $\pre$ by some $\prEps > 0$ and dividing by $\prEps^q$ we see that $\prbrk(t) \leq r$ whenever $t \unlhd_q \langle r \rangle$. Hence, we obtain $\prbrk(t) \leq \min \big\lbrace r \mid \exists \, q \colon \: t \unlhd_q \langle r \rangle \big\rbrace$ for our definition of border rank. Actually, also the converse inequality holds, i.e., both definitions coincide. An explanation can be found in \cite[Section~20.6]{prAlgComplTheoryBook} or in \cite[Sections~4 and 5]{prStrRelBilinearComplexity87}.\footnote{More precisely, at the beginning of \cite[Section~4]{prStrRelBilinearComplexity87} Strassen defines border rank as in Definition~\ref{pr:defnBorderRk} and states that $t$ is a degeneration of the tensor $\langle \prbrk(t) \rangle$, in symbols $t \unlhd \langle \prbrk(t) \rangle$. Combining Lemma~5.5 and Theorem~5.8 from \cite{prStrRelBilinearComplexity87} shows the equivalence of the definitions.}

The advantage of defining border rank as in \eqref{pr:eqSecondDefBorderRk} is, that it allows to \emph{switch from approximate to exact algorithms}. This is made precise in the following lemma, which will be helpful in Section~\ref{pr:secComplexity}.

\begin{lem}\label{pr:lemApproxToExact}
Let $t \in V_1 \otimes V_2 \otimes V_3$. If $t \unlhd_q \langle r \rangle$, then $\prrk(t) \leq (q+1)^2 r$.
\end{lem}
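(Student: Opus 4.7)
The plan is to extract $t$ as the coefficient of $\pre^q$ on both sides of the defining identity and then count the resulting rank-one summands. Writing $v_{i,\rho}(\pre)=\sum_{k\geq 0}\pre^k v_{i,\rho}^{(k)}$ with $v_{i,\rho}^{(k)}\in V_i$ (only finitely many nonzero), the left-hand side of
\[
\pre^q t+\pre^{q+1}s(\pre)=\sum_{\rho=1}^r v_{1,\rho}(\pre)\otimes v_{2,\rho}(\pre)\otimes v_{3,\rho}(\pre)
\]
contributes exactly $t$ to the coefficient of $\pre^q$ (the remainder $\pre^{q+1}s(\pre)$ lives in degrees $\geq q+1$), while the right-hand side expands by $\prC[\pre]$-multilinearity of the tensor product and yields
\[
t=\sum_{\rho=1}^r\sum_{\substack{a,b,c\geq 0\\ a+b+c=q}}v_{1,\rho}^{(a)}\otimes v_{2,\rho}^{(b)}\otimes v_{3,\rho}^{(c)}.
\]

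The key bookkeeping step is to reparametrize the inner sum so as to count $(q+1)^2$ summands per $\rho$. Adopting the convention $v_{3,\rho}^{(k)}:=0$ for $k<0$, the triples $(a,b,c)$ with $a+b+c=q$ and $a,b,c\geq 0$ are captured by letting $(a,b)$ range independently over $\{0,1,\ldots,q\}^2$ and setting $c:=q-a-b$; the pairs with $a+b>q$ contribute zero through the convention. This gives
\[
t=\sum_{\rho=1}^r\sum_{a=0}^q\sum_{b=0}^q v_{1,\rho}^{(a)}\otimes v_{2,\rho}^{(b)}\otimes v_{3,\rho}^{(q-a-b)},
\]
which is a presentation of $t$ as a sum of at most $(q+1)^2 r$ decomposable tensors, so $\prrk(t)\leq(q+1)^2 r$.

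The only point demanding attention is the legality of coefficient extraction in the polynomial identity, i.e.\ that both sides of the defining equation, viewed in $\prC[\pre]\otimes(V_1\otimes V_2\otimes V_3)$, are equal as polynomials and hence have equal coefficients in each degree; this is immediate from $\prC[\pre]$-linearity of the tensor product. No substantial obstacle is anticipated: the argument is essentially combinatorial. One could in fact obtain the sharper bound $\binom{q+2}{2}r$ by indexing the sum directly by triples summing to $q$, but the reparametrization above produces the slightly looser, symmetric bound $(q+1)^2 r$ in the form the lemma records and which suffices for its intended use in Section~\ref{pr:secComplexity}.
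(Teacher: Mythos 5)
Your proof is correct and follows essentially the same route as the paper's: extract the coefficient of $\pre^q$ from the degeneration identity and count the resulting decomposable summands. The only cosmetic difference is the final bookkeeping step — the paper counts the $\binom{q+2}{2}$ triples with $\lambda+\mu+\nu=q$ directly and then observes $\binom{q+2}{2}\leq(q+1)^2$, whereas you reindex over $(a,b)\in\{0,\ldots,q\}^2$ with a zero-padding convention — but this changes nothing of substance.
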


\begin{proof}
Let $t \unlhd_q \langle r \rangle$ and assume a representation as in equation~\eqref{pr:eqDegeneration}. Then we can write
	\begin{align*}
	v_{1,\rho}(\pre) = \sum_{\lambda \geq 0} v_{1,\rho}^{(\lambda)} \pre^{\lambda} \, , \quad
	v_{2,\rho}(\pre) = \sum_{\mu \geq 0} v_{2,\rho}^{(\mu)} \pre^{\mu} \, , \quad
	v_{3,\rho}(\pre) = \sum_{\nu \geq 0} v_{3,\rho}^{(\nu)} \pre^{\nu} \, ,
	\end{align*}
where $v_{1,\rho}^{(\lambda)} \in V_1$, $v_{2,\rho}^{(\lambda)} \in V_2$ and $v_{3,\rho}^{(\lambda)} \in V_3$, only finitely many being non-zero. Multiplying the right hand side of equation~\eqref{pr:eqDegeneration} out and comparing the coefficient for $\pre^q$ gives
	\begin{align*}
	t = \sum_{\rho=1}^r \sum_{\substack{\lambda,\mu,\nu \geq 0 \\ \lambda + \mu + \nu = q}} v_{1,\rho}^{(\lambda)} \otimes v_{2,\rho}^{(\mu)} \otimes v_{3,\rho}^{(\nu)} \, .
	\end{align*}
Combining this with the fact that
	\begin{align*}
	\big\vert \left\lbrace (\lambda,\mu,\nu) \in \prZ_{\geq 0}^3 \mid \lambda + \mu + \nu = q \right\rbrace \big\vert = {{q+2}\choose{2}} = \frac{(q+2)(q+1)}{2} \leq (q+1)^2
	\end{align*}
we conclude that $\prrk(t) \leq r(q+1)^2$.
\end{proof}

In the following we present parts of the material from the exercise sessions of the fall school. These fit nicely here and form a natural extension of this section. First we introduce the multilinear rank and solve part~a) and b) of Exercise~1. As an application, we see afterwards how multilinear rank and tensor rank together classify the orbits of the natural $\prGL_2(\prC) \times \prGL_2(\prC) \times \prGL_2(\prC)$ action on $\prC^2 \otimes \prC^2 \otimes \prC^2$. Thereby, we solve Exercises~2 and 2' partly and emphasize the geometric ideas. \textit{(Those who wish to solve the exercises on their own, are encouraged to jump to Section~\ref{pr:secComplexity} \emph{after} reading Definition~\ref{pr:defnMultilinearRk}.)}

For simplicity we work with $3$-tensors and assume $V = \prC^a \otimes \prC^b \otimes \prC^c$. Let $U$ and $W$ be $\prC$-vector spaces, denote the dual of $U$ by $U^\vee$ and the vector space of linear maps from $U^\vee$ to $W$ by $\prHom(U^\vee, W)$. Recall the canonical isomorphism
	\begin{align*}
	U \otimes W \to \prHom \left(U^\vee, W \right), \quad u \otimes w \; \mapsto \; \big( f \mapsto f(u)w \big) \, ,
	\end{align*}
which is given on decomposable tensors.

%

\begin{defn}\label{pr:defnMultilinearRk}
A tensor $t \in V$ corresponds under the canonical isomorphism
	\begin{align*}
	V \cong \prHom \left( \left( \prC^a \right)^\vee, \prC^b \otimes \prC^c \right) \quad \text{ to } \quad \Gamma_1(t) \colon \left( \prC^a \right)^\vee \to \prC^b \otimes \prC^c \, ,
	\end{align*}
which is a linear map and called the first \textbf{contraction} (or \textbf{flattening}) of $t$. Similarly, one defines $\Gamma_2(t) \colon \left( \prC^b \right)^\vee \to \prC^a \otimes \prC^c$ and $\Gamma_3(t) \colon \left( \prC^c \right)^\vee \to \prC^a \otimes \prC^b$, the second and third contraction respectively.

The rank of $\Gamma_{l}(t)$ is denoted $r_l(t)$ and the tuple $\big( r_1(t),r_2(t),r_3(t) \big)$ is called the \textbf{multilinear rank} of $t$.
\end{defn}

\begin{remark}\label{pr:remMultilinearRkInvariant}
The natural $\prGL_a(\prC) \times \prGL_b(\prC) \times \prGL_c(\prC)$ action on $V$ preserves multilinear rank, because $\prGL_n(\prC) \times \prGL_m(\prC)$ is a subgroup of $\prGL(\prC^n \otimes \prC^m)$ via $(g_1,g_2) \mapsto g_1 \otimes g_2$.
\end{remark}

\begin{ex}\label{pr:exContractionMatrix}
Let us write for $t \in V$ the first contraction in coordinates. To do so, choose bases $\lbrace a_i \rbrace_i$, $\lbrace b_j \rbrace_j$ and $\lbrace c_k \rbrace_k$ of $\prC^a$, $\prC^b$ and $\prC^c$ respectively. Then 
\begin{align*}
	t = \sum_{i,j,k} t_{ijk} \, a_i \otimes b_j \otimes c_k
	\end{align*}
with $t_{ijk} \in \prC$. Now, the first contraction of $t$ is 
	\begin{align*}
	\Gamma_1(t) \colon \left( \prC^a \right)^\vee \to \prC^b \otimes \prC^c, \quad f \mapsto \sum_{i,j,k} f(a_i) t_{ijk} \: b_j \otimes c_k \, .
	\end{align*}
If $\lbrace \alpha_i \rbrace_i$ is the dual basis of $\lbrace a_i \rbrace_i$, then $\Gamma_1(t)$ is represented by the matrix
	\begin{equation}\label{pr:eqContractionMatrix}
	[t_{ijk}]_{i,(j,k)} = 
	\begin{pmatrix}
	t_{111} & t_{112} & \ldots & t_{11c} & t_{121} & \ldots & \ldots & t_{1bc} \\ 
	t_{211} & t_{212} & \ldots & t_{21c} & t_{221} & \ldots & \ldots & t_{2bc} \\ 
	\vdots & \vdots &  & \vdots & \vdots & & & \vdots \\ 
	t_{a11} & t_{a12} & \ldots & t_{a1c} & t_{a21} & \ldots & \ldots & t_{abc}
	\end{pmatrix} \in \prC^{a \times bc}
	\end{equation}
where $(\alpha_1,\ldots,\alpha_a)$ and $(b_1 \otimes c_1,\ldots,b_1 \otimes c_c,b_2 \otimes c_1, \ldots \ldots, b_b \otimes c_c)$ are the ordered bases, which are considered for this matrix presentation.\hfill\prExample
\end{ex}

The proof of the next proposition solves part a) and b) of Exercise 1.

\begin{prop}\label{pr:propMultilinearRk}
Let $t \in \prC^a \otimes \prC^b \otimes \prC^c$. Then $\prrk(t) = 1$ if and only if $t$ has multilinear rank $(1,1,1)$. In fact, already $r_i(t) = r_j(t) = 1$ for some $i,j \in \lbrace 1,2,3 \rbrace$ with $i \neq j$ implies $\prrk(t)=1$.
\end{prop}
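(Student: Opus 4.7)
The plan is to prove both directions by exploiting the standard fact that, under the canonical isomorphism $\prHom(U^\vee, W) \cong U \otimes W$, rank-one linear maps correspond exactly to decomposable tensors.

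For the ``only if'' direction (the easy one), I would simply take $t = v_1 \otimes v_2 \otimes v_3$ with $v_i \neq 0$ and unwind each contraction: $\Gamma_1(t)(f) = f(v_1)(v_2 \otimes v_3)$, whose image is the line $\prC \cdot (v_2 \otimes v_3)$, so $r_1(t) = 1$. The same computation applied to $\Gamma_2$ and $\Gamma_3$ yields multilinear rank $(1,1,1)$.

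For the converse, and for the strengthened statement, assume without loss of generality that $r_1(t) = r_2(t) = 1$. The first step is to note that $\Gamma_1(t) \in \prHom\bigl((\prC^a)^\vee, \prC^b \otimes \prC^c\bigr)$ has rank $1$ if and only if, under the canonical isomorphism $\prHom\bigl((\prC^a)^\vee, \prC^b \otimes \prC^c\bigr) \cong \prC^a \otimes (\prC^b \otimes \prC^c)$, the image is a decomposable tensor. Thus there exist nonzero $v_1 \in \prC^a$ and $w \in \prC^b \otimes \prC^c$ with $t = v_1 \otimes w$ inside $\prC^a \otimes \prC^b \otimes \prC^c$.

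The second step is to show that, for such $t$, one has $r_2(t) = \prrk(w)$, where on the right $w$ is regarded as a $b \times c$ matrix. Writing $w = \sum_{j,k} w_{jk}\, b_j \otimes c_k$ so that $t_{ijk} = (v_1)_i w_{jk}$, the matrix of $\Gamma_2(t)$ from Example~\ref{pr:exContractionMatrix} has its $j$-th row equal to $v_1 \otimes (w_{j,\cdot})$; its row span is therefore $v_1 \otimes \prc\text{span}\{w_{j,\cdot} : j\}$, which (since $v_1 \neq 0$) has dimension equal to the matrix rank of $w$. Hence $r_2(t) = 1$ forces $w$ to have matrix rank one, so $w = v_2 \otimes v_3$ for some nonzero $v_2 \in \prC^b$, $v_3 \in \prC^c$. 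Substituting back gives $t = v_1 \otimes v_2 \otimes v_3$, proving $\prrk(t) = 1$. The same argument with the roles of the indices permuted covers the remaining choices of $\{i,j\}$.

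There is essentially no obstacle here beyond bookkeeping: the entire content is the identification of rank-one linear maps with decomposable tensors, applied twice (once in the three-factor picture via $\Gamma_1$, once in the reduced two-factor matrix picture for $w$). The mildly delicate point to state carefully is step two — that flattening $t = v_1 \otimes w$ along the second factor gives a matrix whose rank equals the matrix rank of $w$ — but this is immediate from the row description above.
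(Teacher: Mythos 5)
Your proof is correct. It is close in spirit to the paper's (both extract factors from the flattenings one at a time), but the organization is genuinely different. The paper works entirely in coordinates: it picks bases, normalizes so $t_{111}\neq 0$, extracts scalars $\lambda_i$ from the linear dependence of the rows of $\Gamma_1(t)$'s matrix, then scalars $\mu_j$ from $\Gamma_2(t)$'s, and assembles the decomposition $t = \left(\sum_i \lambda_i a_i\right)\otimes\left(\sum_j \mu_j b_j\right)\otimes\left(\sum_k t_{11k} c_k\right)$ by direct computation. You instead make the factor extraction structural: first you use $r_1(t)=1$ together with the identification of rank-one linear maps with decomposable tensors to conclude $t = v_1\otimes w$ for some nonzero $v_1\in\prC^a$ and $w\in\prC^b\otimes\prC^c$; then you prove the clean intermediate fact that $r_2(v_1\otimes w)$ equals the matrix rank of $w$ (via the row description of the flattening), so $r_2(t)=1$ reduces the remaining work to the known matrix case. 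Your route buys modularity and reuses the two-factor result as a black box, at the cost of stating and verifying the auxiliary observation about flattenings of tensors of the form $v_1\otimes w$; the paper's route is a single self-contained coordinate manipulation with no auxiliary lemma but more bookkeeping. Both arguments are sound and the strengthened claim (only two of the three multilinear ranks need equal one) follows in both by the same permutation-of-indices remark.
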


\begin{proof}
If $\prrk(t)=1$, then there are non-zero vectors $a_1 \in \prC^a$, $b_1 \in \prC^b$, $c_1 \in \prC^c$ such that $t = a_1 \otimes b_1 \otimes c_1$. Extending these vectors to bases $\lbrace a_i \rbrace_i$, $\lbrace b_j \rbrace_j$ and $\lbrace c_k \rbrace_k$ we see that, similar to Example~\ref{pr:exContractionMatrix}, all three contractions have
	\begin{align*}
	\begin{pmatrix} 1 & 0 & \ldots & 0 \\ 0 & 0 & \ldots & 0 \\ \vdots & \vdots & & \vdots \\ 0 & 0 & \ldots & 0 \\
	\end{pmatrix}
	\end{align*}
as a matrix representation. Therefore, $t$ has multilinear rank $(1,1,1)$.

Conversely, assume without loss of generality that $1 = r_1(t) = r_2(t)$. Choose bases $\lbrace a_i \rbrace_i$, $\lbrace b_j \rbrace_j$ and $\lbrace c_k \rbrace_k$ of $\prC^a$, $\prC^b$ and $\prC^c$ respectively. Again, we denote the coordinates of $t$ by $t_{ijk}$ and, after reordering the bases, we may assume $t_{111} \neq 0$. Since $r_1(t) = 1$, all rows of the matrix in \eqref{pr:eqContractionMatrix} are linearly dependent. Using that the first row is non-zero ($t_{111} \neq 0$), we obtain $\lambda_i \in \prC$ (possibly zero) for $i \geq 2$ such that
	\begin{align*}
	\forall j \in \lbrace 1,\ldots,b \rbrace : \forall k \in \lbrace 1,\ldots,c \rbrace : \quad \lambda_i t_{1jk} = t_{ijk} \, .
	\end{align*}
Setting $\lambda_1 := 1$ we get the latter property also for $i=1$.
Similarly, $r_2(t)=1$ gives $\mu_j \in \prC$ for $j \geq 1$ ($\mu_1=1$) such that $\mu_{j} t_{i1k} = t_{ijk}$ for all $i$ and all $k$. Now, we compute
	\begin{align*}
	t &= \sum_{i,j,k} t_{ijk} \, a_i \otimes b_j \otimes c_k =\sum_{i,j,k} \lambda_i t_{1jk}\, a_i \otimes b_j \otimes c_k\\ 
	&= \sum_{j,k} t_{1jk} \left( \sum_i \lambda_i a_i \right) \otimes b_j \otimes c_k
	= \sum_{j,k} \mu_j t_{11k} \left( \sum_i \lambda_i a_i \right) \otimes b_j \otimes c_k \\
	&= \left( \sum_i \lambda_i a_i \right) \otimes \left( \sum_j \mu_j b_j \right) \otimes \left( \sum_k t_{11k} c_k \right) \, ,
	\end{align*}
which shows that $t$ is decomposable. Finally, $t \neq 0$ yields $\prrk(t)=1$.
\end{proof}

%
%

\begin{remark}
The notion of multilinear rank generalizes to $d$-tensors and is a tuple of the form $\big( r_1(t),\ldots,r_d(t) \big)$, where the $r_i(t)$ are the ranks of the $d$ contractions defined by $t \in V_1 \otimes \cdots \otimes V_d$. Furthermore, also the statement of Proposition~\ref{pr:propMultilinearRk} generalizes to $d$-tensors as may already be clear from the proof. 
\end{remark}


\subsection{Extended Example on $\prC^2 \otimes \prC^2 \otimes \prC^2$}
\label{pr:secDim2}

Now, we study the natural action of $G := \prGL_2(\prC) \times \prGL_2(\prC) \times \prGL_2(\prC)$ on $\prC^2 \otimes \prC^2 \otimes \prC^2$. Thereby, we offer an outline for solving Exercises~2 and 2'. Not everything will be proven, but a complete overview to all results of these two exercises is provided by Table~\ref{pr:tabOrbitsProj}, Table~\ref{pr:tabRepresentativesC2} and Figure~\ref{pr:figOrbitsProjC2}.

To stress which tensor factor is meant, we set $V := \prC^a \otimes \prC^b \otimes \prC^c$ for $a,b,c=2$. Moreover, we fix bases $\lbrace a_0, a_1 \rbrace$, $\lbrace b_0, b_1 \rbrace$ and $\lbrace c_0, c_1 \rbrace$ of $\prC^a$, $\prC^b$ and $\prC^c$ respectively. Often we will suppress the tensor product to increase readability, i.e., instead of $a_0 \otimes b_0 \otimes c_0$ we write $a_0 b_0 c_0$. Whenever we need to consider an ordered basis of $\prC^a \otimes \prC^b$ (e.g., for a matrix representing a contraction of a tensor), we use $(a_0b_0,a_0b_1,a_1b_0,a_1b_1)$ and similarly for $\prC^a \otimes \prC^c$, $\prC^b \otimes \prC^c$.

First, we discuss how many $G$-orbits there are. Afterwards we present a classification of all non-zero orbits and give some geometric ideas and intuition. For this, it will be more convenient to work with the induced $G$-action on $\prP(V)$.

\medskip

Remember that tensor rank and multilinear rank are $G$-invariant. Hence, we may ask how many orbits there are for the possible ranks, respectively multilinear ranks. 
Of course, to rank $0$ (and multilinear rank $(0,0,0)$) corresponds just one orbit, the zero orbit. Besides this, there are a priori eight possible multilinear ranks, namely $(i,j,k)$ for $i,j,k \in \lbrace 1,2 \rbrace$. However, Proposition~\ref{pr:propMultilinearRk} shows that $(1,1,2)$, $(1,2,1)$ and $(2,1,1)$ are not admissible, because $r_i(t) = r_j(t)=1$ for $i\neq j$ enforces multilinear rank $(1,1,1)$. Thus, we are left with five tuples, which indeed happen to be multilinear ranks of some tensors in $V$.

The rank one tensor $a_0 b_0 c_0$ has multilinear rank $(1,1,1)$ and the rank two tensor $a_0 b_0 c_0 + a_1 b_1 c_1$ has multilinear rank $(2,2,2)$. Moreover, the first, second and third contraction of $a_0b_0c_0 + a_0 b_1 c_1$ have representing matrix
	\begin{align*}
	\begin{pmatrix}
	1 & 0 & 0 & 1 \\ 0 & 0 & 0 & 0
	\end{pmatrix} \, , \quad
	\begin{pmatrix}
	1 & 0 & 0 & 0 \\ 0 & 1 & 0 & 0
	\end{pmatrix}  \quad \text{ and } \quad
	\begin{pmatrix}
	1 & 0 & 0 & 0 \\ 0 & 1 & 0 & 0
	\end{pmatrix}
	\end{align*}
respectively. (Recall: The ordered basis for $\prC^b \otimes \prC^c$ is $(b_0c_0, b_0c_1, b_1c_0, b_1c_1)$ giving the matrix on the left.) Thus, the tensor $a_0b_0c_0 + a_0 b_1 c_1$ has multilinear rank $(1,2,2)$. Analogously, $a_0b_0c_0 + a_1 b_0 c_1$ and $a_0b_0c_0 + a_1 b_1 c_0$ have multilinear ranks $(2,1,2)$ and $(2,2,1)$ respectively.

We can conclude that there are at least five non-zero orbits. Actually, only the multilinear rank $(2,2,2)$ gives rise to more than one orbit. Let us first argue, why the other multilinear ranks correspond to exactly one orbit. Clearly, a rank one tensor $x  y  z \in V$ is contained in the orbit of $a_0b_0c_0$, since there are invertible linear maps sending $a_0$ to $x$, $b_0$ to $y$ and $c_0$ to $z$. Thus, the rank one tensors form one orbit.

Given $t \in V$ of multilinear rank $(1,2,2)$ we deduce with $\prrk(t) \leq r_1(t)r_2(t) = 2$ (Exercise~1d)) that $t$ has rank two. Therefore, we may write $t = x_0y_0z_0 + x_1y'_1z_1$ for some $x_i \in \prC^a$, $y_0, y'_1 \in \prC^b$ and $z_i \in \prC^c$. Now, note that $r_1(t) = 1$ implies that $x_0$ and $x_1$ are linearly dependent.  Hence, we may write $t = xy_0z_0 + x y_1z_1$ with $x := x_0 \neq 0$ and $y_1 := \lambda y'_1$, where $\lambda \in \prC^\times$ is such that $x_1 = \lambda x_0$. On the other hand, the $y_i$ and the $z_i$ form a basis of $\prC^2$ by $r_2(t)=r_3(t)=2$. Thus, there are unique $g_2,g_3 \in \prGL_2(\prC)$ with $g_2(b_i)=y_i$ and $g_3(c_i)=z_i$. Choosing additionally an invertible linear map that sends $a_0$ to $x$ shows that $t$ is in the orbit of $a_0b_0c_0 + a_0b_1c_1$. We deduce that there is exactly one $G$-orbit for $(1,2,2)$, and similarly for $(2,1,2)$, $(2,2,1)$.

Considering $(2,2,2)$, note that this multilinear rank is also attained by the tensor $w:= a_0b_0c_1 + a_0b_1c_0 + a_1b_0c_0$, because
	\begin{align*}
	\begin{pmatrix}
	0 & 1 & 1 & 0 \\ 1 & 0 & 0 & 0
	\end{pmatrix}
	\end{align*}
is a matrix presentation for all contractions of $w$. Contrary to $a_0b_0c_0 + a_1b_1c_1$, $w$ has rank three by Exercise~2'(6). Hence, we get at least two distinct orbits for $(2,2,2)$. This may be also seen as follows. Given $t = \sum_{i,j,k} t_{ijk} \: a_ib_jc_k \in V$ in coordinates, the \textit{hyperdeterminant} is
	\begin{align}\label{pr:eqHyperdet}
	Det(t) = &\left( t_{000}^2t_{111}^2 + t_{001}^2t_{110}^2 + t_{010}^2t_{101}^2 + t_{011}^2 t_{100}^2 \right) \\
	&- 2 ( t_{000}t_{001}t_{110}t_{111} + t_{000}t_{010}t_{101}t_{111} + t_{000}t_{011}t_{100}t_{111} \notag \\
	&\qquad + t_{001}t_{010}t_{101}t_{110} + t_{001}t_{011}t_{110}t_{100} + t_{010}t_{011}t_{101}t_{100} ) \notag \\
	&+ 4 \left( t_{000}t_{011}t_{101}t_{110} + t_{001}t_{010}t_{100}t_{111} \right) \, .\notag
	\end{align}
One can show, compare \cite[Section~14.1~A]{prGKZdiscriminants}, that
	\begin{align*}
	\forall \, (g_1,g_2,g_3) \in G \colon \quad Det\big( (g_1,g_2,g_3) \cdot t \big) = \det(g_1)^2 \det(g_2)^2 \det(g_3)^2 Det(t) \, .
	\end{align*}
Hence, the vanishing locus of the hyperdeterminant is $G$-invariant. Since $Det(w)=0$ but $Det(a_0b_0c_0 + a_1b_1c_1) \neq 0$, the respective orbits have to be distinct. We introduced $Det$ as it is needed for the classification of the orbits below, see Exercise 2' at the end or \cite{prOtt}.

Analogously to the case $(1,2,2)$, a tensor $t \in V$ of rank two may be written as $t = x_0y_0z_0 + x_1y_1z_1$. If $t$ has additionally multilinear rank $(2,2,2)$, then the $x_i$, the $y_i$ and the $z_i$ are linearly independent. Thus, $t$ is contained in the orbit of $a_0b_0c_0 + a_1b_1c_1$ and we conclude that there is exactly one orbit of rank two tensors with multilinear rank $(2,2,2)$. A similar argument for rank three is more cumbersome. Instead one may use geometric ideas to conclude that there are in fact \emph{exactly} two orbits for $(2,2,2)$, see Exercise~2c).

Altogether, three is the largest attained rank and there are six non-zero orbits in $V$. For stating the orbits it is more convenient to work over $\prP(V)$. Of course, the $G$-action on $V$ induces a $G$-action on $\prP(V)$ by $g \cdot [v] := [g \cdot v]$ for $[v] \in \prP(V)$. By construction, the projection $\pi \colon V \!\setminus \! \lbrace 0 \rbrace \to \prP(V)$ is $G$-equivariant. Thus, a non-zero orbit $G \cdot v$ may be recovered by $\pi^{-1} \left( G \cdot [v] \right)$. The complete characterization of the orbits of $\prP(V)$, which follows from the above discussion and Exercises~2 and 2', is given in Table~\ref{pr:tabOrbitsProj} and Table~\ref{pr:tabRepresentativesC2}. The inclusion relations between the orbit closures is illustrated in Figure~\ref{pr:figOrbitsProjC2}.

\begin{table}[ht]
  \renewcommand{\arraystretch}{1.6}
  \centering
  \begin{tabular}{|c|c|c|c|}
  \hline
  \textbf{Nr.} & \textbf{Orbit Closure} & \textbf{Orbit} & \textbf{Codim.} \\
  \hline 
  \textbf{1.} & $\prP \left( \prC^a \otimes \prC^b \otimes \prC^c \right) \cong \prP^7$ & $\prP(V) \!\setminus\! \Delta$ & 0 \\
  \hline 
  \textbf{2.} & $\Delta := \lbrace Det=0 \rbrace$ & $\Delta \!\setminus\! \big( Z_1 \cup Z_2 \cup Z_3 \big)$ & 1 \\
  \hline 
  \textbf{3.} & $Z_1 := \sigma_1 \big( \prP(\prC^a) \times \prP(\prC^b \otimes \prC^c) \big)$ & $Z_1 \!\setminus \! S$ & 3 \\ 
  \textbf{4.} & $Z_2 := \sigma_2 \big(\prP(\prC^b) \times \prP(\prC^a \otimes \prC^c) \big)$ & $Z_2 \!\setminus \! S$ & 3 \\ 
  \textbf{5.} & $Z_3 := \sigma_3 \big(\prP(\prC^a \otimes \prC^b) \times \prP(\prC^c) \big)$ & $Z_3 \!\setminus \! S$ & 3 \\ 
  \hline 
  \textbf{6.} & $S:= \sigma \left( \prP^1 \times \prP^1 \times \prP^1 \right)$ & $S$ & 4 \\ 
  \hline
  \end{tabular} 
  \caption{The $G$-orbits and their closures in $\prP(V)$ for $V = \prC^a \otimes \prC^b \otimes \prC^c$, $a=b=c=2$. $Det$ denotes the hyperdeterminant, $\sigma \colon \prP^1 \times \prP^1 \times \prP^1 \to \prP(V)$ and $\sigma_1 \colon \prP(\prC^a) \times \prP(\prC^b \otimes \prC^c) \to \prP(V)$ (similarly $\sigma_2, \sigma_3$) are Segre embeddings.}
  \label{pr:tabOrbitsProj}
\end{table}

\begin{table}[ht] 
  \renewcommand{\arraystretch}{1.5}  
  \centering
  \begin{tabular}{|c|c|c|c|c|}
  \hline
  \textbf{Nr.} & \textbf{Representative} & \textbf{Multil. Rk} & \textbf{Rk} & \textbf{Rk Decomposition} \\ 
  \hline 
  \textbf{1.} & $a_0b_0c_0 + a_1 b_1 c_1$ & (2,2,2) & 2 & unique \\ 
  \hline 
  \textbf{2.} & $a_0 b_0 c_1 + a_0 b_1 c_0 + a_1 b_0 c_0$ & (2,2,2) & 3 & infinitely many \\ 
  \hline 
  \textbf{3.} & $a_0b_0c_0 + a_0b_1c_1$ & (1,2,2) & 2 & \\ 
  \textbf{4.} & $a_0b_0c_0 + a_1b_0c_1$ & (2,1,2) & 2 & infinitely many \\ 
  \textbf{5.} & $a_0b_0c_0 + a_1b_1c_0$ & (2,2,1) & 2 & \\ 
  \hline 
  \textbf{6.} & $a_0b_0c_0$ & (1,1,1) & 1 & unique \\ 
  \hline 
  \end{tabular} 
  \caption{Representatives of the $G$-orbits of $\prP(V)$ for $V = \prC^a \otimes \prC^b \otimes \prC^c$, $a=b=c=2$. Here $\lbrace a_0, a_1 \rbrace$, $\lbrace b_0, b_1 \rbrace$ and $\lbrace c_0, c_1 \rbrace$ are bases of $\prC^a$, $\prC^b$ and $\prC^c$ respectively.}
  \label{pr:tabRepresentativesC2}
\end{table}

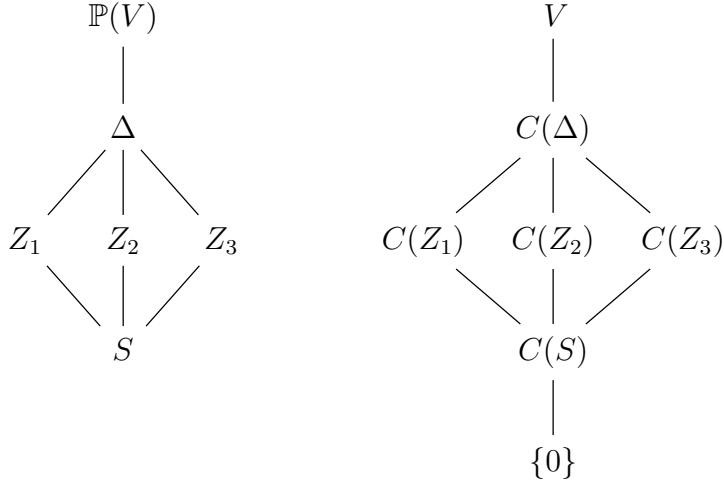
\begin{figure}[ht]
  \centering
  \begin{tikzcd}[column sep = tiny]
  & \prP(V) \ar[d, dash] & & & & & \,V \ar[d, dash] &  \\ 
  & \Delta \ar[d, dash] \ar[ld, dash] \ar[rd, dash] & & & & & C(\Delta) \ar[d, dash] \ar[ld, dash] \ar[rd, dash] & \\ 
  Z_1 \ar[rd, dash] & Z_2 \ar[d, dash] & Z_3 \ar[ld, dash] & & \quad & C(Z_1) \ar[rd, dash] & C(Z_2) \ar[d, dash] & C(Z_3) \ar[ld, dash] \\
  & S & & & & & C(S) \ar[d, dash] & \\
  & & & & & & \lbrace 0 \rbrace &
  \end{tikzcd}
  \caption{Containment graph for the orbit closures of $\prP(V)$ and for the orbit closures of $V = \prC^2 \otimes \prC^2 \otimes \prC^2$. Here, the affine cone of a Zariski-closed $X \subseteq \prP(V)$ is denoted by $C(X) \subseteq V$.}
  \label{pr:figOrbitsProjC2}
\end{figure}

We remark that the combination of tensor rank and multilinear rank suffices to classify the orbits, compare Table~\ref{pr:tabRepresentativesC2}. The orbit $\prP(V) \!\setminus\! \Delta$ is Zariski-dense in $\prP(V)$, hence a generic tensor in $V$ has rank two and multilinear rank $(2,2,2)$.  Moreover, note that the relation between orbits and their closures agrees with the fact, that orbit closures contain the orbit and possibly orbits of lower dimension.

Regarding Figure~\ref{pr:figOrbitsProjC2}, let us mention that the inclusions $S \subseteq Z_i$ are given via the product of $\prid_{\prP^1}$ with a Segre embedding $\prP^1 \times \prP^1 \hookrightarrow \prP^3$. Furthermore, one can prove that the $Z_i$ are the irreducible components of the singular locus of $\Delta$.

\medskip

We end the section by providing some geometric ideas in the style of Exercise~2 part~b). This illustrates how one can keep the six orbits of $\prP(V)$ geometrically apart. Let us start with some general thoughts on the pencils $\prP^1 \to \prP(\prC^{2 \times 2})$ induced by a tensor $t$. For this, write $t = \sum_{i,j,k} t_{ijk} \, a_i \otimes b_j \otimes c_k$ in coordinates. The first contraction of $t$ as a matrix is
	\begin{equation}\label{pr:eqPencilContraction}
	\begin{pmatrix}
	t_{000} & t_{001} & t_{010} & t_{011} \\ t_{100} & t_{101} & t_{110} & t_{111}
	\end{pmatrix}
	\end{equation}
where we equip $(\prC^a)^\vee$ with the dual basis $( \alpha_0, \alpha_1 )$ of $( a_0, a_1 )$. Using the identification $\prC^b \otimes \prC^c \cong \prC^{2 \times 2}$ the first contraction induces the pencil
	\begin{equation}\label{pr:eqPencilMap}
	\Pi \colon \prP^1 \dashrightarrow \prP \left( \prC^{2 \times 2} \right), \;(\alpha_0 : \alpha_1) \mapsto \left[ 
	\alpha_0 \begin{pmatrix} t_{000} & t_{001} \\ t_{010} & t_{011} \end{pmatrix}
	+ \alpha_1 \begin{pmatrix} t_{100} & t_{101} \\ t_{110} & t_{111} \end{pmatrix} \right] \, .
	\end{equation}
The dashed arrow indicates that $\Pi$ is in general only a rational function, since it may not be defined on certain points of $\prP^1$. 

If the first contraction of $t$ has rank one, then the rows of the matrix in \eqref{pr:eqPencilContraction} are linearly dependent. Comparing this with \eqref{pr:eqPencilMap} we conclude that the image of $\Pi$ collapses to a point in $\prP(\prC^{2 \times 2})$, because the two matrices in \eqref{pr:eqPencilMap} are linearly dependent. The latter also yields, that $\Pi$ is not everywhere defined. On the other hand, if the contraction has rank two then, for similar reasons, the image of $\Pi$ is a projective line in $\prP(\prC^{2 \times 2})$ and $\Pi$ is everywhere defined. These considerations also hold for the two other pencils coming from the second and third contraction of $t$.

Now, we examine the specific behaviour of the pencils for each orbit. To do so, we proceed in the same order as given in Tables~\ref{pr:tabOrbitsProj} and \ref{pr:tabRepresentativesC2} and choose the corresponding representatives. Let $Q \cong \prP^1 \times \prP^1$ be the Segre variety in $\prP(\prC^{2 \times 2})$. It has degree two and consists of the rank one matrices.

\medskip

\textbf{1.)} The multilinear rank is $(2,2,2)$, so all pencils are projective lines in $\prP(\prC^{2 \times 2})$. For the representative $a_0b_0c_0 + a_1b_1c_1$ all three contractions give the same pencil, namely
	\begin{align*}
	\Pi_1 \colon \prP^1 \to \prP \left( \prC^{2 \times 2} \right), \; (\alpha_0 : \alpha_1) \mapsto
	\left[ \begin{pmatrix} \alpha_0 & 0 \\ 0 & \alpha_1 \end{pmatrix}	\right] \, .
	\end{align*}
Considering the determinant $\alpha_0 \alpha_1$ shows that the image intersects $Q$ transversely in the two points $\Pi_1(1:0)$ and $\Pi_1(0:1)$. This corresponds to the orbit being disjoint from the vanishing locus of the hyperdeterminant $\Delta$.

\textbf{2.)} Again, the multilinear rank is $(2,2,2)$ and hence all pencils are projective lines. All three contractions of $a_0b_0c_1 + a_0b_1c_0 + a_1b_0c_0$ yield the pencil
	\begin{align*}
	\Pi_2 \colon \prP^1 \to \prP \left( \prC^{2 \times 2} \right), \; (\alpha_0 : \alpha_1) \mapsto
	\left[ \begin{pmatrix} \alpha_1 & \alpha_0 \\ \alpha_0 & 0 \end{pmatrix}	\right] \, .
	\end{align*}
Since the matrices in the image have determinant $- \alpha_0^2$, the projective line $\Pi_2(\prP^1)$ is tangent to $Q$ in the point $\Pi_2(0:1)$. Moreover, as indicated by the determinant this intersection has multiplicity two. All this amounts to the fact that the orbit is contained in $\Delta$.

\textbf{3.)} Here $(1,2,2)$ is the multilinear rank, so the first contraction induces a pencil that collapses to a point, while the other two pencils give rise to projective lines. Specifically, for the representative $a_0b_0c_0 + a_0b_1c_1$ the first contraction yields
	\begin{align*}
	\Pi_3 \colon \prP^1 \dashrightarrow \prP \left( \prC^{2 \times 2} \right), \; (\alpha_0 : \alpha_1) \mapsto
	\left[ \begin{pmatrix} \alpha_0 & 0 \\ 0 & \alpha_0 \end{pmatrix}	\right] \, ,
	\end{align*}
which is not defined for $(0:1)$. The image collapses to a point \emph{outside} $Q$, because the determinant is $\alpha_0^2 \neq 0$ for $\alpha_0 \neq 0$. This is in accordance with the fact that the orbit is disjoint from $\sigma_1 \big( \prP(\prC^a) \times Q^{bc} \big) \cong \prP^1 \times Q$, where $Q^{bc}$ denotes the Segre variety in $\prP \big( \prC^b \otimes \prC^c \big)$.
\\
Furthermore, the second and third contraction induce the pencil
	\begin{align*}
	\Pi_4 \colon \prP^1 \to \prP \left(\prC^{2 \times 2} \right), \; (\beta_0 : \beta_1) \mapsto
	\left[ \begin{pmatrix} \beta_0 & \beta_1 \\ 0 & 0 \end{pmatrix}	\right] \, .
	\end{align*}
Its image is completely contained \emph{in} the quadric $Q$, which is due  to the first factor $\prP^1 \cong \prP(\prC^a)$ of $\prP(\prC^a) \times \prP(\prC^b \otimes \prC^c)$.

The cases \textbf{4.)} and \textbf{5.)} are symmetric to the third orbit.

\textbf{6.)} In the case of rank one tensors all pencils collapse to a point as the multilinear rank is $(1,1,1)$. Taking $a_0b_0c_0$, all three contractions give the pencil
	\begin{align*}
	\Pi_6 \colon \prP^1 \dashrightarrow \prP \left(\prC^{2 \times 2} \right), \; (\alpha_0 : \alpha_1) \mapsto
	\left[ \begin{pmatrix} \alpha_0 & 0 \\ 0 & 0 \end{pmatrix}	\right] \, ,
	\end{align*}
which is not defined in $(0:1)$. The image lies always \emph{in} $Q$, which is due to the orbit being the Segre variety $\prP^1 \times \prP^1 \times \prP^1 \cong S \subseteq \prP(V)$.

\medskip

Finally, we conclude that the geometry of the pencils characterizes the corresponding orbit uniquely. Note that considering the representatives is enough, since, up to base change, all vectors of the respective orbit are of the same form as its representative and since the $G$-action preserves the investigated properties of the pencils. 



\section{Complexity of Matrix Multiplication}\label{pr:secComplexity}

In this section we will discuss the relationship between the asymptotic complexity of matrix multiplication and tensor (border) rank. Thereby, we provide a brief introduction to what has become a vast field of research, compare, e.g., the corresponding sections in the monographs \cite{prAlgComplTheoryBook}, \cite{prLandsbergBook} and \cite{prLandsbergComplexity17}. The focus will lie on early results such as \cite{prStrGaussianElimination69}, \cite{prBini2-7799} and \cite{prBini80}.

\medskip

We consider the bilinear form of matrix multiplication
\begin{align*}
\mu_n \colon \prC^{n \times n} \times \prC^{n \times n} \to \prC^{n \times n}, (A,B) \mapsto AB  \quad\text{with}\quad 
(AB)_{ik} = \sum_{j=1}^n A_{ij}B_{jk}\, .
\end{align*}
From the definition one can directly see that any of the $n^2$ entries of $AB$ can be computed with $n$ multiplications and $n-1$ additions. Hence, $\mu_n$ can be computed with $\prO(n^3)$ many operations. Nevertheless, this is just the very beginning of the story.

In his classical work \cite{prStrGaussianElimination69} from 1969 Strassen showed that $\mu_n$ can actually be computed with $\prO(n^{2.81})$ operations. At that time, the result was a big surprise! In fact, Strassen's initial goal was, ironically, to show that the standard computation is optimal with respect to the number of multiplications. This already fails in the case of $2 \times 2$ matrices: Strassen's algorithm from \cite{prStrGaussianElimination69}, presented in Example~\ref{pr:exM2} below, uses seven instead of eight multiplications for multiplying $2 \times 2$ matrices. Although the algorithm needs more additions than the standard one, it yields faster computation for \emph{large} matrices as follows. After filling up with zeros, we may assume that the two matrices have size $2^k \times 2^k$. Dividing both matrices into $2 \times 2$ block-matrices, each block of size $2^{k-1} \times 2^{k-1}$, we can apply Strassen's algorithm and proceed by recursion. The analysis in \cite{prStrGaussianElimination69} shows that, asymptotically, this method has complexity $n^{\log_2 7} \approx n^{2.81}$. We will recover this statement in Corollary~\ref{pr:corStrassen69} below.

To do so, let us introduce the standard measure for the asymptotic complexity of matrix multiplication.

\begin{defn}
The \textbf{exponent of matrix multiplication} $\omega$ is defined as
	\begin{align*}
	\omega := \inf \big\lbrace \tau \in \prR \mid \mu_n \text{ is computable in } \prO(n^\tau) \text{ arithmetic operations} \big\rbrace \, .
	\end{align*}
\end{defn}

By the above discussion we already know that $\omega \leq 2.81$. Moreover, since one has to compute $n^2$ matrix entries, it is easy to see that $2 \leq \omega$. It was a crucial observation by Strassen that $\omega$ is intimately related to the tensor rank of the following family of tensors.

\begin{defn}
Let $n\geq 1$ and let $\lbrace E_{i,j} \mid i,j=1,\ldots,n \rbrace$ be the standard basis of $\prC^{n \times n}$, i.e., the $(i,j)$ entry of $E_{i,j}$ is one and all other entries are zero. We define
	\begin{align*}
	M_{\langle n \rangle} := \sum_{i,j,k=1}^n E_{i,j} \otimes E_{j,k} \otimes E_{k,i} \; \in \prC^{n \times n} \otimes \prC^{n \times n} \otimes \prC^{n \times n} 
	\end{align*}
and call it the \textbf{tensor of matrix multiplication}.
\end{defn}

Identifying $\prC^{n \times n} \cong (\prC^{n \times n})^\vee$, the tensor $M_{\langle n \rangle}$ corresponds to the trilinear map $\varphi \colon (A,B,C) \mapsto \prtr(ABC)$. The definition is motivated by $\mu(E_{i,j}, E_{j,k}) = E_{i,k}$ and one usually switches $i$ and $k$ in the third tensor factor for convenience. Actually, this may also be justified as follows. The bilinear map $\mu$ corresponds to a tensor $(\prC^{n \times n} )^\vee \otimes ( \prC^{n \times n})^\vee \otimes \prC^{n \times n}$ and taking transposition (amounts to taking dual) in the third tensor factor gives the tensor in $(\prC^{n \times n})^\vee \otimes (\prC^{n \times n})^\vee \otimes (\prC^{n \times n})^\vee$ corresponding to $\varphi$.

\begin{remark}\label{pr:remRkMnMonotone}
For all $n \geq 2$ the tensor $M_{\langle n-1 \rangle}$ is a restriction of $M_{\langle n \rangle}$ (see Definition~\ref{pr:defnTensorRank}). To see this, let $\alpha \colon \prC^{n \times n} \to \prC^{(n-1) \times (n-1)}$ be the unique linear map that sends $E_{i,j} \in \prC^{n \times n}$ to $E_{i,j} \in \prC^{(n-1) \times (n-1)}$ if $i,j \neq n$ and to zero otherwise. Then $(\alpha \otimes \alpha \otimes \alpha)(M_{\langle n \rangle}) = M_{\langle n-1 \rangle}$ and hence $M_{\langle n-1 \rangle} \leq M_{\langle n \rangle}$. Therefore, we have 
	\begin{align*}
	\forall \, n\geq 2 \colon \quad \prrk(M_{\langle n-1 \rangle}) \leq \prrk(M_{\langle n \rangle})
	\end{align*}
by Remark~\ref{pr:remTensorRk}~d).
\end{remark}

The next theorem expresses the relation between $\omega$ and $M_{\langle n \rangle}$, that was observed by Strassen.

\begin{thm}[{implicitly in \cite{prStrGaussianElimination69}}]\label{pr:thmOmegaRk}
The exponent of the asymptotic complexity of $\prrk \left( M_{\langle n \rangle} \right)$ is equal to $\omega$, i.e.,
	\begin{align*}
	\omega = \inf \left\lbrace \tau \in \prR \mid \prrk \left( M_{\langle n \rangle} \right) \in \prO(n^\tau) \right\rbrace \, .
	\end{align*}
\end{thm}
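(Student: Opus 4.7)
I would prove the two inequalities separately, writing $\tau^\ast$ for the infimum on the right-hand side.

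For $\omega \leq \tau^\ast$ the idea is to turn a rank decomposition into a recursive algorithm. Fix $n_0$ and suppose $\prrk(M_{\langle n_0 \rangle}) \leq r$, with a witnessing decomposition $M_{\langle n_0 \rangle} = \sum_{\rho=1}^r a_\rho \otimes b_\rho \otimes c_\rho$. Reading this as a bilinear algorithm, we can compute $AB$ for $n_0 \times n_0$ matrices via the $r$ scalar products $P_\rho := \langle a_\rho, A\rangle \langle b_\rho, B\rangle$ followed by the linear combinations $(AB)_{k,i} = \sum_\rho (c_\rho)_{i,k} P_\rho$. The crucial point is the tensor-product identity $M_{\langle n_0 m\rangle} = M_{\langle n_0\rangle} \otimes M_{\langle m\rangle}$ (up to the canonical rearrangement of factors), which means the very same recipe works block-wise: to multiply $n_0^k \times n_0^k$ matrices, treat them as $n_0 \times n_0$ block matrices, use the same $r$ bilinear products, and recurse on the resulting $n_0^{k-1} \times n_0^{k-1}$ block multiplications. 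A straightforward recursion analysis yields $T(n_0^k) \leq r\, T(n_0^{k-1}) + C \cdot r \cdot n_0^{2k}$ for a constant $C$ depending on $n_0$, which unrolls to $T(N) = O(\max(r^k, N^2))$ with $N = n_0^k$. Given $\tau > \tau^\ast$ one can choose $n_0$ with $r \leq n_0^\tau$, so $T(N) = O(N^\tau)$ along powers of $n_0$; padding arbitrary input sizes to the next power of $n_0$ (losing only a constant factor) then gives $\omega \leq \tau$, and hence $\omega \leq \tau^\ast$.

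For $\tau^\ast \leq \omega$ suppose $\mu_n$ is computable by an arithmetic circuit (straight-line program over $\prC$) using $L = O(n^\tau)$ operations. The goal is to extract a bilinear algorithm of comparable size, since a bilinear algorithm with $R$ essential multiplications is exactly a rank-$R$ decomposition of $M_{\langle n \rangle}$. The standard tool is Strassen's homogenization: because every output of $\mu_n$ is bihomogeneous of bidegree $(1,1)$ in the entries of $A$ and $B$, one can replace every intermediate value in the circuit by the finitely many bigraded components that contribute to a bidegree-$(1,1)$ output and then eliminate all nonessential multiplications, obtaining a new circuit with $O(L)$ bilinear products of a linear form in $A$ and a linear form in $B$. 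This yields $\prrk(M_{\langle n\rangle}) = O(L) = O(n^\tau)$, so $\tau^\ast \leq \tau$ and taking $\tau \to \omega$ gives $\tau^\ast \leq \omega$.

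Taking the infimum on both sides finishes the proof. I expect the second direction to be the harder step: the reduction from a general arithmetic circuit to a purely bilinear algorithm without losing more than a constant factor is nontrivial, and I would want to invoke (rather than reproduce) the classical homogenization argument, for which a self-contained treatment can be found, for instance, in \cite{prAlgComplTheoryBook}. The first direction is bookkeeping, but one has to be careful to check that the additive cost in the recursion does not dominate, which is exactly what the $\max(r^k, N^2)$ bound captures and which is automatic because any algorithm for $\mu_n$ must touch all $n^2$ output entries, so the lower bound $\omega \geq 2$ is built in.
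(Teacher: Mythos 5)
Your two-inequality plan — recursive block multiplication from a rank decomposition for $\omega \leq \tau^\ast$, and Strassen's homogenization/bilinearization to extract a rank bound from an arbitrary arithmetic circuit for $\tau^\ast \leq \omega$ — is exactly the argument of \cite[Proposition~15.1]{prAlgComplTheoryBook}, which is all the paper does here (it defers the proof to that reference). Your sketch is correct, including the observation that the additive cost in the recursion never dominates because $\prrk(M_{\langle n_0\rangle}) > n_0^2$ (conciseness of $M_{\langle n_0\rangle}$ already gives this), so there is nothing to add beyond filling in the standard details you have already flagged.
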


\begin{proof}
For a detailed proof we refer to \cite[Proposition~15.1]{prAlgComplTheoryBook}.
\end{proof}

\begin{ex}\label{pr:exM2}
In the following we give a tensor decomposition, which will show $\prrk(M_{\langle 2 \rangle}) \leq 7$. In fact, Winograd \cite{prWinogradRankM2} has proven $\prrk(M_{\langle 2 \rangle}) = 7$. To improve readability we denote $E_{i,j} \in \prC^{2 \times 2}$ in the three tensor factors by $a_{ij}$, $b_{ij}$ and $c_{ij}$ respectively. For an easy comparison with Strassen's algorithm, we give a decomposition for $M'_{\langle 2 \rangle}$, the tensor obtained from acting on $M_{\langle 2 \rangle}$ by the transposition in the third tensor factor, i.e.,
	\begin{align*}
	M'_{\langle 2 \rangle}= \, &
\underbrace{a_{11}\otimes b_{11}\otimes c_{11}}_{1}
+\underbrace{ a_{12}\otimes b_{21}\otimes c_{11}}_{2}+\underbrace{a_{21}\otimes b_{11}\otimes c_{21}}_{3}+\underbrace{a_{22}\otimes b_{21}\otimes c_{21}}_{4}
\\
&
+
\underbrace{a_{11}\otimes b_{12}\otimes c_{12}}_{5}
+\underbrace{a_{12}\otimes b_{22}\otimes c_{12}}_{6}+\underbrace{a_{21}\otimes b_{12}\otimes c_{22}}_{7}+\underbrace{a_{22}\otimes b_{22}\otimes c_{22}}_{8} \, .
	\end{align*}
The corresponding decomposition for $M_{\langle 2 \rangle}$ can be obtained by applying again the transposition in the third factor. We have

\begin{align*}
M'_{\langle 2 \rangle} = \,&   \underbrace{(a_{11} + a_{22})\otimes(b_{11} + b_{22})\otimes (c_{11}+ c_{22})}_{\text{I}}
+\underbrace{(a_{21} + a_{22})\otimes b_{11}\otimes (c_{21} - c_{22})}_{\text{II}}
\\
&
+\underbrace{a_{11}\otimes (b_{12} - b_{22})\otimes (c_{12} + c_{22})}_{\text{III}} +\underbrace{a_{22} \otimes (- b_{11}+ b_{21})\otimes (c_{21} + c_{11})}_{\text{IV}}\\
&
+\underbrace{(a_{11} + a_{12}) \otimes b_{22} \otimes (- c_{11} + c_{12})}_{\text{V}}
+\underbrace{(-a_{11} + a_{21})\otimes (b_{11} + b_{12})\otimes c_{22}}_{\text{VI}}\\
&
+\underbrace{( a_{12} - a_{22})\otimes ( b_{21} + b_{22}) \otimes c_{11}}_{\text{VII}}.
\end{align*}

This decomposition is just Strassen's algorithm written as a tensor. To illustrate this, let now $A=(a_{ij}), B=(b_{ij}) \in \prC^{2 \times 2}$ and set $C = (c_{ij}) := AB$. Strassen's algorithm for multiplying $A$ and $B$ computes first
\begin{align*}
	\text{I} &= (a_{11} + a_{22})(b_{11} + b_{22}) &
	\text{II} &= (a_{21} + a_{22}) b_{11} \\
	\text{III} &= a_{11} (b_{12} - b_{22}) &
	\text{IV} &= a_{22} (- b_{11}+ b_{21}) \\
	\text{V} &= (a_{11} + a_{12})  b_{22} &
	\text{VI} &= (-a_{11} + a_{21}) (b_{11} + b_{12}) \\
	\text{VII} &= ( a_{12} - a_{22}) ( b_{21} + b_{22})
\end{align*}
and afterwards
\begin{align*}
	c_{11} &= \text{I} + \text{IV} - \text{V} + \text{VII} &
	c_{21} &= \text{II} + \text{IV} \\
	c_{12} &= \text{III} + \text{V} &
	c_{22} &= \text{I} - \text{II} + \text{III} + \text{VI} \, .
\end{align*}
The reader is encouraged to carefully compare the decomposition with the algorithm and to verify correctness.

There is also a classical decomposition of $M_{\langle 3 \rangle}$ due to Laderman \cite{prLadermanM3} showing $\prrk(M_{\langle 3 \rangle}) \leq 23$, see also \cite[Section~11.4]{prLandsbergBook}.\hfill\prExample

\end{ex}

Besides the result $\prrk(M_{\langle 2 \rangle})=7$, asking for concrete values of $\prrk(M_{\langle n \rangle})$ and $\prbrk(M_{\langle n \rangle})$ for small $n$ is challenging. Indeed, already the case $n=3$ is still open, for both rank and border rank. At least the border rank of $M_{\langle 2 \rangle}$ has been determined.

\begin{thm}[\cite{prLandsbergBorderRankM2}]
It holds that $\prbrk(M_{\langle 2 \rangle}) = 7$.
\end{thm}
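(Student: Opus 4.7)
The upper bound $\prbrk(M_{\langle 2 \rangle}) \leq 7$ is immediate from $\prbrk \leq \prrk$ combined with Strassen's decomposition in Example~\ref{pr:exM2}, which gives $\prrk(M_{\langle 2 \rangle}) \leq 7$. Hence the statement reduces to the lower bound $\prbrk(M_{\langle 2 \rangle}) \geq 7$, where the substantive content lies.

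The naive flattening $A^* \to B \otimes C$ of $M_{\langle 2 \rangle}$ is a $4 \times 16$ matrix of rank $4$, yielding only $\prbrk \geq 4$, so a finer invariant is required. I would employ a Koszul flattening in the style of Landsberg and Ottaviani. For $T \in A \otimes B \otimes C$ and an integer $p \geq 0$, this is the linear map
\begin{align*}
T_A^{\wedge p} \colon \Lambda^p A \otimes B^* \to \Lambda^{p+1} A \otimes C, \qquad \omega \otimes \beta \mapsto \sum_r \beta(b_r) \, (\omega \wedge a_r) \otimes c_r,
\end{align*}
where $T = \sum_r a_r \otimes b_r \otimes c_r$ is any decomposition into rank one terms. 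On a single rank one summand the map factors through wedging with a fixed vector of $A$, so its matrix rank equals $\binom{\dim A - 1}{p}$. Subadditivity gives $\prrk(T_A^{\wedge p}) \leq \binom{\dim A - 1}{p} \cdot \prrk(T)$, and since the locus of matrices of bounded rank is Zariski closed, the same inequality holds with $\prbrk(T)$ in place of $\prrk(T)$. Any lower bound on $\prrk(T_A^{\wedge p})$ is thereby converted into a border rank lower bound on $T$.

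Setting $T = M_{\langle 2 \rangle}$ and choosing $p = 1$ (so $\dim A = 4$), the map $T_A^{\wedge 1}$ has format $16 \times 24$, and the decisive calculation is to show that its rank strictly exceeds $3 \cdot 6 = 18$, whence $\prbrk(M_{\langle 2 \rangle}) \geq 7$. I would perform this computation by writing the matrix explicitly in the basis $\lbrace E_{i,j} \rbrace$ via $M_{\langle 2 \rangle} = \sum_{i,j,k} E_{i,j} \otimes E_{j,k} \otimes E_{k,i}$, and then exploiting the cyclic $\prZ/3$-symmetry of $M_{\langle n \rangle}$ under permutation of the three tensor factors, together with equivariance under the diagonal $\prGL_2 \times \prGL_2$-action on $A = B = C \cong \prC^{2 \times 2}$. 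These symmetries block-decompose the matrix into irreducible summands whose ranks can be evaluated separately.

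The principal obstacle is precisely this rank computation, which sits at the threshold of what flattening methods can achieve: one has to verify that the chosen invariant actually separates border rank $6$ from $7$. Should a single Koszul flattening fall short, the fallback is Landsberg's original argument in \cite{prLandsbergBorderRankM2}, which instead assumes for contradiction that $M_{\langle 2 \rangle}$ lies in the sixth secant variety of the Segre variety $\prP^3 \times \prP^3 \times \prP^3$ and rules out every possible limit of six rank one tensors by a detailed case analysis exploiting the rich stabilizer of $M_{\langle 2 \rangle}$ inside $\prGL_4 \times \prGL_4 \times \prGL_4$.
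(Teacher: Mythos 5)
The paper itself gives no proof of this theorem: it simply cites Landsberg's article and, in a subsequent remark, points to a shorter argument in \cite{prCHLNewLowerBounds}. So there is no ``paper proof'' to compare against, only the external references. With that said, your proposed Koszul-flattening route has a genuine and fatal gap.

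Your arithmetic is internally contradictory: you correctly compute that $T_A^{\wedge 1}$ for $M_{\langle 2\rangle}$ has format $16 \times 24$, but then ask to show its rank exceeds $18$. A $16 \times 24$ matrix has rank at most $16$, so this can never happen. More fundamentally, the best this flattening can deliver is $\prbrk \geq \lceil 16/3 \rceil = 6$, and in fact the Landsberg--Ottaviani Koszul flattening method (listed in section~4 of the paper) yields precisely $2n^2 - n$ as a lower bound, which for $n = 2$ is $6$. The same ceiling holds if you take $p = 2$ or flatten over a different mode, by the cyclic symmetry of $M_{\langle n\rangle}$. In short, no Koszul flattening can separate border rank $6$ from $7$ here; this separation was exactly the difficulty that required Landsberg's dedicated argument. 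You do correctly identify the actual mechanism in your final paragraph --- Landsberg's original proof assumes $M_{\langle 2\rangle}$ is a limit of six rank one tensors and rules out every possibility by exploiting the stabilizer, and \cite{prCHLNewLowerBounds} gives a shorter route via border apolarity --- but that paragraph is presented as a fallback rather than as the only viable approach, and it is not developed. The upper bound $\prbrk(M_{\langle 2\rangle}) \leq 7$ via Strassen's decomposition is fine.
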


A shorter proof of the latter result is contained in the recent preprint \cite[Section~5]{prCHLNewLowerBounds}.

Next, we head for exhibiting upper bounds on $\omega$ in terms of the (border) rank of $M_{\langle n \rangle}$ for \emph{fixed} $n$. In particular, we will recover Strassen's result $\omega \leq 2.81$.

\begin{defn}
Let $V_i$, $W_i$ for $i = 1,\ldots,d$ be finite dimensional $\prC$-vector spaces and choose $s \in W_1 \otimes \cdots \otimes W_d$, $t \in V_1 \otimes \cdots \otimes V_d$. Then we can form the tensor product $s \otimes t$ and we call it the \textbf{Kronecker product} of $s$ and $t$, when viewing it as a $d$-tensor in $V'_1 \otimes \cdots \otimes V'_d$ with $V'_i := W_i \otimes V_i$.
\end{defn}

\begin{remark}\label{pr:remKroneckerProduct}
Note that it is important how one views the tensor product $s \otimes t$. For example, the rank of $s \otimes t$ as the Kronecker product, i.e., as a $d$-tensor, may be strictly smaller than its rank as a $2d$-tensor in $V_1 \otimes \cdots \otimes V_d \otimes W_1 \otimes \cdots \otimes W_d$. To see this, consider $U,V,W,X = \prC^2$ and corresponding basis vectors $u_i,v_i,w_i,x_i$ for $i=1,2$. Then
	\begin{align*}
	\left( \sum_{i=1}^2 u_i \otimes v_i \right) \otimes (w_1 \otimes x_1) = u_1 \otimes v_1 \otimes w_1 \otimes x_1 + u_2 \otimes v_2 \otimes w_1 \otimes x_1
	\end{align*}
has rank one in $(U \otimes V)\otimes(W \otimes X)$, but not in $U \otimes V \otimes W \otimes X$ as it does not have multilinear rank $(1,1,1,1)$.
\end{remark}

The Kronecker product has the following useful properties.

\begin{lem}\label{pr:lemKroneckerProduct}
Let $t_1 \in V_1 \otimes \cdots \otimes V_d$, $t_2 \in W_1 \otimes \cdots \otimes W_d$ and consider their Kronecker product $t_1 \otimes t_2$. Then
	\begin{abclist}
	\item $\prrk(t_1 \otimes t_2) \leq \prrk(t_1) \prrk(t_2)$.
	
	\item Let $d=3$. If $t_1 \unlhd_{q_1} \langle r_1 \rangle$ and $t_2 \unlhd_{q_2} \langle r_2 \rangle$, then $t_1 \otimes t_2 \unlhd_{q_1 + q_2} \langle r_1 r_2 \rangle$.
	\end{abclist}
\end{lem}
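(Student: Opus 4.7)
The plan is to prove both parts by taking decompositions (exact for part~a, approximate for part~b) of $t_1$ and $t_2$ separately and multiplying them, using that the Kronecker product regroups the factors so that what was originally a tensor product of two vectors in $V_i$ and $W_i$ becomes a \emph{single} vector in $V'_i = V_i \otimes W_i$. This regrouping is the only genuinely non-formal step, and I would start by making it explicit: if $v \in V_i$ and $w \in W_i$, then $v \otimes w$ is, by definition of the Kronecker product, a decomposable factor in $V'_i$.

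For part~a), I would pick rank decompositions
\begin{align*}
t_1 = \sum_{i=1}^{r_1} v_{1,i} \otimes \cdots \otimes v_{d,i}, \qquad t_2 = \sum_{j=1}^{r_2} w_{1,j} \otimes \cdots \otimes w_{d,j}
\end{align*}
with $r_k = \prrk(t_k)$. Distributing the outer tensor product and regrouping factor by factor in $V'_k$ gives
\begin{align*}
t_1 \otimes t_2 = \sum_{i=1}^{r_1} \sum_{j=1}^{r_2} (v_{1,i} \otimes w_{1,j}) \otimes \cdots \otimes (v_{d,i} \otimes w_{d,j}),
\end{align*}
which is a sum of $r_1 r_2$ decomposable $d$-tensors in $V'_1 \otimes \cdots \otimes V'_d$. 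Hence $\prrk(t_1 \otimes t_2) \leq r_1 r_2$.

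For part~b), I would start from the defining equations
\begin{align*}
\pre^{q_1} t_1 + \pre^{q_1+1} s_1(\pre) &= \sum_{\rho=1}^{r_1} v_{1,\rho}(\pre) \otimes v_{2,\rho}(\pre) \otimes v_{3,\rho}(\pre),\\
\pre^{q_2} t_2 + \pre^{q_2+1} s_2(\pre) &= \sum_{\sigma=1}^{r_2} w_{1,\sigma}(\pre) \otimes w_{2,\sigma}(\pre) \otimes w_{3,\sigma}(\pre),
\end{align*}
and take their Kronecker product over $\prC[\pre]$. The left-hand side expands to
\begin{align*}
\pre^{q_1+q_2} (t_1 \otimes t_2) + \pre^{q_1+q_2+1}\, \tilde s(\pre),
\end{align*}
where $\tilde s(\pre) \in \prC[\pre] \otimes_\prC (V'_1 \otimes V'_2 \otimes V'_3)$ collects the three cross terms (which all carry at least one extra factor of $\pre$). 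The right-hand side, after regrouping the factors as in part~a), becomes
\begin{align*}
\sum_{\rho=1}^{r_1}\sum_{\sigma=1}^{r_2} \bigl(v_{1,\rho}(\pre) \otimes w_{1,\sigma}(\pre)\bigr) \otimes \bigl(v_{2,\rho}(\pre) \otimes w_{2,\sigma}(\pre)\bigr) \otimes \bigl(v_{3,\rho}(\pre) \otimes w_{3,\sigma}(\pre)\bigr),
\end{align*}
a sum of $r_1 r_2$ decomposable terms in $V'_1 \otimes V'_2 \otimes V'_3$ with polynomial coefficients in $\pre$. This is exactly the identity witnessing $t_1 \otimes t_2 \unlhd_{q_1 + q_2} \langle r_1 r_2 \rangle$.

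The main, and really only, obstacle is keeping the bookkeeping straight: one must be careful that all manipulations live in the correct tensor space $V'_1 \otimes \cdots \otimes V'_d$ rather than in the ``unregrouped'' $2d$-fold tensor space, as emphasized in Remark~\ref{pr:remKroneckerProduct}. Once the identifications $V'_i = V_i \otimes W_i$ are fixed, both parts reduce to bilinear expansion and counting terms.
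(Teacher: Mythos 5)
Your proof is correct and follows essentially the same route as the paper: for both parts you tensor the two given decompositions and regroup the factors into $V'_i = V_i \otimes W_i$, with the remainder term in part~b) collected exactly as $s(\pre) = s_1(\pre)\otimes t_2 + t_1\otimes s_2(\pre) + \pre\,s_1(\pre)\otimes s_2(\pre)$. Nothing to add.
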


\begin{proof}
For part a) choose rank decompositions
	\begin{align*}
	t_1 = \sum_{\rho=1}^{\prrk(t_1)} v_{1,\rho} \otimes \cdots \otimes v_{d,\rho} \quad \text{and} \quad
	t_2 = \sum_{\theta=1}^{\prrk(t_2)} w_{1,\theta} \otimes \cdots \otimes w_{d,\theta}
	\end{align*}
with $v_{i,\rho} \in V_i$, $w_{i,\theta} \in W_i$ for $i=1,\ldots,d$. Tensoring these decompositions yields
	\begin{align*}
	t_1 \otimes t_2 = \sum_{\rho=1}^{\prrk(t_1)} \sum_{\theta=1}^{\prrk(t_2)} \left( v_{1,\rho} \otimes w_{1,\theta} \right) \otimes \cdots \otimes \left( v_{d,\rho} \otimes w_{d,\theta} \right)
	\end{align*}
and hence $\prrk(t_1 \otimes t_2) \leq \prrk(t_1) \prrk(t_2)$.

To prove part b) choose representations as in equation~\eqref{pr:eqDegeneration}, i.e.,
	\begin{align*}
	\pre^{q_1} t_1 + \pre^{q_1+1}s_1(\pre) &= \sum_{\rho=1}^{r_1} v_{1,\rho}(\pre) \otimes v_{2,\rho}(\pre) \otimes v_{3,\rho}(\pre) \\
	\pre^{q_2} t_2 + \pre^{q_2+1}s_2(\pre) &= \sum_{\theta=1}^{r_2} w_{1,\theta}(\pre) \otimes w_{2,\theta}(\pre) \otimes w_{3,\theta}(\pre) \, ,
	\end{align*}
where $v_{i,\rho}(\pre) \in \prC[\pre] \otimes_{\prC} V_i$, $w_{i,\theta}(\pre) \in \prC[\pre] \otimes_{\prC} W_i$ and $s_1 \in \prC[\pre] \otimes_{\prC} (V_1 \otimes V_2 \otimes V_3)$, $s_2 \in \prC[\pre] \otimes_{\prC} (W_1 \otimes W_2 \otimes W_3)$. Forming the tensor product gives a representation
	\begin{align*}
	\pre^{q_1 + q_2} (t_1 \otimes t_2) + \pre^{q_1 + q_2 + 1} s(\pre) =
	\sum_{\rho=1}^{r_1} \sum_{\theta=1}^{r_2} \bigotimes_{i=1}^3 \big(\, v_{i,\rho}(\pre) \otimes w_{i,\theta}(\pre)\,\big)
	\end{align*}
with $s(\pre) = [s_1(\pre) \otimes t_2] + [t_1 \otimes s_2(\pre)] + \pre [s_1(\pre) \otimes s_2(\pre)]$. Therefore, we obtain the claim $t_1 \otimes t_2 \unlhd_{q_1 + q_2} \langle r_1 r_2 \rangle$.
\end{proof}

An immediate consequence of part a) is the following corollary.

\begin{cor}\label{pr:corRkKroneckerProd}
Let $t \in V_1 \otimes \cdots \otimes V_d$. For any $k \geq 1$ the Kronecker product $t^{\otimes k}$ satisfies $\prrk \left( t^{\otimes k} \right) \leq \prrk(t)^k$.
\end{cor}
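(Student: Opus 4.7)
The plan is to derive the corollary as a direct consequence of Lemma~\ref{pr:lemKroneckerProduct}~a) by induction on $k$. The base case $k=1$ is immediate, since $t^{\otimes 1} = t$ and the bound $\prrk(t) \leq \prrk(t)^1$ trivially holds.

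For the inductive step, suppose the inequality $\prrk\!\left(t^{\otimes (k-1)}\right) \leq \prrk(t)^{k-1}$ holds for some $k \geq 2$. I would then view $t^{\otimes k}$ as the Kronecker product of the $d$-tensors $t^{\otimes (k-1)} \in (V_1^{\otimes (k-1)}) \otimes \cdots \otimes (V_d^{\otimes (k-1)})$ and $t \in V_1 \otimes \cdots \otimes V_d$, so that the Kronecker product naturally lives in $V_1^{\otimes k} \otimes \cdots \otimes V_d^{\otimes k}$. Applying Lemma~\ref{pr:lemKroneckerProduct}~a) to these two tensors gives
\[
\prrk\!\left(t^{\otimes k}\right) = \prrk\!\left(t^{\otimes (k-1)} \otimes t\right) \leq \prrk\!\left(t^{\otimes (k-1)}\right)\cdot \prrk(t) \leq \prrk(t)^{k-1} \cdot \prrk(t) = \prrk(t)^k,
\]
which closes the induction.

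The only subtle point, which I would want to flag explicitly, is the grouping of tensor factors: as pointed out in Remark~\ref{pr:remKroneckerProduct}, the rank depends on how one views the factors, so one must be careful that in the identification $t^{\otimes k} = t^{\otimes(k-1)} \otimes t$ the $i$-th tensor slot on the left corresponds to the tensor product of the $i$-th slot of $t^{\otimes(k-1)}$ with the $i$-th slot of $t$. With this grouping convention in force, Lemma~\ref{pr:lemKroneckerProduct}~a) applies verbatim, and no further obstacle arises.
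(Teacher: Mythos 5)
Your proof is correct and is precisely the "immediate consequence of part a)" that the paper invokes without spelling out; the induction on $k$ with Lemma~\ref{pr:lemKroneckerProduct}~a) is the intended argument. Flagging the grouping convention for the factors of $t^{\otimes k}$ is a sensible precaution in light of Remark~\ref{pr:remKroneckerProduct}, but it does not represent a departure from the paper's route.
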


The tensor of matrix multiplication admits the following nice property.

\begin{lem}\label{pr:lemTensorProdOfMn}
If $n,m \in \prN$, then $M_{\langle n \rangle} \otimes M_{\langle m \rangle} = M_{\langle nm \rangle}$ for the Kronecker product of $M_{\langle n \rangle}$ and $M_{\langle m \rangle}$.
\end{lem}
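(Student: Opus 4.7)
The plan is to unfold the definition of the Kronecker product, swap the order of summation, and then re-identify tensor products of elementary matrices with elementary matrices of the larger size.

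First I would substitute the definitions:
\begin{align*}
M_{\langle n \rangle} \otimes M_{\langle m \rangle} = \sum_{i,j,k=1}^{n} \sum_{i',j',k'=1}^{m} (E_{i,j}\otimes E_{i',j'}) \otimes (E_{j,k}\otimes E_{j',k'}) \otimes (E_{k,i}\otimes E_{k',i'}),
\end{align*}
where the outer three tensor slots live in $(\prC^{n\times n}\otimes \prC^{m\times m})^{\otimes 3}$ and I interpret the result as an element of $(\prC^{nm\times nm})^{\otimes 3}$.

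The key identification is the standard Kronecker product of matrices: under the canonical isomorphism $\prC^{n\times n}\otimes \prC^{m\times m}\cong \prC^{nm\times nm}$ given by choosing the ordered basis $\{(p,p')\mid 1\le p\le n,\,1\le p'\le m\}$ for the rows (and likewise for the columns), the tensor $E_{a,b}\otimes E_{a',b'}$ is sent to the elementary matrix $E_{(a,a'),(b,b')}\in \prC^{nm\times nm}$. I would verify this on one factor and then note it is multilinear.

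Applying this identification slot-by-slot, each summand becomes
\begin{align*}
E_{(i,i'),(j,j')}\otimes E_{(j,j'),(k,k')}\otimes E_{(k,k'),(i,i')}.
\end{align*}
Relabeling the combined indices as $I:=(i,i')$, $J:=(j,j')$, $K:=(k,k')$, which jointly run over $\{1,\ldots,nm\}^3$ as $i,j,k$ and $i',j',k'$ range independently, the double sum collapses to
\begin{align*}
\sum_{I,J,K=1}^{nm} E_{I,J}\otimes E_{J,K}\otimes E_{K,I} = M_{\langle nm \rangle},
\end{align*}
which is the claim.

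The only nontrivial point is the bookkeeping of the identification $\prC^{n\times n}\otimes \prC^{m\times m}\cong \prC^{nm\times nm}$ being consistent across all three tensor factors, so that the ``middle'' indices $(j,j')$ of the first factor match the ``left'' indices of the second factor, and so on. Once the same ordering convention is fixed for all three positions, the indices pair up exactly as in the definition of $M_{\langle nm\rangle}$, and no further calculation is needed.
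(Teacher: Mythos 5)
Your proof is correct and follows exactly the route the paper hints at: the paper defers this Lemma to Exercise~4, whose hint is precisely to use the identification $\prC^{n\times n}\otimes\prC^{m\times m}\cong\prC^{nm\times nm}$. Your relabeling of paired indices $(i,i'),(j,j'),(k,k')$ as single indices in $\{1,\dots,nm\}$ is the clean way to make the index bookkeeping transparent, and the only caveat you flag --- using the same ordering convention in all three tensor slots --- is indeed the one thing to watch.
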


\begin{proof}
The proof is left as an exercise, see Exercise 4 below.
\end{proof}

With these properties of the Kronecker product we are now able to present two ways of finding upper bounds on $\omega$, namely Propositions~\ref{pr:propBoundOnOmega} and \ref{pr:propBoundBrkOmega}.

\begin{prop}\label{pr:propBoundOnOmega}
If $\prrk \left( M_{\langle n \rangle} \right) \leq r$, then $\omega \leq \log_n r$ respectively $n^\omega \leq r$.
\end{prop}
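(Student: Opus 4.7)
The plan is to combine the multiplicativity of rank under Kronecker products with the characterization of $\omega$ via $\prrk(M_{\langle n \rangle})$ from Theorem~\ref{pr:thmOmegaRk}. The key observation is that $M_{\langle n \rangle}$ can be iteratively ``tensored with itself'' to give matrix multiplication tensors for larger matrices, and this process controls the asymptotic rank growth.

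First I would iterate Lemma~\ref{pr:lemTensorProdOfMn} to obtain $M_{\langle n \rangle}^{\otimes k} = M_{\langle n^k \rangle}$ for every $k \geq 1$, where $\otimes$ denotes the Kronecker product. Combined with Corollary~\ref{pr:corRkKroneckerProd} and the hypothesis, this gives
\begin{align*}
\prrk\bigl(M_{\langle n^k \rangle}\bigr) = \prrk\bigl(M_{\langle n \rangle}^{\otimes k}\bigr) \leq \prrk\bigl(M_{\langle n \rangle}\bigr)^k \leq r^k \, .
\end{align*}
So along the subsequence $N = n^k$ we already have $\prrk(M_{\langle N \rangle}) \leq N^{\log_n r}$, which morally yields the desired bound.

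To upgrade from the subsequence $n^k$ to arbitrary $N \in \prN$, I would use Remark~\ref{pr:remRkMnMonotone}: since $M_{\langle m \rangle}$ is a restriction of $M_{\langle m' \rangle}$ whenever $m \leq m'$, the rank $\prrk(M_{\langle m \rangle})$ is monotone in $m$. Hence for any $N$, choosing $k := \lceil \log_n N \rceil$ gives $n^k \geq N$ and
\begin{align*}
\prrk\bigl(M_{\langle N \rangle}\bigr) \leq \prrk\bigl(M_{\langle n^k \rangle}\bigr) \leq r^k \leq r \cdot r^{\log_n N} = r \cdot N^{\log_n r} \, .
\end{align*}
Therefore $\prrk(M_{\langle N \rangle}) \in \prO\bigl(N^{\log_n r}\bigr)$.

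Finally, Theorem~\ref{pr:thmOmegaRk} immediately yields $\omega \leq \log_n r$, which is equivalent to $n^\omega \leq r$. I do not anticipate a serious obstacle: every tool is already in place, and the only minor subtlety is the passage from the geometric subsequence $n^k$ to arbitrary $N$, handled cleanly by the restriction monotonicity from Remark~\ref{pr:remRkMnMonotone}.
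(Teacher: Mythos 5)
Your proof is correct and follows exactly the same route as the paper's: the Kronecker-power identity from Lemma~\ref{pr:lemTensorProdOfMn}, submultiplicativity of rank from Corollary~\ref{pr:corRkKroneckerProd}, monotonicity of $\prrk(M_{\langle m \rangle})$ in $m$ from Remark~\ref{pr:remRkMnMonotone} to pass from the subsequence $n^k$ to arbitrary $N$, and then Theorem~\ref{pr:thmOmegaRk}. Nothing to add.
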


\begin{proof}
Corollary~\ref{pr:corRkKroneckerProd} for $t = M_{\langle n \rangle}$ combined with Lemma~\ref{pr:lemTensorProdOfMn} shows
	\begin{equation}\label{pr:eqBoundOnOmega}
	\prrk \left( M_{\langle n^k \rangle} \right) = \prrk \left( M_{\langle n \rangle}^{\otimes k} \right) \leq \prrk \left( M_{\langle n \rangle} \right)^k \leq r^k = n^{(\log_n r)k}
	\end{equation}
for all $k \geq 1$. Now, for an arbitrary $m \geq 1$, equation \eqref{pr:eqBoundOnOmega} with $k := \lceil \log_n m \rceil$ together with $\prrk(M_{\langle n_1 \rangle}) \leq \prrk(M_{\langle n_2 \rangle})$ for $n_1 \leq n_2$ (see Remark~\ref{pr:remRkMnMonotone})
gives
	\begin{align*}
	\prrk \left( M_{\langle m \rangle} \right) \leq \prrk \left( M_{\langle n^k \rangle} \right) \leq n^{(\log_n r) \lceil \log_n m \rceil} \leq n^{(\log_n r)(1 + \log_n m)} = r \, m^{\log_n r} \, .
	\end{align*}
We conclude $\prrk( M_{\langle m \rangle} ) \in \prO(m^{\log_n r})$ and Theorem~\ref{pr:thmOmegaRk} yields $\omega \leq \log_n r$.
\end{proof}

With the bound $\prrk(M_{\langle 2 \rangle}) \leq 7$ from Example~\ref{pr:exM2} we recover Strassen's classical result from 1969.

\begin{cor}[\cite{prStrGaussianElimination69}]\label{pr:corStrassen69}
It holds that $\omega \leq \log_2 7 \leq 2.81$.
\end{cor}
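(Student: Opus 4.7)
The plan is to apply Proposition~\ref{pr:propBoundOnOmega} directly with $n=2$ and the explicit bound $\prrk(M_{\langle 2 \rangle}) \leq 7$ from Example~\ref{pr:exM2} (Strassen's $2\times 2$ decomposition into seven rank-one terms). The proposition then yields $\omega \leq \log_n r = \log_2 7$, so the only remaining step is a numerical check that $\log_2 7 \leq 2.81$.

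More concretely, I would first invoke Example~\ref{pr:exM2}: the explicit seven-term decomposition of $M'_{\langle 2\rangle}$ (equivalently $M_{\langle 2\rangle}$, after a transposition in the third factor which is a $\prGL$-action and hence preserves tensor rank) certifies $\prrk(M_{\langle 2\rangle}) \leq 7$. Next I would plug $n=2$ and $r=7$ into Proposition~\ref{pr:propBoundOnOmega} to conclude $\omega \leq \log_2 7$. Finally I would note that $2^{2.81} = 2^2 \cdot 2^{0.81} > 4 \cdot 1.75 = 7$, so $\log_2 7 < 2.81$; alternatively $\log_2 7 = \log 7/\log 2 \approx 1.9459/0.6931 \approx 2.8074$.

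There is no real obstacle here: the work has already been done in Proposition~\ref{pr:propBoundOnOmega} (which combines Lemma~\ref{pr:lemTensorProdOfMn} on multiplicativity of $M_{\langle n\rangle}$ under the Kronecker product with Corollary~\ref{pr:corRkKroneckerProd} on submultiplicativity of rank, then passes from $n^k$ to arbitrary $m$ via Remark~\ref{pr:remRkMnMonotone}) and in Example~\ref{pr:exM2} (the explicit decomposition). The corollary is essentially a substitution. If anything, the only pedagogical point worth spelling out is \emph{why} having $\prrk(M_{\langle 2\rangle}) \leq 7$ as a single ``small'' fact is enough to control $\omega$ globally: the recursive Kronecker self-tensoring of Strassen's algorithm, encoded cleanly in equation~\eqref{pr:eqBoundOnOmega}, converts one bound at $n=2$ into the asymptotic statement. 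Thus the proof will be one or two lines.
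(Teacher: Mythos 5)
Your proposal is correct and is exactly the paper's argument: invoke $\prrk(M_{\langle 2\rangle}) \leq 7$ from Example~\ref{pr:exM2} and substitute $n=2$, $r=7$ into Proposition~\ref{pr:propBoundOnOmega} to get $\omega \leq \log_2 7 \approx 2.807 \leq 2.81$. The observation that the transposition in the third factor is a $\prGL$-action and therefore preserves rank (so the decomposition of $M'_{\langle 2\rangle}$ certifies the bound for $M_{\langle 2\rangle}$) is a nice clarification but does not change the route.
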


Remember from Section~\ref{pr:secRank} that the set of all tensors with border rank at most $r$ is Zariski-closed, while this fails in general for the rank. Thus, the notion of \emph{border rank} is better suited than \emph{rank} for geometric methods. Hence, it is good news that there are analogues of Proposition~\ref{pr:propBoundOnOmega} and Theorem~\ref{pr:thmOmegaRk} for border rank.

\begin{prop}[\cite{prBini80}]\label{pr:propBoundBrkOmega}
For all $n \geq 1$ one has $n^\omega \leq \prbrk\left(M_{\langle n \rangle}\right)$. In particular, a bound $\prbrk\left(M_{\langle n \rangle}\right) \leq r$ implies $\omega \leq \log_n r$.
\end{prop}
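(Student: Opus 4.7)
The plan is to piggy-back on the proof of Proposition~\ref{pr:propBoundOnOmega} by converting a border rank bound on $M_{\langle n \rangle}$ into an exact rank bound on its Kronecker powers. Concretely, assume $\prbrk(M_{\langle n \rangle}) \leq r$. Using the characterization of border rank in equation~\eqref{pr:eqSecondDefBorderRk}, fix $q \in \prN$ with $M_{\langle n \rangle} \unlhd_q \langle r \rangle$.

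The first step is to take a Kronecker power and then clear the approximation. Iterating Lemma~\ref{pr:lemKroneckerProduct}~b) gives $M_{\langle n \rangle}^{\otimes k} \unlhd_{kq} \langle r^k \rangle$, and Lemma~\ref{pr:lemTensorProdOfMn} identifies this Kronecker power with $M_{\langle n^k \rangle}$. Now Lemma~\ref{pr:lemApproxToExact} converts the approximate decomposition into an exact rank bound
\[ \prrk(M_{\langle n^k \rangle}) \leq (kq + 1)^2 \, r^k , \]
so the approximation order $q$ only enters through the polynomial factor $(kq+1)^2$.

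From here the argument mirrors the proof of Proposition~\ref{pr:propBoundOnOmega}. For arbitrary $m \geq 1$ I would set $k := \lceil \log_n m \rceil$ so that $n^k \geq m$; then Remark~\ref{pr:remRkMnMonotone} combined with the above bound yields
\[ \prrk(M_{\langle m \rangle}) \leq \prrk(M_{\langle n^k \rangle}) \leq (kq+1)^2 \, r^k \leq C \, (\log m)^2 \cdot m^{\log_n r} \]
for some constant $C = C(q, r, n)$. The polylogarithmic factor is absorbed into an arbitrarily small $\epsilon > 0$ in the exponent, so $\prrk(M_{\langle m \rangle}) \in \prO(m^{\log_n r + \epsilon})$, and Theorem~\ref{pr:thmOmegaRk} delivers $\omega \leq \log_n r + \epsilon$ for every such $\epsilon$. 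Letting $\epsilon \to 0$ and specializing to $r := \prbrk(M_{\langle n \rangle})$ gives $n^\omega \leq \prbrk(M_{\langle n \rangle})$.

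The conceptual key, and the only delicate point, is the step via Lemma~\ref{pr:lemApproxToExact}: without passing from approximate to exact algorithms one would only control the \emph{border} rank of $M_{\langle n^k \rangle}$, which cannot be plugged directly into Theorem~\ref{pr:thmOmegaRk}. The fact that this conversion costs only a factor polynomial in $q$ and $k$ is precisely what allows the overhead to be swallowed by an arbitrarily small shift in the exponent.
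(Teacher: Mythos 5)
Your proof is correct and uses the same core chain as the paper: fix $q$ with $M_{\langle n \rangle} \unlhd_q \langle r \rangle$, iterate Lemma~\ref{pr:lemKroneckerProduct}~b) together with Lemma~\ref{pr:lemTensorProdOfMn} to get $M_{\langle n^k \rangle} \unlhd_{kq} \langle r^k \rangle$, and convert to an exact rank bound $\prrk(M_{\langle n^k \rangle}) \leq (kq+1)^2 r^k$ via Lemma~\ref{pr:lemApproxToExact}. The only difference is the final bookkeeping: the paper simply applies Proposition~\ref{pr:propBoundOnOmega} (as a black box) to $M_{\langle n^k \rangle}$ to get $n^{k\omega} \leq (kq+1)^2 r^k$, then takes $k$-th roots and lets $k \to \infty$ so the polynomial factor vanishes; you instead re-derive the interpolation-to-arbitrary-$m$ argument from inside the proof of Proposition~\ref{pr:propBoundOnOmega} and absorb the polylog factor with an $\epsilon$-shift in the exponent. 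Both are valid, but the paper's route is a bit slicker precisely because it reuses the earlier proposition as a single invocation rather than unwinding it.
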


\begin{proof}
Fix $n$. Then there is some $q \in \prN$ such that $M_{\langle n \rangle} \unlhd_q \prbrk(M_{\langle n \rangle})$. Part b) of Lemma~\ref{pr:lemKroneckerProduct} in combination with Lemma~\ref{pr:lemTensorProdOfMn} implies
	\begin{align*}
	M_{\langle n^k \rangle} \unlhd_{kq} \left\langle \, \prbrk \left( M_{\langle n \rangle} \right)^k \, \right\rangle
	\end{align*}
for all integers $k \geq 1$. Thus, by Lemma~\ref{pr:lemApproxToExact}
	\begin{align*}
	\prrk\left( M_{\langle n^k \rangle} \right) \leq (kq + 1)^2 \prbrk \left( M_{\langle n \rangle} \right)^k
	\end{align*}
and applying Proposition~\ref{pr:propBoundOnOmega} to the latter gives
	\begin{align*}
	n^{k \omega} \leq (kq + 1)^2 \prbrk \left( M_{\langle n \rangle} \right)^k \, , \quad \text{hence } \quad
	n^{\omega} \leq \sqrt[k]{(kq + 1)^2} \,\prbrk \left( M_{\langle n \rangle} \right)
	\end{align*}
for any $k \geq 1$. Letting $k$ tend to infinity we conclude $n^\omega \leq \prbrk( M_{\langle n \rangle} )$. The second part of the statement follows from the monotonicity of the logarithm.
\end{proof}

In \cite{prBini2-7799} {Bini et al.\ implicitly} observed that $M_{\langle 12 \rangle} \unlhd_4 1000$ and therefore $\prbrk(M_{\langle 12 \rangle}) \leq 1000$, although the notion of border rank was not yet defined at this time. They used the former to deduce the following bound on $\omega$, which is for us a direct consequence of Proposition~\ref{pr:propBoundBrkOmega}.

\begin{cor}[\cite{prBini2-7799}]
We have $\omega \leq \log_{12} 1000 < 2.78$.
\end{cor}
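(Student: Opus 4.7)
The plan is to apply Proposition~\ref{pr:propBoundBrkOmega} directly with $n = 12$, using as input the bound $\prbrk(M_{\langle 12 \rangle}) \leq 1000$ that was attributed to Bini et al. Substituting into the proposition gives $12^\omega \leq \prbrk(M_{\langle 12 \rangle}) \leq 1000$, and taking logarithms base $12$ yields $\omega \leq \log_{12} 1000$. This is the first of the two claimed inequalities, and it is essentially a one-line invocation of the previously established machinery; no additional tensor-theoretic content is needed.

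The second inequality $\log_{12} 1000 < 2.78$ is a purely numerical verification, and this is the only part that requires any actual estimate. The plan is to rephrase it as $1000 < 12^{2.78}$, or equivalently $3 < 2.78 \cdot \log_{10} 12$. Using $\log_{10} 12 = 2\log_{10} 2 + \log_{10} 3$ together with standard estimates $\log_{10} 2 \approx 0.30103$ and $\log_{10} 3 \approx 0.47712$ gives $\log_{10} 12 \approx 1.07918$, hence $2.78 \cdot \log_{10} 12 \approx 3.0001 > 3$, which establishes the strict inequality (though quite narrowly).

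The only thing I would be careful about is the hidden bookkeeping: the proposition requires the degree of approximation $q$ in the degeneration $M_{\langle 12 \rangle} \unlhd_4 \langle 1000 \rangle$ to be finite (here $q=4$), but this is automatic from the fact that the border rank is finite, and more importantly the proposition itself takes care of any dependence on $q$ in the limiting argument (the $\sqrt[k]{(kq+1)^2}$ factor tends to $1$). Thus there is no real obstacle, and the corollary is essentially a packaged numerical consequence of the preceding proposition combined with the input border-rank bound.
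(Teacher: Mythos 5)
Your proposal is correct and matches the paper exactly: the paper states that this corollary ``is for us a direct consequence of Proposition~\ref{pr:propBoundBrkOmega}'' applied to Bini et al.'s bound $\prbrk(M_{\langle 12 \rangle}) \leq 1000$, which is precisely what you do. The numerical check that $\log_{12} 1000 < 2.78$ is also sound (it holds narrowly, since $\log_{12}1000 \approx 2.7799$).
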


Furthermore, Proposition~\ref{pr:propBoundBrkOmega} enables us to prove the counterpart of Theorem~\ref{pr:thmOmegaRk} for the \emph{border} rank of $M_{\langle n \rangle}$.

\begin{thm}[\cite{prBini80}]\label{pr:thmOmegaBrk}
The exponent of the asymptotic complexity of $\prbrk \left( M_{\langle n \rangle} \right)$ is equal to the exponent of matrix multiplication, i.e.,
	\begin{align*}
	\omega = \inf \left\lbrace \tau \in \prR \mid \prbrk \left( M_{ \langle n \rangle} \right) \in \prO(n^\tau) \right\rbrace \, .
	\end{align*}
\end{thm}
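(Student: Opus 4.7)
The plan is to prove the two inequalities $\omega \leq \omega'$ and $\omega' \leq \omega$, where I abbreviate $\omega' := \inf \lbrace \tau \in \prR \mid \prbrk(M_{\langle n \rangle}) \in \prO(n^\tau) \rbrace$.

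For the inequality $\omega' \leq \omega$, I would simply combine Theorem~\ref{pr:thmOmegaRk} with the trivial bound $\prbrk(t) \leq \prrk(t)$. Concretely, if $\tau \in \prR$ satisfies $\prrk(M_{\langle n \rangle}) \in \prO(n^\tau)$, then automatically $\prbrk(M_{\langle n \rangle}) \in \prO(n^\tau)$, so every element of the set defining $\omega$ belongs to the set defining $\omega'$. Taking the infimum yields $\omega' \leq \omega$.

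For the inequality $\omega \leq \omega'$, the input is exactly Proposition~\ref{pr:propBoundBrkOmega}, which tells us $n^\omega \leq \prbrk(M_{\langle n \rangle})$ for every $n \geq 1$. Fix any $\tau$ with $\prbrk(M_{\langle n \rangle}) \in \prO(n^\tau)$, so there exists $C > 0$ and $n_0$ such that $\prbrk(M_{\langle n \rangle}) \leq C n^\tau$ for $n \geq n_0$. Then
\begin{align*}
n^\omega \leq \prbrk(M_{\langle n \rangle}) \leq C n^\tau \quad \text{for all } n \geq n_0,
\end{align*}
and taking logarithms gives $\omega \leq \tau + \frac{\log C}{\log n}$. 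Letting $n \to \infty$ yields $\omega \leq \tau$, so $\omega$ is a lower bound for the set defining $\omega'$, hence $\omega \leq \omega'$.

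Neither direction is difficult here; the hard work was already carried out in Proposition~\ref{pr:propBoundBrkOmega}, whose proof used Lemmas~\ref{pr:lemApproxToExact}, \ref{pr:lemKroneckerProduct} and \ref{pr:lemTensorProdOfMn} to pass from approximate computations of $M_{\langle n \rangle}$ to exact computations of $M_{\langle n^k \rangle}$ at the cost of a polynomial factor $(kq+1)^2$ that washes out in the $k$-th root. Given that result, the present theorem is a short packaging step. I would therefore present the argument as the two-line squeeze above and explicitly invoke Theorem~\ref{pr:thmOmegaRk} and Proposition~\ref{pr:propBoundBrkOmega} as the only nontrivial ingredients.
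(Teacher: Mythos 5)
Your proposal is correct and follows exactly the same two-inequality squeeze as the paper: the direction $\omega' \leq \omega$ via Theorem~\ref{pr:thmOmegaRk} and $\prbrk \leq \prrk$, and the direction $\omega \leq \omega'$ via Proposition~\ref{pr:propBoundBrkOmega}. The paper states the second direction in one line; you spell out the harmless limiting argument, but the content is identical.
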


\begin{proof}
Set $\alpha := \inf \lbrace \tau \in \prR \mid \prbrk ( M_{ \langle n \rangle} ) \in \prO(n^\tau) \rbrace$. The inequality $\omega \geq \alpha$ follows from Theorem~\ref{pr:thmOmegaRk} and $\prrk(M_{\langle n \rangle}) \geq \prbrk(M_{\langle n \rangle})$. Conversely, $n^\omega \leq \prbrk(M_{\langle n \rangle})$ for all $n \geq 1$ (Proposition~\ref{pr:propBoundBrkOmega}) yields $\omega \leq \alpha$.
\end{proof}

Summarizing, we have seen how to express and to upper bound the exponent $\omega$ of matrix multiplication via the (border) rank of $M_{\langle n \rangle}$. Although the bound $\omega < 2.78$ due to $\prbrk(M_{\langle 12 \rangle}) \leq 1000$ is not a huge improvement of Strassen's result $\omega \leq 2.81$, the border rank turned out to be more powerful. Namely, Sch\"onhage \cite{prSchoenhageTauTheorem} generalized Proposition~\ref{pr:propBoundBrkOmega} to his Asymptotic Sum Inequality (also called $\tau$-theorem) and was thus able to prove $\omega < 2.55$. Two further landmarks are Strassen's laser method ($\omega < 2.48$) from \cite{prStrRelBilinearComplexity87} and its probabilistic refinement ($\omega < 2.376$) by Coppersmith and Winograd \cite{prCoppersmithWinograd2-38}. During the last three decades the progress almost stopped and the current state of the art is $\omega < 2.374$ due to Le Gall \cite{prLeGall14}. For additional historical details the interested reader may consult \cite[Section~15.13]{prAlgComplTheoryBook} or the survey \cite{prStrEuropeanCongress}.

On the other hand, there is an interest for finding concrete lower bounds for the complexity of matrix multiplication. For this, general lower bounds on $\prbrk( M_{\langle n \rangle} )$ have been studied:
\begin{enumerate}
	\item Strassen \cite{prStrassen83}: $\frac{3}{2}n^2 \leq \prbrk(M_{\langle n \rangle})$
	\item Lickteig \cite{prLickteig84}: $\frac{3}{2}n^2 + \frac{1}{2} n - 1 \leq \prbrk(M_{\langle n \rangle})$
	\item B\"urgisser-Ikenmeyer\footnote{Compared to the other mentioned works, the article \cite{prBI13} provides a completely different method of proof. The lower bound is obtained by exhibiting so-called obstructions, a representation-theoretic notion coming from Geometric Complexity Theory.} \cite{prBI13}: $\frac{3}{2} n^2 - 2\leq \prbrk(M_{\langle n \rangle})$
	\item Landsberg-Ottaviani \cite{prLanOttav}: $2n^2 - n \leq \prbrk(M_{\langle n \rangle})$
	\item Landsberg-Michalek \cite{prLandsbergMichalek}: $2n^2 - \log n -1 \leq \prbrk(M_{\langle n \rangle})$
	\end{enumerate}
However, all these lower bounds cannot improve the trivial bound $2 \leq \omega$ for the asymptotic complexity! Actually, as a special case of Conjecture~\ref{pr:conjConciseTight} below, there is the following astonishing conjecture.

\begin{conj}\label{pr:conjOmega2}
$\omega = 2$.
\end{conj}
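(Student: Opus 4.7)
Since $\omega = 2$ is one of the most famous open problems in algebraic complexity, I can only sketch a strategy in the spirit of the techniques developed above rather than a genuine proof. The trivial lower bound $2 \leq \omega$ is already in hand, so everything reduces to producing matching upper bounds. By Theorem~\ref{pr:thmOmegaBrk} it suffices to prove $\prbrk(M_{\langle n \rangle}) \leq n^{2+o(1)}$ as $n \to \infty$; equivalently, as will be formalized in the last section of the paper, that the asymptotic rank $\praR(M_{\langle 2 \rangle})$ equals $4$. Thus the whole problem can be reformulated, following Strassen, as a question about a single fixed tensor $M_{\langle 2 \rangle}$: show that its $k$-th Kronecker power has (border) rank $4^{k(1+o(1))}$.

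The most developed route is the \emph{laser method}. The plan is to fix an auxiliary tensor $T$ of small border rank whose support has a rich combinatorial structure, and to exploit the fact that a large Kronecker power $T^{\otimes k}$ can be degenerated, by zeroing out coordinates along the lines of Lemma~\ref{pr:lemKroneckerProduct}, into a disjoint direct sum of many independent small matrix multiplication tensors $M_{\langle a,b,c \rangle}$. Sch\"onhage's $\tau$-theorem (a direct-sum generalization of Proposition~\ref{pr:propBoundBrkOmega}) then converts such a degeneration into an upper bound on $\omega$, governed by a combinatorial optimization over the support of $T$. Concretely I would attempt to analyze the Coppersmith--Winograd tensors $\mathrm{cw}_q$, or a carefully chosen replacement with extra symmetry, at very high tensor powers, and to drive the resulting optimization value down to $2$.

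The central obstacle is not computational but conceptual: the barrier results of Ambainis--Filmus--Le~Gall and of Alman--Vassilevska~Williams, together with the asymptotic spectrum analyses of Christandl--Vrana--Zuiddam, prove that no laser-method analysis of the Coppersmith--Winograd tensor, nor of broad families of tensors with similar combinatorial type, can reach $\omega = 2$. A genuine proof therefore seems to demand one of three things: an entirely new starting tensor escaping all known barriers; a direct structural argument that $\praR(M_{\langle 2 \rangle}) = 4$, for instance via Strassen's Asymptotic Rank Conjecture applied to a tight tensor dominating $M_{\langle 2 \rangle}$; or an instance of the group-theoretic framework of Cohn--Umans, exhibiting a finite group together with three subsets satisfying the triple product property at parameters that force $\omega = 2$. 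Each of these is known to be at least as hard as the conjecture itself, and the algebraic machinery collected in the preceding sections really only tells us where to aim, not how to get there — which is why $\omega = 2$ is stated as a conjecture rather than as a theorem.
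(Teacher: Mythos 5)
This statement is labeled a \emph{conjecture} in the paper and is left unproved there; you correctly recognized that no proof exists and declined to fabricate one, which is exactly the right call. Your survey of the surrounding landscape is accurate and matches the paper's own framing: the paper likewise reduces $\omega = 2$ to Strassen's Asymptotic Rank Conjecture (Conjecture~\ref{pr:conjConciseTight}), and in fact you can apply that conjecture directly to $M_{\langle n \rangle}$ itself rather than to some other tight tensor ``dominating'' it, since the paper shows $M_{\langle n \rangle}$ is concise and tight; then Theorem~\ref{pr:thmAsymptRankMn} gives $\praR(M_{\langle n \rangle}) = n^\omega$, so $\praR(M_{\langle n \rangle}) = n^2$ would force $\omega = 2$. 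The discussion of laser-method barriers and Cohn--Umans is beyond what the paper covers but is correct as context and rightly presented as an obstacle rather than as progress toward a proof.
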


\emph{That is, asymptotically matrix multiplication is conjectured to be nearly as easy as matrix addition!}

Although $\omega = 2$ would be very astounding, it seems to be believed by some experts, and it is considered to be equiprobable to $\omega > 2$ by other experts. Nevertheless, let us point out the discrepancy between the \emph{theoretic} measure $\omega$ and real world implementations. 
Already for upper bounds on $\omega$ the hidden constant in the $\prO$-notation may be so large, that a corresponding algorithm would just be impractical. To stress this, we explain the meaning of Conjecture~\ref{pr:conjOmega2}. For any $\delta > 0$ there is a constant $c(\delta) > 0$ such that computing $\mu_n$ needs (at most) $c(\delta)n^{2 + \delta}$ arithmetic operations. Not only that $c(\delta)$ may be huge, it may also tend \textit{very fast} to infinity for $\delta \to 0$.

In \cite[Section~4]{prLandsbergComplexity17} this is circumvented by introducing the notion $\omega_{prac,k}$, which does \emph{not} contain a hidden constant. There, $\omega_{prac,k}$ is used to study the complexity of multiplying matrices of size at most $k \times k$ and for further details we refer to \cite{prLandsbergComplexity17}.

When it comes to algorithms used in practice, there are in fact only few known to beat Strassen's algorithm from 1969. Regarding concrete implementations and papers studying practical issues we refer to \cite{prBBfastParallelMatrixMult}, \cite{prBodrato}, \cite{prANadaptiveStrassen}, \cite{prHJJTTimplementation}, \cite{prSmirnov13} and the references therein.

\medskip

Concluding this section, the following important problems remain open.
\begin{numlist}
 \item Determine the exponent of matrix multiplication. In particular, is $\omega = 2$?
 \item Compute $\prrk \left( M_{\langle n \rangle} \right)$ and $\prbrk \left( M_{\langle n \rangle} \right)$ for some small $n \geq 3$.
\end{numlist}



\section{Symmetric Tensors and Symmetric Rank}\label{pr:secSymmetric}

In the case $V_1 = \ldots = V_d$ we can study the symmetry of tensors in $V_1 \otimes \cdots \otimes V_d$. Let $V$ be an $n$-dimensional $\prC$-vector space and denote the symmetric group of $\lbrace 1,\ldots,d \rbrace$ by $\mathfrak{S}_d$. Of course, $\mathfrak{S}_d$ acts linearly on $V^{\otimes d}$ by permuting the tensor factors, i.e., on decomposable tensors the action is given by
	\begin{align*}
	\sigma \cdot (v_1 \otimes \cdots \otimes v_d) := v_{\sigma(1)} \otimes \cdots \otimes v_{\sigma(d)}
	\end{align*}
for $\sigma \in \mathfrak{S}_d$ and $v_1,\ldots,v_d \in V$.

\begin{defn}
A tensor $t \in V^{\otimes d}$ is called \textbf{symmetric}, if $\sigma \cdot t = t$ for all $\sigma \in \mathfrak{S}_d$, i.e., $t$ is $\mathfrak{S}_d$-invariant. The subspace of symmetric tensors of $V^{\otimes d}$ is denoted by $\prSym^d(V)$.
\end{defn}

\begin{remark}\label{pr:remSymmetricTensor} Let $e_1,\ldots,e_n$ be a basis of $V$ and write
	\begin{align*}
	t = \sum_{i_1,\ldots,i_d} t_{i_1,\ldots,i_d} \; e_{i_1} \otimes \cdots \otimes e_{i_d} \in V^{\otimes d} \, .
	\end{align*}
Then
	\begin{abclist}
	\item $t \in \prSym^d(V)$ if and only if $t_{i_1,\ldots,i_d} = t_{\sigma(i_1), \ldots, \sigma(i_d)}$ for all $\sigma \in \mathfrak{S}_d$ and all $i_1,\ldots,i_d \in \lbrace 1,\ldots, n \rbrace$.
	
	\item Part a) allows us to identify $\prSym^d(V)$ with the space of  homogeneous polynomials of degree $d$ on $V^{\vee}$. Often we will do this implicitly.
	\item The \textbf{\textit{symmetrization}} of $t$,
	\begin{align*}
	t^{\prsym} := \sum_{i_1,\ldots,i_d} \left( \frac{1}{d!} \sum_{\sigma \in \mathfrak{S}_d} t_{\sigma(i_1),\ldots,\sigma(i_d)} \right) e_{i_1} \otimes \cdots \otimes e_{i_d} \, ,
	\end{align*}
	is a symmetric tensor and $t \in \prSym^d(V)$ if and only if $t=t^{\prsym}$.
	\end{abclist}
\end{remark}

\begin{defn}
Let $t \in \prSym^d(V)$. The \textbf{symmetric rank} or \textbf{Waring rank} of $t$ is
	\begin{align*}
	\prsrk(t) := \min \left\lbrace r \;\Big\vert\; t = \sum_{i=1}^r (l_i)^d \, , \; l_i \in \prSym^1 V \right\rbrace \, .
	\end{align*}
Similarly to the border rank, we define the \textbf{border symmetric rank} of $t$ as
	\begin{align*}
	\prbsrk(t) := \min \left\lbrace r \; \Big\vert \; t \text{ is a limit of } t_i \in \prSym^d(V) \text{ with } \prsrk(t_i) = r \right\rbrace \, .
	\end{align*}
\end{defn}

The (border) symmetric rank is an important notion with many applications, see, e.g., \cite[Part 3]{prLandsbergBook}. Regarding the exponent $\omega$ of matrix multiplication, Theorem~\ref{pr:thmChilo} below is an analogue of Theorems~\ref{pr:thmOmegaRk} and \ref{pr:thmOmegaBrk} in the ``symmetric world''. Namely, we can characterize $\omega$ also by the (border) symmetric rank, when considering the \emph{symmetrization} of the tensors $M_{\langle n \rangle}$, $n \geq 1$.

\begin{remark}\label{pr:remSrk} Let $t \in \prSym^d(V)$.
	\begin{abclist}
	\item It holds that $\prbsrk(t) \leq \prsrk(t)$, $\prrk(t) \leq \prsrk(t)$ and $\prbrk(t) \leq \prbsrk(t)$.
	
	\item Let $\nu_d \colon \prP(V) \to \prP(\prSym^d (V)), \; [v] \mapsto [v^d]$ be the Veronese embedding. Then $\prsrk(t)=1$ if and only if $t \neq 0$ is in the affine cone over the Veronese variety $\nu_d(\prP(V))$. 
	
	\item If $W$ is a $\prC$-vector space containing $V$, then $\prSym^d(V) \subseteq \prSym^d(W)$ and the symmetric rank of $t$ is independent of viewing $t$ as an element in $\prSym^d(V)$ or in $\prSym^d(W)$. To see this, we consider $t$ as a homogeneous polynomial of degree~$d$, i.e., $t \in R_d$ for $R := \prC[x_1,\ldots,x_n]$, and let $y$ be an additional variable. Then any decomposition of $t$ into a sum of powers of linear forms in $R$ is also such a decomposition in $R[y]$. \\
	Conversely,  if there are linear forms $l_1,\ldots,l_r \in R[y]_1$ such that
	\begin{align*}
	t(x_1,\ldots,x_n) = \sum_{i=1}^r \big( \, l_i(x_1,\ldots,x_n,y) \, \big)^d \, ,
	\end{align*}
	then setting $y=0$ gives on the right hand side a sum of $r$ powers of linear forms in $R$.
	\end{abclist}
\end{remark}

\begin{ex}
For $V = \prC^2$ consider the homogeneous polynomial $t = 3x^2y$. It corresponds to the symmetric tensor $x \otimes x \otimes y + x \otimes y \otimes x + y \otimes x \otimes x$, which we already encountered in Example~\ref{pr:exWstate}. We already know $\prrk(t)=3$ and $\prbrk(t)=2$. Actually, the computation for $\prbrk(t)=2$ also shows $\prbsrk(t)=2$ as
	\begin{align*}
	w = \lim_{\prEps \to 0} \; \frac{1}{\prEps} \left[ (x+ \prEps y)^3 - x^3 \right] \, .
	\end{align*}
Moreover, we have the decomposition
\begin{align*}
t = 3x^2y = \frac{1}{2} (x+y)^3 - \frac{1}{2}(x-y)^3 -y^3
\end{align*}
and therefore $\prsrk(t) \leq 3$ and with $\prrk(t)=3$ we conclude $\prsrk(t) = 3$. The latter can also be seen with Theorem~\ref{pr:ThmSrkMonomial} below.\hfill\prExample
\end{ex}

\begin{ex}
Another example for a strict inequality $\prbsrk(t) < \prsrk(t)$ is the following. Consider $V= \prC^2$ and $t := x^3 + 3 x^2y$. By Exercise 5, $t$ is not the sum of two cubes, hence $\prsrk(t) \geq 3$. On the other hand, $t$ is the limit of polynomials, that are sums of two cubes, namely
	\begin{align*}
	t &= \lim_{\prEps \to 0} \; \frac{1}{\prEps} \left[ (\prEps-1)x^3 + (x+\prEps y)^3 \right] \\
	&= \lim_{\prEps \to 0} \; \frac{1}{\prEps} \left[ \prEps x^3 + 3 \prEps x^2y + 3 \prEps^2 xy^2 + \prEps^3y^3 \right]
	\end{align*}
Therefore $\prbsrk(t)=2$.\hfill\prExample
\end{ex}

\begin{remark}
The inequality $\prrk(t) \leq \prsrk(t)$ for $t \in \prSym^d (V)$ may be strict as well. An astounding example with $d=3$ and $n=800$ variables, where $\prrk(t) \leq 903$ and $\prsrk(t) = 904$, is due to Shitov \cite{prShitov18}.
\end{remark}



\section{Apolarity Theory}\label{pr:secApolarity}

This section introduces Apolarity Theory, which already dates back to works of Sylvester from 1851, see \cite{prSylvester1}, \cite{prSylvester2} and \cite{prSylvester3}. It can be used to compute the symmetric rank of some homogeneous polynomials as we shall see in Section~\ref{pr:secExamples}. The main tool for this is the Apolarity Lemma, which we state in a ``reduced version'' in Theorem~\ref{pr:thmRedApolarity} and in its ``scheme version'' in Theorem~\ref{pr:thmSchemeApolarity}. For further details on Apolarity Theory the reader is referred to the literature, e.g., \cite{prIKbook} and \cite{prRSvarietiesOfSums}.

Let $V$ be an $n+1$ dimensional $\prC$-vector space and denote its dual by $V^\vee$. We also consider the symmetric algebras $S= \prC[x_0,\ldots, x_n] := \prSym (V)$ and $T= \prC[\partial_0,\ldots,\partial_n] := \prSym (V^{\vee})$. As the labeling of the variables suggests we let $T$ act linearly on $S$ by formal differentiation. This action will be indicated by a dot, e.g., $g \cdot f$ for $f \in S$ and $g \in T$. We note that $g_1 \cdot (g_2 \cdot f) = (g_1 g_2) \cdot f$ for all $f \in S$ and all $g_1,g_2 \in T$. Moreover, given multi-indices $\alpha = (\alpha_0,\ldots,\alpha_n) \in (\prZ_{\geq 0})^{n+1}$ and $\beta = (\beta_0,\ldots,\beta_n) \in (\prZ_{\geq 0})^{n+1}$ we introduce the shortcuts
	\begin{align*}
	\partial^\alpha := \partial_0^{\alpha_0} \partial_1^{\alpha_1} \cdots \partial_n^{\alpha_n}
	\quad \text{ and } \quad x^\beta := x_0^{\beta_0} x_1^{\beta_1} \cdots x_n^{\beta_n} 
	\end{align*}
as well as
	\begin{align*}
	&\vert \alpha \vert := \sum_{i=0}^n \alpha_i \, , \quad
	\alpha! := \prod_{i=0}^n \alpha_i ! \quad \text{ and } \quad
	{{d}\choose{\alpha}} := \frac{d!}{\alpha !} = \frac{d!}{\alpha_0 ! \cdots \alpha_n !} \, ,
	\end{align*}
where in the latter $d = \vert \alpha \vert$.

\begin{lem}\label{pr:lemDiff}
Let $\alpha$ and $\beta$ be multi-indices with $\vert \alpha \vert = \vert \beta \vert$, then
	\begin{align*}
	\partial^\alpha \cdot x^\beta = \begin{cases}
    \alpha!      & \quad \text{if } \alpha = \beta\\
    0  & \quad \text{if } \alpha \neq \beta
  \end{cases}
	\end{align*}
\end{lem}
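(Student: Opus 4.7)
The plan is to reduce the multi-variable computation to independent single-variable computations, exploiting the fact that $\partial_i$ acts only on $x_i$ (and not on $x_j$ for $j \neq i$) and that distinct partial derivatives commute.

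First I would observe that, since the operators $\partial_0,\ldots,\partial_n$ pairwise commute and each $\partial_i$ annihilates every $x_j$ with $j \neq i$, the action factorizes as
\begin{equation*}
\partial^\alpha \cdot x^\beta \;=\; \prod_{i=0}^n \bigl( \partial_i^{\alpha_i} \cdot x_i^{\beta_i} \bigr).
\end{equation*}
Thus it suffices to analyze each one-variable factor $\partial_i^{\alpha_i} \cdot x_i^{\beta_i}$.

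Next I would carry out the single-variable computation by induction (or direct iteration) on $\alpha_i$: one shows that $\partial_i^{\alpha_i} \cdot x_i^{\beta_i} = \frac{\beta_i!}{(\beta_i - \alpha_i)!}\, x_i^{\beta_i - \alpha_i}$ when $\alpha_i \leq \beta_i$, and $\partial_i^{\alpha_i} \cdot x_i^{\beta_i} = 0$ when $\alpha_i > \beta_i$ (since once the exponent is lowered to $0$, further differentiation kills the constant). In particular, when $\alpha_i = \beta_i$ the factor is $\alpha_i!$.

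Finally I would combine the two halves. If $\alpha = \beta$, every factor equals $\alpha_i!$ and the product is $\prod_i \alpha_i! = \alpha!$. If $\alpha \neq \beta$, the hypothesis $|\alpha| = |\beta|$ forces the existence of some index $i$ with $\alpha_i > \beta_i$ (otherwise $\alpha_i \leq \beta_i$ for all $i$, which together with equality of sums gives $\alpha = \beta$); for that index the corresponding factor vanishes, hence so does the product. No step here is a genuine obstacle; the only point requiring minimal care is the last combinatorial observation that $|\alpha|=|\beta|$ plus $\alpha \neq \beta$ produces an index where $\alpha$ strictly exceeds $\beta$.
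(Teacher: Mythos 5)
Your proof is correct and follows essentially the same route as the paper's: the paper likewise computes $\partial^\alpha \cdot x^\alpha = \alpha!$ directly and, for $\alpha \neq \beta$, invokes the same combinatorial observation that $|\alpha| = |\beta|$ forces some index $j$ with $\alpha_j > \beta_j$, which kills the product. You have simply spelled out the factorization into single-variable factors and the one-variable computation, steps the paper compresses into ``by the rules of formal differentiation.''
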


\begin{proof}
Clearly, $\partial^\alpha \cdot x^\alpha = \alpha!$ by the rules of formal differentiation. On the other hand, if $\alpha \neq \beta$ then $\vert \alpha \vert = \vert \beta \vert$ yields some $j$ such that $\alpha_j > \beta_j$. The latter implies $\partial^\alpha \cdot x^\beta = 0$.
\end{proof}

Let us point out a direct consequence of Lemma~\ref{pr:lemDiff}. The bilinear map
	\begin{equation}\label{pr:eqDualPair}
	\prSym^d(V^\vee) \times \prSym^d(V) \to \prC, \quad (g,f) \mapsto g \cdot f
	\end{equation}
is a dual pairing. It gives an isomorphism $\prSym^d(V^\vee) \cong \big( \prSym^d(V) \big)^\vee$ under which $(\partial^{\alpha})_{\vert \alpha \vert = d}$ becomes the dual basis of $\big( (\alpha!)^{-1} x^{\alpha}\big)_{\vert \alpha \vert = d}$.

\begin{lem}\label{pr:lemLinearForm}
Let $g \in T_d = \prSym^d (V^\vee)$ and let $l = \sum_{i=0}^n c_i x_i \in S_1 = \prSym^1 (V) = V$, where $c_i \in \prC$. Then $g \cdot l^d = d! \, g(c_0,c_1,\ldots,c_n)$.
\end{lem}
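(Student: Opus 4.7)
The approach is a direct computation, reducing to monomial generators and applying the multinomial theorem together with Lemma~\ref{pr:lemDiff}. Both sides of the claimed identity are $\prC$-linear in $g$, so it suffices to verify the equality for the basis elements $g = \partial^\alpha$ with $|\alpha| = d$.

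The plan is the following. First I would expand $l^d$ by the multinomial theorem:
\begin{align*}
l^d = \left( \sum_{i=0}^n c_i x_i \right)^d = \sum_{|\beta|=d} \binom{d}{\beta} c^\beta \, x^\beta,
\end{align*}
where $c^\beta := c_0^{\beta_0} \cdots c_n^{\beta_n}$. Applying $\partial^\alpha$ termwise and invoking Lemma~\ref{pr:lemDiff} collapses the sum to the single term $\beta = \alpha$, giving
\begin{align*}
\partial^\alpha \cdot l^d = \binom{d}{\alpha} c^\alpha \cdot \alpha! = \frac{d!}{\alpha!} \cdot \alpha! \cdot c^\alpha = d! \, c^\alpha.
\end{align*}
The final step is to recognize that $c^\alpha$ is exactly the value of the polynomial $\partial^\alpha = \partial_0^{\alpha_0} \cdots \partial_n^{\alpha_n} \in T$ evaluated at the point $(c_0, \ldots, c_n) \in \prC^{n+1}$, i.e.\ $c^\alpha = \partial^\alpha(c_0, \ldots, c_n)$. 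Hence $\partial^\alpha \cdot l^d = d! \, \partial^\alpha(c_0, \ldots, c_n)$ for every multi-index $\alpha$ with $|\alpha| = d$.

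Finally I would extend the identity to an arbitrary $g \in T_d$ by linearity: writing $g = \sum_{|\alpha|=d} a_\alpha \partial^\alpha$, the action on $l^d$ and evaluation at $(c_0,\ldots,c_n)$ are both $\prC$-linear operations in $g$, so the per-monomial equality propagates to the identity $g \cdot l^d = d! \, g(c_0,\ldots,c_n)$. There is no real obstacle here; the only bookkeeping subtlety is keeping straight the dual roles played by $\partial^\alpha$, namely as a differential operator acting on $S$ and simultaneously as a polynomial function on $V$ via the identification $T = \prSym(V^\vee)$. This is exactly the point at which the factor $d!$ emerges, coming from the multinomial coefficient $\binom{d}{\alpha}$ multiplied by $\alpha!$ from Lemma~\ref{pr:lemDiff}.
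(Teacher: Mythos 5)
Your proof is correct and follows essentially the same route as the paper's: expand $l^d$ by the multinomial theorem, apply Lemma~\ref{pr:lemDiff} to collapse the pairing to diagonal terms, and observe that the multinomial coefficient times $\alpha!$ yields $d!$. The only cosmetic difference is that you explicitly isolate the reduction to monomial $g$ before invoking linearity, whereas the paper carries the general $g = \sum_\alpha g_\alpha \partial^\alpha$ through the computation in a single display.
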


\begin{proof}
The multinomial theorem gives
	\begin{align*}
	l^d = \sum_{\vert \beta \vert =d} {{d}\choose{\beta}} c^\beta x^\beta \quad \text{ and we can write } \quad g = \sum_{\vert \alpha \vert = d} g_\alpha \partial^\alpha
	\end{align*}
with $g_\alpha \in \prC$. Applying Lemma~\ref{pr:lemDiff} we conclude
	\begin{align*}
	g \cdot l^d = \sum_{\vert \alpha \vert = d} g_\alpha c^\alpha \alpha! {{d} \choose{\alpha}} = \sum_{\vert \alpha \vert = d} g_\alpha c^\alpha d! = d! \, g(c_0,c_1,\ldots,c_n) \, ,
	\end{align*}
which is the claim.
\end{proof}

\begin{defn}
The \textbf{annihilator} or \textbf{apolar ideal} of $f \in \prSym^d(V)$ is the homogeneous ideal
	\begin{align*}
	\prAnn(f) := f^\perp := \big\lbrace g \in \prSym(V^\vee) \mid g \cdot f = 0 \big\rbrace
	\end{align*}
of $\prSym(V^\vee)$. Its $d$-th homogeneous part $(f^\perp)_d$ is called the \textbf{socle} of $f^\perp$. Moreover, as $f^\perp$ is homogeneous, we can consider the graded ring
	\begin{align*}
	A_f := \prSym(V^\vee) / (f^\perp) = \bigoplus_{e=0}^\infty \, \prSym^e(V^\vee) / (f^\perp)_e \, ,
	\end{align*}
which is called the \textbf{apolar ring} of $f$.
\end{defn}

The notation $A_f$ is quite common in the literature. To avoid confusion, let us point out that the apolar ring is \emph{not} related to localization at all.

\begin{remark} Let $f \in S_d = \prSym^d(V)$, $f \neq 0$.
	\begin{abclist}
	\item The socle $(f^\perp)_d$ has codimension one in the $\prC$-vector space $\prSym^d(V^\vee)$.
	\item If $k > d$, then $(f^\perp)_k = \prSym^k(V^\vee)$.
	\item By part b) the graded $\prC$-algebra $A_f$ is Artinian, because
	$(A_f)_k = 0$ for all $k > d$ and $(A_f)_e$ is finite dimensional for all $e \leq d$.
	\end{abclist}
\end{remark}

The following proposition will be needed to prove the Apolarity Lemma, Theorem~\ref{pr:thmSchemeApolarity}.

\begin{prop}\label{pr:propSocle}
Let $f \in \prSym^d (V)$. The apolar ideal $f^\perp$ is determined by its socle $(f^\perp)_d$, namely for all $e \le d$
	\begin{align*}
	(f^\perp)_e = \left[ (f^\perp)_d : \prm^{d-e} \right]_e :=
	\left\lbrace g \in T_e \mid \forall h \in \prm^{d-e} \colon\, (gh) \cdot f = 0  \right\rbrace ,
	\end{align*}
where $\prm := (\partial_0, \ldots, \partial_n)$ is the irrelevant ideal of $T$.
\end{prop}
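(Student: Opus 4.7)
The plan is to prove both inclusions separately, with the essential ingredient being the non-degeneracy of the natural pairing $T_k \times S_k \to \prC$ supplied by Lemma~\ref{pr:lemDiff}.

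For the inclusion $(f^\perp)_e \subseteq \left[(f^\perp)_d : \prm^{d-e}\right]_e$, I would use that the action of $T$ on $S$ satisfies $(gh) \cdot f = h \cdot (g \cdot f)$, which follows from the fact that $T$ is a commutative ring acting by differential operators. Hence if $g \in (f^\perp)_e$, then for \emph{any} $h \in T$ (in particular $h \in \prm^{d-e}$) we have $(gh)\cdot f = h \cdot (g \cdot f) = h \cdot 0 = 0$, so $g \in \left[(f^\perp)_d : \prm^{d-e}\right]_e$.

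For the reverse inclusion, suppose $g \in T_e$ satisfies $(gh) \cdot f = 0$ for every $h \in \prm^{d-e}$. The element $g \cdot f$ lies in $S_{d-e}$, and I want to show it is zero. Since $\prm^{d-e} = \bigoplus_{k \geq d-e} T_k$ contains $T_{d-e}$, the hypothesis applied to arbitrary $h \in T_{d-e}$ yields
\begin{align*}
h \cdot (g \cdot f) = (gh) \cdot f = 0 \quad \text{for all } h \in T_{d-e}.
\end{align*}
Now I invoke Lemma~\ref{pr:lemDiff}: writing $g \cdot f = \sum_{|\beta|=d-e} c_\beta \, x^\beta$ with $c_\beta \in \prC$, applying $\partial^\alpha$ for a fixed multi-index $\alpha$ with $|\alpha|=d-e$ gives $\partial^\alpha \cdot (g \cdot f) = \alpha! \, c_\alpha = 0$, so every coefficient $c_\alpha$ vanishes and $g \cdot f = 0$. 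Thus $g \in (f^\perp)_e$, completing the proof.

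The main (and only) obstacle is conceptual rather than technical: one has to recognize that $\left[(f^\perp)_d : \prm^{d-e}\right]_e$ only involves the degree-$(d-e)$ part of $\prm^{d-e}$, because higher-degree pieces produce elements in degree $> d$ which are automatically in $f^\perp$; after this observation the argument reduces to the perfect pairing between $T_{d-e}$ and $S_{d-e}$, which is already essentially Lemma~\ref{pr:lemDiff}. Everything else is just bookkeeping about the $T$-action.
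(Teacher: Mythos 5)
Your proof is correct and follows essentially the same route as the paper's: the forward inclusion is the ideal property of $f^\perp$ (which you unpack via commutativity of $T$), and the reverse inclusion tests $g \cdot f$ against all $\partial^\alpha$ with $|\alpha|=d-e$ and invokes Lemma~\ref{pr:lemDiff} to conclude every coefficient vanishes. The only cosmetic difference is that you explicitly note $\prm^{d-e} = \bigoplus_{k \geq d-e} T_k$, which the paper leaves implicit.
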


\begin{proof}
Since $f^\perp$ is an ideal, the inclusion $(f^\perp)_e \subseteq [ (f^\perp)_d : \prm^{d-e} ]_e$ follows immediately. Conversely, for $g \in [ (f^\perp)_d : \prm^{d-e}]_e$ we have $(g \, \partial^\alpha) \cdot f = \partial^\alpha \cdot (g \cdot f) = 0$ for all multi-indices $\alpha$ with $\vert \alpha \vert = d-e$. Together with Lemma~\ref{pr:lemDiff} this implies that all coefficients of $g \cdot f \in S_{d-e}$ are zero. Thus $g \cdot f = 0$, i.e., $g \in (f^\perp)_e$.
\end{proof}

The next proposition is equivalent to saying that $A_f$ is a \emph{Gorenstein} Artinian ring.

\begin{prop}
Let $f \in \prSym^d(V)$ and $e \in \lbrace0,1,\ldots,d \rbrace$. The multiplication
	\begin{align*}
	(A_f)_e \times (A_f)_{d-e} \to (A_f)_d \cong \prC
	\end{align*}
is a perfect pairing. In particular, $\dim_{\prC} (A_f)_e = \dim_{\prC} (A_f)_{d-e}$.
\end{prop}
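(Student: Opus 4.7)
The plan is to deduce the perfect pairing directly from Proposition~\ref{pr:propSocle}, which was set up for exactly this purpose. First I would verify that $(A_f)_d \cong \prC$: the linear map $T_d \to \prC$, $g \mapsto g \cdot f$ is well-defined since $g \cdot f \in S_0 = \prC$, its kernel is $(f^\perp)_d$, and it is non-zero because if $f = \sum_{\vert \alpha \vert = d} f_\alpha x^\alpha$ has some $f_\alpha \neq 0$, then by Lemma~\ref{pr:lemDiff}, $\partial^\alpha \cdot f = \alpha! \, f_\alpha \neq 0$. Hence $(f^\perp)_d$ has codimension one in $T_d$ (which was already stated in the excerpt), and the multiplication $T_e \times T_{d-e} \to T_d$ descends to a well-defined bilinear map $(A_f)_e \times (A_f)_{d-e} \to (A_f)_d \cong \prC$ because $(f^\perp)$ is an ideal.

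Next I would show non-degeneracy in one slot. Let $\varphi_e \colon (A_f)_e \to \prHom\bigl((A_f)_{d-e}, \prC\bigr)$ denote the induced map $[g] \mapsto \bigl([h] \mapsto [gh]\bigr)$. Take $[g] \in (A_f)_e$ with $\varphi_e([g]) = 0$; this means $gh \in (f^\perp)_d$ for every $h \in T_{d-e}$, i.e.\ $g \in \bigl[(f^\perp)_d : \prm^{d-e}\bigr]_e$. By Proposition~\ref{pr:propSocle} this forces $g \in (f^\perp)_e$, so $[g] = 0$. Therefore $\varphi_e$ is injective.

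By the symmetric argument (swap the roles of $e$ and $d-e$), the analogous map $\varphi_{d-e} \colon (A_f)_{d-e} \to \prHom\bigl((A_f)_{e}, \prC\bigr)$ is also injective. Taking dimensions gives
\[
\dim_{\prC} (A_f)_e \leq \dim_{\prC} (A_f)_{d-e} \leq \dim_{\prC} (A_f)_e,
\]
so equality holds, both $\varphi_e$ and $\varphi_{d-e}$ are isomorphisms, and the pairing is perfect.

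I do not anticipate a real obstacle: the whole statement is essentially a packaging of Proposition~\ref{pr:propSocle}. The only small point requiring attention is noting that the pairing is well-defined on the quotients (immediate since $f^\perp$ is an ideal) and that one-sided non-degeneracy plus finite-dimensionality of the graded pieces automatically yields the dimension equality and the other-sided non-degeneracy.
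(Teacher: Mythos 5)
Your proof is correct and follows essentially the same route as the paper: establish one-sided non-degeneracy via Proposition~\ref{pr:propSocle}, then invoke the $e \leftrightarrow d-e$ symmetry and finite-dimensionality to conclude the pairing is perfect. You spell out a few steps the paper leaves implicit (the verification that $(A_f)_d \cong \prC$ and the dimension-counting that converts two-sided injectivity into isomorphism), but the underlying argument is identical.
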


\begin{proof}
We write $[g]$ for the equivalence class of $g \in T$ in $A_f = T / (f^\perp)$. By symmetry, it is enough to show that the pairing is non-degenerate in one component. Let $[t] \in (A_f)_e$ with $[tu] = 0$ in $(A_f)_d$ for all $[u] \in (A_f)_{d-e}$. In particular, $tu \in (f^\perp)_d$ for all $u \in \prm^{d-e} \subseteq T_{d-e}$, i.e., $t \in [(f^\perp)_d : \prm^{d-e}]_e$. Finally, Proposition~\ref{pr:propSocle} implies $t \in (f^\perp)_e$, i.e., $[t] = 0$ in $(A_f)_e$.
\end{proof}

\begin{ex}
Let $f = x^\alpha \in S$ be a monomial for some multi-index $\alpha$. Then
	\begin{align*}
	f^\perp = \left( \partial_0^{\alpha_0 + 1}, \ldots, \partial_n^{\alpha_n + 1} \right) \;\; \text{ and } \;\;
	A_f = \prC[\partial_0,\ldots,\partial_n] / (\partial_0^{\alpha_0 + 1}, \ldots, \partial_n^{\alpha_n + 1}) \, .
	\end{align*}
Since $A_f$ has Krull dimension zero and is generated by homogeneous elements of degree one, it holds that $\deg A_f = \dim_{\prC} A_f = (\alpha_0 + 1)(\alpha_1 + 1) \cdots (\alpha_n + 1)$.\hfill\prExample
\end{ex}

Next, we turn to the main result of this section, the Apolarity Lemma. It was a smart idea due to Sylvester to link the differential operators killing $f$ with the decompositions of $f$ as sum of powers of linear forms. 

To formulate the statement, recall that we identify $\prSym^d(V^\vee)$ with the space of homogeneous polynomials on $V$, compare Remark~\ref{pr:remSymmetricTensor}~b).  Therefore, we view $\prSym(V^\vee)$ as the homogeneous coordinate ring of $\prP(V)$. For a closed subscheme $Z \subseteq \prP(V)$ let $I_Z \subseteq \prSym(V^\vee)$ denote the unique saturated ideal corresponding to $Z$; it is the vanishing ideal of $Z$.

Now, we state the reduced version of the Apolarity Lema. It is instructive to also reformulate it in a down-to-earth way for binary forms.

\begin{thm}[Apolarity Lemma, reduced version]\label{pr:thmRedApolarity} \ \\
Let $Z = \lbrace [l_1],\ldots,[l_k] \rbrace \subseteq \prP(V)$ be a subscheme of closed reduced points with vanishing ideal $I_Z \subseteq \prSym(V^\vee)$. Then, for $f \in \prSym^d (V)$,
	\begin{align*}
	I_Z \subseteq f^\perp \quad \Leftrightarrow \quad
	\exists \, c_i \in \prC \colon \; f  = \sum_{i=1}^k c_i l_i^d \, .
	\end{align*}
\end{thm}

\begin{thm}[Reduced Apolarity Lemma for binary forms]\label{pr:thmRedApolarityBinary} \ \\
Let $f \in \prC[x,y]_d$ and pick distinct $(\alpha_i : \beta_i) \in \prP^1$ for $i=1,\ldots,k$. Then
	\begin{align*}
	\prod_{i=1}^k (\beta_i \partial_x - \alpha_i \partial_y) \cdot f = 0 \quad \Leftrightarrow \quad \exists \, c_i \in \prC \colon \; f = \sum_{i=1}^k c_i (\alpha_i x + \beta_i y)^d \, .
	\end{align*}
\end{thm}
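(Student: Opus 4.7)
The plan is to reduce both implications to linear algebra on the graded pieces $S_d$ and $T_d$, using that the apolarity pairing $T_m \times S_m \to \prC$ induced by the action is perfect (a direct consequence of Lemma~\ref{pr:lemDiff}).

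The implication ($\Leftarrow$) is a direct computation to dispatch first: for each $i$, one has $(\beta_i \partial_x - \alpha_i \partial_y) \cdot (\alpha_i x + \beta_i y)^d = d(\beta_i \alpha_i - \alpha_i \beta_i)(\alpha_i x + \beta_i y)^{d-1} = 0$, and since the $k$ factors of $g := \prod_{i=1}^k(\beta_i \partial_x - \alpha_i \partial_y)$ commute, $g$ annihilates every summand $c_i(\alpha_i x + \beta_i y)^d$ and hence their sum.

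For the reverse direction, let $W \subseteq S_d$ be the linear span of the $k$ powers $(\alpha_i x + \beta_i y)^d$, and consider $\phi_g \colon S_d \to S_{d-k}$, $h \mapsto g \cdot h$. The ($\Leftarrow$) direction already yields $W \subseteq \ker \phi_g$; the task is equality. If $k > d$ then $\phi_g$ is the zero map and the hypothesis is vacuous: in this case any $d+1$ of the given powers are linearly independent (by the argument below) and hence span all of $S_d$, so one produces a decomposition of $f$ by setting the remaining coefficients to zero. I therefore focus on $k \leq d$ and aim to show $\dim W = k = \dim \ker \phi_g$.

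The equality $\dim W = k$ follows from the distinctness of the $(\alpha_i : \beta_i)$: this forces $\beta_i \alpha_j - \alpha_i \beta_j \neq 0$ for $i \neq j$, so the operator $(\beta_i \partial_x - \alpha_i \partial_y)$ sends $(\alpha_j x + \beta_j y)^d$ to a non-zero scalar multiple of $(\alpha_j x + \beta_j y)^{d-1}$. Applying $\prod_{i \neq j}(\beta_i \partial_x - \alpha_i \partial_y)$ to any relation $\sum_i c_i(\alpha_i x + \beta_i y)^d = 0$ then isolates $c_j$ up to a non-zero factor (killing every term with $i \neq j$ and reducing the $j$-th to a non-zero multiple of $c_j (\alpha_j x + \beta_j y)^{d-k+1}$), forcing $c_j = 0$. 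The equality $\dim \ker \phi_g = k$ I would obtain via duality: under the perfect apolarity pairing, the transpose of $\phi_g$ is the multiplication map $\mu_g \colon T_{d-k} \to T_d$, $h \mapsto hg$, as witnessed by the identity $(hg) \cdot f = h \cdot (g \cdot f)$. Because $T$ is an integral domain and $g \neq 0$, $\mu_g$ is injective, so $\phi_g$ is surjective and $\dim \ker \phi_g = (d+1)-(d-k+1) = k$. Combining both counts gives $W = \ker \phi_g$, so any $f$ with $g \cdot f = 0$ lies in $W$ and admits the desired Waring decomposition. The main delicate point is the identification of $\phi_g$ with the transpose of $\mu_g$ under the apolarity pairing; once this is in hand the remainder is standard linear algebra plus the elementary fact that multiplication by a non-zero element of a polynomial ring is injective.
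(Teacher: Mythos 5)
Your proof is correct, and it takes a genuinely different route from the paper. The paper never gives a standalone proof of this statement: it first proves the scheme-theoretic Apolarity Lemma (Theorem~\ref{pr:thmSchemeApolarity}) via the Veronese embedding and the socle result Proposition~\ref{pr:propSocle}, then records the reduced version (Theorem~\ref{pr:thmRedApolarity}) as a special case, and finally presents the binary statement as a coordinate reformulation of that. Your argument is instead a self-contained linear-algebra proof, special to the binary case and in the spirit of Sylvester's original treatment: you compare $W := \operatorname{span}\{(\alpha_i x+\beta_i y)^d\}$ with $\ker\phi_g$ where $\phi_g\colon S_d\to S_{d-k}$ is the action of $g=\prod_i(\beta_i\partial_x-\alpha_i\partial_y)$, show $\dim W = k$ by the isolating-operator trick (valid for $k\le d+1$, which also disposes of the vacuous case $k>d$), and show $\dim\ker\phi_g = k$ by identifying the transpose of $\phi_g$ under the perfect pairing of Lemma~\ref{pr:lemDiff} with the injective multiplication map $\mu_g\colon T_{d-k}\to T_d$. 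The dimension count that makes this work, namely $\dim S_d - \dim S_{d-k} = k$, is exactly what is special about two variables; in $n+1\ge 3$ variables the kernel of $\phi_g$ is far larger than the span of the powers, so your approach would not carry over, whereas the paper's scheme-theoretic route does and moreover yields the statement for non-reduced $Z$. What your version buys is that it is elementary, avoids the Veronese/span machinery and Proposition~\ref{pr:propSocle} entirely, and exhibits the equality $W=\ker\phi_g$ explicitly, which the general approach does not.
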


Since $\prC$ is algebraically closed, we may write $c_i l_i^d = (l'_i)^d$, where $l'_i$ a linear form with $[l_i] = [l'_i]$,
by taking a $d$-th root of $c_i$. Thus, the reduced Apolarity Lemma characterizes the symmetric rank of $f$ as the smallest $k$ such that there is a closed reduced subscheme $Z \subseteq \prP(V)$ consisting of $k$ distinct points and satisfying $I_Z \subseteq f^\perp$. This will be used in Section~\ref{pr:secExamples} for computing the symmetric rank of certain polynomials. We omit a proof of the reduced version as it is a special case of the scheme-theoretic version in Theorem~\ref{pr:thmSchemeApolarity} below.

To formulate that theorem, we define the \emph{projective linear span} of a closed subscheme $X \subseteq \prP^N$, denoted by $\langle X \rangle$, to be the smallest projective linear subspace of $\prP^N$, which contains $X$ as a sub\emph{scheme}. The scheme $\langle X \rangle$ is the vanishing locus of $(I_X)_1$.

For illustrative reasons we state the Apolarity Lemma for binary forms, Theorem~\ref{pr:thmSchemeApolarityBinary}, back to back with the general version.  The  formulation of Theorem~\ref{pr:thmSchemeApolarityBinary} can also be found in  \cite[Lemma~1.31]{prIKbook}. 

\begin{thm}[Apolarity Lemma, scheme version]\label{pr:thmSchemeApolarity} \ \\
Let $Z \subseteq \prP(V)$ be a closed zero-dimensional subscheme with vanishing ideal $I_Z \subseteq \prSym(V^\vee)$ and let $\nu_d \colon \prP(V) \to \prP ( \prSym^d(V) ), \, [l] \mapsto [l^d]$ be the Veronese embedding. Then,  for $f \in \prSym^d(V) \backslash \{0\}$,
	\begin{align*}
	I_Z \subseteq f^\perp \quad \Leftrightarrow \quad [f] \in \langle \nu_d(Z) \rangle \, .
	\end{align*}
\end{thm}

\begin{thm}[Apolarity Lemma for binary forms]\label{pr:thmSchemeApolarityBinary} \ \\
Let $f \in \prC[x,y]_d$, pick distinct $(\alpha_i : \beta_i) \in \prP^1$ and integers $1 \leq m_i \leq d$ for $i=1,\ldots,k$. Then
	\begin{align*}
	&\prod_{i=1}^k (\beta_i \partial_x - \alpha_i \partial_y)^{m_i} \cdot f = 0 \\ \Leftrightarrow \quad &\exists \, c_i(x,y) \in \prC[x,y]_{m_i -1} \colon \; f =  \sum_{i=1}^k c_i(x,y) (\alpha_i x + \beta_i y)^{d-m_i +1} \, .
	\end{align*}
\end{thm}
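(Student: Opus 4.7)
The plan is to derive this binary Apolarity Lemma from the scheme-theoretic version, Theorem~\ref{pr:thmSchemeApolarity}, by choosing the appropriate zero-dimensional subscheme of $\prP^1 = \prP(V^\vee)$. Put $g := \prod_{i=1}^k (\beta_i\partial_x - \alpha_i\partial_y)^{m_i} \in T = \prC[\partial_x,\partial_y]$, write $g_i := \beta_i\partial_x - \alpha_i\partial_y$ and $l_i := \alpha_i x + \beta_i y$, and let $Z$ be the subscheme cut out by the principal ideal $(g)$. Since the points $(\alpha_i:\beta_i)$ are distinct, $Z = Z_1 \sqcup \cdots \sqcup Z_k$ where $Z_i$ is the fat point of length $m_i$ with local ideal $(g_i)^{m_i}$ at $(\alpha_i:\beta_i)$.

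The LHS of the claimed equivalence is then precisely the hypothesis $I_Z \subseteq f^\perp$ of Theorem~\ref{pr:thmSchemeApolarity}: indeed, the principality $I_Z = (g)$ together with the fact that $f^\perp$ is a $T$-ideal forces $I_Z \subseteq f^\perp$ to be equivalent to $g \cdot f = 0$. For the RHS, I would identify $\langle \nu_d(Z) \rangle$ with the linear span $U := V_1 + \cdots + V_k$ inside $\prSym^d V$, where $V_i := \{\,c_i\, l_i^{d-m_i+1} : c_i \in \prC[x,y]_{m_i-1}\,\}$. Since the $Z_i$ have pairwise disjoint supports, $\langle \nu_d(Z) \rangle = \sum_{i=1}^k \langle \nu_d(Z_i) \rangle$, so the problem reduces to the single-point statement $\langle \nu_d(Z_i) \rangle = V_i$, i.e.\ to recognizing $V_i$ as the $(m_i-1)$-th osculating space of the Veronese variety at $[l_i^d]$.

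The inclusion $V_i \subseteq \langle \nu_d(Z_i) \rangle$ I would get from Theorem~\ref{pr:thmSchemeApolarity} applied to $Z_i$ itself, once the following Leibniz computation is verified: $g_i^{m_i}\cdot(c_i l_i^{d-m_i+1}) = g_i^{m_i}(c_i)\cdot l_i^{d-m_i+1} = 0$. This holds because $g_i$ annihilates $l_i$ (direct check: $g_i \cdot l_i = \beta_i\alpha_i - \alpha_i\beta_i = 0$), so by the derivation property each application of $g_i$ slides past $l_i^{d-m_i+1}$ and lands on $c_i$, and then $g_i^{m_i}(c_i) = 0$ because $\deg c_i = m_i - 1 < m_i$. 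Since the constant-coefficient operators $g_j$ commute, the very same calculation—applying $g_i^{m_i}$ first to the $i$-th summand—immediately establishes the $\Leftarrow$ direction of the full statement in one go.

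The main obstacle is the reverse inclusion $\langle \nu_d(Z_i) \rangle \subseteq V_i$, which I would close by a dimension count. On the one hand, $\dim V_i = m_i$ since multiplication by $l_i^{d-m_i+1}$ in the domain $\prC[x,y]$ is injective and $\dim \prC[x,y]_{m_i-1} = m_i$ (using $m_i \leq d$, so the exponent $d-m_i+1$ is non-negative). On the other hand, $\dim \langle \nu_d(Z_i) \rangle$ equals the length $m_i$ of $Z_i$ because any fat point of multiplicity at most $d+1$ in $\prP^1$ imposes independent conditions on $H^0(\prO_{\prP^1}(d))$. Summing, one gets $\dim U = \dim \langle \nu_d(Z) \rangle = \min(\sum m_i, d+1)$, closing the argument in the non-degenerate range $\sum m_i \leq d+1$. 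The regime $\sum m_i > d+1$ is handled uniformly: the LHS condition is then automatic since $g$ has degree $> d$, and by passing to multiplicities $m_i' \leq m_i$ with $\sum m_i' = d+1$ and noting $V_i' \subseteq V_i$, one sees that $U = \prSym^d V$ as well, so the RHS is also vacuous.
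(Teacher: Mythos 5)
The paper offers no proof of this statement; it simply cites \cite[Lemma~1.31]{prIKbook}. Your derivation from the scheme-theoretic Apolarity Lemma (Theorem~\ref{pr:thmSchemeApolarity}) is correct in all its essentials: the identification $I_Z=(g)$ on $\prP^1$, the reduction of the LHS to $I_Z\subseteq f^\perp$, the reduction of the RHS to membership in $U:=\sum_i V_i$, the decomposition $\langle\nu_d(Z)\rangle=\sum_i\langle\nu_d(Z_i)\rangle$ for disjointly supported $Z_i$, the Leibniz computation showing $V_i\subseteq\langle\nu_d(Z_i)\rangle$, and the dimension count $\dim V_i=m_i=\dim\langle\nu_d(Z_i)\rangle$ (as affine cones), which is valid because $m_i\leq d<d+1$. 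This is a clean way to prove the result and is considerably more informative than the paper's bare citation.

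The only place the write-up should be tightened is the final stage. Once you have the component-wise equality $V_i=\langle\nu_d(Z_i)\rangle$ from the two inclusions plus the dimension count, the chain
\begin{align*}
U \;=\; \sum_{i=1}^k V_i \;=\; \sum_{i=1}^k\langle\nu_d(Z_i)\rangle \;=\; \langle\nu_d(Z)\rangle
\end{align*}
follows at once, with no dependence on whether $\sum_i m_i$ exceeds $d+1$. The intermediate claim $\dim U=\dim\langle\nu_d(Z)\rangle=\min(\sum m_i,\,d+1)$ is a distraction here: as stated it suggests a global dimension comparison, which would require knowing that the $V_i$ are in direct sum (true precisely in the regime $\sum m_i\leq d+1$, but something you neither prove nor need). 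Likewise, the entire last paragraph treating $\sum m_i>d+1$ separately is redundant. Removing both and simply invoking the displayed chain gives a shorter, uniform argument.
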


Before proving Theorem~\ref{pr:thmSchemeApolarity}, let us further stress in which sense the scheme version of the Apolarity Lemma generalizes the reduced version.  For binary forms, the reader is encouraged to compare Theorem~\ref{pr:thmRedApolarityBinary} with Theorem~\ref{pr:thmSchemeApolarityBinary}, thereby noting how the scheme version ``sees multiplicities''.

In general, if $X \subseteq \prP^N$ is a closed reduced subscheme, then $\langle X \rangle$ is given by the usual projective linear span of the closed points of $X$. Thus, if $Z$ (and hence $\nu_d(Z)$) is reduced in Theorem~\ref{pr:thmSchemeApolarity} we obtain Theorem~\ref{pr:thmRedApolarity}.

In contrast, for non-reduced $X$ one may have $\langle X_{\operatorname{red}} \rangle \varsubsetneq \langle X \rangle$ as ``fat points need more space''. For example, if $Y \subseteq \prP^2$ is the closed reduced subscheme consisting of the points $(1:0:0)$ and $(0:1:0)$, then $\langle Y \rangle = \prProj(\,\prC[x_0,x_1,x_2]/(x_2)\,) \cong \prP^1$. Equipping the point $(1:0:0)$  with the multiplicity two, scheme structure coming from the ideal
$(x_1,x_2^2)$,  and leaving $(0:1:0)$ untouched, we get a scheme $X$ with
	\begin{align*}
		I_X = \big[ (x_1, x_2^2)(x_0,x_2) \big]^{\text{sat}} = \big[ (x_0 x_1, x_1 x_2, x_0x_2^2, x_2^3) \big]^{\text{sat}}
		= (x_0 x_1, x_1 x_2, x_2^2) ,
	\end{align*}
where $[\cdot]^{\text{sat}}$ denotes saturation.
Thus, $X \nsubseteq \langle Y \rangle$ as sub\emph{schemes} of $\prP^2$ since $x_2 \notin I_X$. Hence, we necessarily get $\langle X \rangle = \prP^2$, which may also be seen via $(I_X)_1=0$.

\begin{proof}[Proof of Theorem~\ref{pr:thmSchemeApolarity}.] 
First, we note that the coordinate ring of $\prP ( \prSym^d(V) )$ is $R := \prSym \big( (\prSym^d(V))^\vee \big)$. In particular, $R_1 = (\prSym^d(V))^\vee$.  It is a property of the Veronese embedding that linear forms vanishing on $\nu_d(Z)$ correspond to homogeneous forms of degree $d$ vanishing on $Z$, i.e., 
	\begin{equation}\label{pr:eqPropVeronese}
	R_1 \cap I_{\nu_d(Z)} = (\prSym^d(V))^\vee \cap I_{\nu_d(Z)} \cong \prSym^d(V^\vee) \cap I_Z \, .
	\end{equation}
We can make this explicit as follows.

Viewing $g \in \prSym^d(V^\vee)$ as a linear form on $\prSym^d(V)$ via the dual pairing from Equation~\eqref{pr:eqDualPair}, we have that $\langle g, f'\rangle = g \cdot f'$, where $\langle g, f' \rangle$ denotes the function value of $g$ at $f' \in \prSym^d(V)$. In particular, $\langle g,f \rangle = g \cdot f$. This identification is allowed, i.e., it respects \eqref{pr:eqPropVeronese}, because for $l = \sum_i c_i x_i \in V$ we have $\langle g, l^d \rangle = g \cdot l^d = 0$ if and only if $g(l) = g(c_0,\ldots,c_n) = 0$, by Lemma~\ref{pr:lemLinearForm}.

Now, $[f] \in \langle \nu_d(Z) \rangle$ if and only if every linear form in $R_1 = (\prSym^d(V))^\vee$, that vanishes on $\nu_d(Z)$ also vanishes on $[f]$. This condition is equivalent to $[f]$ being annihilated by the space of linear forms on
$\prP ( \prSym^d(V) )$ that vanish on $\nu_d(Z)$; this space coincides with the degree $d$ forms on $\prP(V)$ that vanish on $Z$, that is $(I_Z)_d$. We have proved that $[f] \in \langle \nu_d(Z) \rangle$ if and only if $f\in(I_Z)_d^\perp$, which
is equivalent to  $(I_Z)_d \subseteq (f^\perp)_d$.

We end the proof by showing that $(I_Z)_d \subseteq (f^\perp)_d$ is equivalent to $I_Z \subseteq f^\perp$. Clearly, the latter implies the former. For the converse recall that for all $e \!  > \! d$, $(f^\perp)_e = \prSym^e (V^\vee)$ and hence $(I_Z)_e \subseteq (f^\perp)_e$. Using $(I_Z)_d \subseteq (f^\perp)_d$ and then Proposition~\ref{pr:propSocle} yields for all $1 \leq e < d$
	\begin{align*}
	(I_Z)_e \subseteq \left[ (I_Z)_d : \prm^{d-e} \right]_e \subseteq \left[ (f^\perp)_d : \prm^{d-e} \right]_e = (f^\perp)_e \, .
	\end{align*}
Altogether, we have $I_Z \subseteq f^\perp$ as desired.
\end{proof}

The scheme-theoretic version of the Apolarity Lemma is used to characterize  a notion, which was of increasing importance during the last years. Namely, the \emph{cactus rank} of a symmetric tensor $f \in \prSym^d(V)$ is the least length of any zero-dimensional subscheme $Z \subseteq \prP^n$ with $I_Z \subseteq f^\perp$. Actually, cactus rank already appeared as \emph{scheme length} in \cite{prIKbook} and a generalization of the above definition is due to \cite{prBBSecantVeronese14}.
 
Finally, let us point out the recent development of \emph{border apolarity} from \cite{prBB20apolarity}, which combines apolarity theory with the so-called border substitution method.  This promising tool was already crucially used in \cite{prCHLNewLowerBounds} to prove new bounds on the border rank of small matrix multiplication tensors.


\section{Examples of Symmetric Rank}\label{pr:secExamples}

In the following we stick to the notation of Section~\ref{pr:secApolarity} and will use Apolarity Theory to study the symmetric rank in certain examples. In particular, Theorem~\ref{pr:ThmSrkMonomial} gives a formula for the symmetric rank of any monomial. Moreover, Theorem~\ref{pr:thmChilo} at the end will describe $\omega$ in terms of the (border) symmetric rank of the symmetrization of $M_{\langle n \rangle}$. We start with investigating the symmetric rank of a binary form.

\begin{ex}\label{pr:exSrkBinary}
Let $f \in \prC[x,y]_d$ be a binary form of degree $d \geq 1$. We are going to show that $\prsrk(t) \leq d$. For this, let
	\begin{align*}
	\nu_d \colon \prP^1 \to \prP^{d} \cong \prP(\prC[x,y]_d) \, , \quad (\alpha:\beta) \mapsto 
	\left( \alpha^d : \alpha^{d-1}\beta : \ldots : \beta^d \right) 
	\end{align*}
be the Veronese embedding. Its image $C := \nu_d(\prP^1)$ is the rational normal curve, which has degree $d$. Therefore, a general hyperplane $H \cong \prP^{d-1}$ of $\prP^d$ cuts $C$ in $d$ distinct, reduced points. Using the Vandermonde determinant one can deduce that these $d$ distinct points are linearly independent and hence span the hyperplane $H$. Taking $H$ to be a general hyperplane, which contains $[f] \in \prP^d$, we conclude that $f$ is a $\prC$-linear combination of $d$ many powers of linear forms. This shows $\prsrk(f) \leq d$.

The upper bound is tight, because the binary form $xy^{d-1}$ has symmetric rank $d$. Although this is a special case of Theorem~\ref{pr:ThmSrkMonomial} below, we give a direct argument to illustrate the Apolarity Lemma for binary forms, see Theorem~\ref{pr:thmRedApolarityBinary}. Of course, $\prsrk(x) = 1$ and for all $(\alpha,\beta) \in \prC^2 \!\setminus\! \lbrace 0 \rbrace$ one has $xy \neq (\alpha x + \beta y)^2$. Therefore, we may assume $d \geq 3$. Clearly, $\prsrk(xy^{d-1}) > 1$. Since $(xy^{d-1})^\perp = (\partial_x^2, \partial_y^d)$ we have
	\begin{equation}\label{pr:eqSrkBinary}
	\left( xy^{d-1} \right)^\perp_e = \left\lbrace \partial_x^2 g \; \big\vert \; g \in \prC[\partial_x,\partial_y]_{e-2} \right\rbrace
	\end{equation}
for all $2\leq e < d$. If there are closed, reduced points $(\alpha_i : \beta_i) \in \prP^1$ with
	\begin{align*}
	\prod_{i=1}^e (\beta_i \partial_x - \alpha_i \partial_y) \in 
	\left( xy^{d-1} \right)^\perp_e
	\end{align*}
and $2 \leq e < d$, then $\big\vert \lbrace i \mid \alpha_i = 0 \rbrace \big\vert \geq 2$ by equation \eqref{pr:eqSrkBinary}. Thus, the $(\alpha_i : \beta_i)$ are not all pairwise distinct. As a consequence of Theorem~\ref{pr:thmRedApolarityBinary} the form $xy^{d-1}$ cannot have symmetric rank $2 \leq e<d$, hence $\prsrk(xy^{d-1})=d$.

A detailed study of the (border) symmetric rank of binary forms can be found in \cite{prCSrankBinaryForm}.\hfill\prExample
\end{ex}

Before we prove Theorem~\ref{pr:ThmSrkMonomial} we need to recall some facts about the Hilbert function and Hilbert polynomial. For details we refer to \cite[I.7]{prHartshorne}. Let $M$ be a finitely generated graded $T$-module (recall $T = \prC[\partial_0, \ldots, \partial_n]$). Then the \textbf{\textit{Hilbert function}} of $M$ is defined as
	\begin{align*}
	HF(M, \cdot) \colon \prZ \to \prZ, \quad d \mapsto \dim_{\prC} M_d \, .
	\end{align*}
There exists an integer $N \geq 0$ and a polynomial $HP(M, x) \in \prC[x]$ such that $HF(M, d) = HP(M, d)$ for all $d \geq N$. The polynomial $HP(M,x)$ is called the \textbf{\textit{Hilbert polynomial}} of $M$.

\begin{lem}\label{pr:lemHilbertFunction}
Let $I$ be a homogeneous ideal of $T = \prC[\partial_0, \ldots, \partial_n]$ such that $T/I$ has Krull dimension one. Then the Hilbert polynomial $HP(T/I,x)$ equals some integer constant $s$. Furthermore:
	\begin{abclist}
	\item If $I$ is a radical ideal, then it is the vanishing ideal of $s$ distinct closed, reduced points in $\prP^n$.
	\item If $g \in T_1$ is a linear form, which is not a zero divisor in $T/I$, then
		\begin{align*}
		HF(T/I,d) = \sum_{i=0}^d HF \big(\, T/ ( I + (g) ), i \,\big)
		\end{align*}
	for all $d \geq 0$.	 
	\end{abclist}
\end{lem}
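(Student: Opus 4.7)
The plan is to deduce the whole statement from two classical facts: (i) the degree of the Hilbert polynomial of $T/I$ equals the projective dimension of $\operatorname{Proj}(T/I)$, and (ii) multiplication by a non--zero-divisor yields a clean short exact sequence of graded modules. Since $T/I$ has Krull dimension one, $\operatorname{Proj}(T/I)$ has dimension zero, so $HP(T/I,x)$ has degree zero, i.e.\ is a constant $s$; and since $HF(T/I,d)\in\mathbb{Z}_{\geq 0}$ agrees with $s$ for all $d\gg 0$, the constant $s$ is a non-negative integer.

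For part~a), I would argue that a radical homogeneous ideal whose quotient has Krull dimension one is automatically saturated with respect to $\mathfrak{m}=(\partial_0,\ldots,\partial_n)$: the associated primes of a radical ideal are its minimal primes, and each such prime of $I$ corresponds to a one-dimensional irreducible component of the affine cone, so none of them equals $\mathfrak{m}$. Consequently $I$ is the full homogeneous vanishing ideal of the closed subscheme $Z:=\operatorname{Proj}(T/I)\subseteq\prP^n$. Since $T/I$ is reduced and zero-dimensional as a projective scheme, $Z$ is a finite disjoint union of reduced closed points, and the number of points equals the degree of $Z$, which in turn equals the constant $s=HP(T/I,x)$. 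Hence $I$ is the vanishing ideal of exactly $s$ distinct reduced points of $\prP^n$.

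For part~b), I would use the exact sequence of graded $T$-modules
\begin{equation*}
0 \longrightarrow (T/I)(-1)\ \xrightarrow{\ \cdot g\ }\ T/I \longrightarrow T/(I+(g)) \longrightarrow 0,
\end{equation*}
where injectivity of multiplication by $g$ is precisely the assumption that $g$ is not a zero divisor in $T/I$. Taking graded pieces in degree $d$ and comparing $\prC$-dimensions yields
\begin{equation*}
HF(T/(I+(g)),d) \;=\; HF(T/I,d)-HF(T/I,d-1).
\end{equation*}
Telescoping this identity from degree $0$ upward, together with the convention $HF(T/I,-1)=0$, gives the required formula $HF(T/I,d)=\sum_{i=0}^d HF(T/(I+(g)),i)$.

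The main subtle point, and the place I would spend most care, is the saturation step in part~a): one must justify that for a homogeneous radical ideal with $\dim(T/I)=1$, no embedded or irrelevant primary component can appear, so that $I$ really coincides with the saturated ideal of $Z$ and the projective degree of $Z$ matches the constant $s$. The Hilbert polynomial/degree computation in paragraph one and the short exact sequence argument in paragraph three are both routine once this identification is in place.
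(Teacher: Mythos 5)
Your argument is correct and matches the paper's proof in spirit. For part b) you give exactly the paper's argument: the short exact sequence $0 \to (T/I)(-1) \xrightarrow{\cdot g} T/I \to T/(I+(g)) \to 0$ induced by multiplication with the non--zero-divisor $g$, followed by telescoping. For the constancy of the Hilbert polynomial and for part a), the paper simply cites Hartshorne I.7 (page 52); your paragraph on saturation of radical homogeneous ideals with $\dim(T/I)=1$ is a correct unpacking of why that reference applies --- in particular, your observation that if $\mathfrak{m}$ were a minimal prime it would be the \emph{only} one (since minimal primes are incomparable and every proper homogeneous prime sits inside $\mathfrak{m}$), forcing $\dim(T/I)=0$, is the right way to rule out the irrelevant component and conclude $I = I(Z)$.
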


\begin{proof}
For all statements except b) we refer to \cite[page 52]{prHartshorne}. For part b), just note that the multiplication with $g$ yields for any $d \in \prZ$
	\[
	\begin{tikzcd}[column sep = scriptsize]
	0 \ar[r] & (T/I)_{d-1} \ar[r, "\cdot g"] & (T/I)_d \ar[r] & \left[ T/ \big(I+ (g) \big) \right]_d \ar[r] & 0 \, ,
	\end{tikzcd}
	\]
a short exact sequence of $\prC$-vector spaces.
\end{proof}

Equipped with this lemma we are now able to compute the symmetric rank of a monomial $x_0^{\alpha_0} \cdots x_n^{\alpha_n}$. Of course, we may assume $\alpha_0 \leq \ldots \leq \alpha_n$ after reordering the variables. Moreover, note that by part c) of Remark~\ref{pr:remSrk} there is no loss in generality assuming $\alpha_0 \geq 1$ as well.

\begin{thm}[{\cite[Proposition~3.1]{prCCG12}}]\label{pr:ThmSrkMonomial}
Let $1 \leq \alpha_0 \leq \alpha_1 \leq \ldots \leq \alpha_n$. Then
	\begin{align*}
	\prsrk \left( x_0^{\alpha_0} \cdots x_n^{\alpha_n} \right) = \prod_{i=1}^n (\alpha_i + 1) \, .
	\end{align*}
\end{thm}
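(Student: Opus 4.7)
The plan is to derive both inequalities from the reduced Apolarity Lemma (Theorem~\ref{pr:thmRedApolarity}), using that $f^{\perp}=(\partial_0^{\alpha_0+1},\ldots,\partial_n^{\alpha_n+1})$. For the upper bound I would exhibit a reduced zero-dimensional subscheme $Z \subseteq \prP^n$ of length $\prod_{i=1}^n(\alpha_i+1)$ with $I_Z \subseteq f^{\perp}$; the natural complete intersection to try is
\[
J := \bigl(\partial_1^{\alpha_1+1}-\partial_0^{\alpha_1+1},\; \partial_2^{\alpha_2+1}-\partial_0^{\alpha_2+1},\; \ldots,\; \partial_n^{\alpha_n+1}-\partial_0^{\alpha_n+1}\bigr).
\]
Each generator lies in $f^{\perp}$ because $\partial_i^{\alpha_i+1}\cdot f=0$ trivially and $\partial_0^{\alpha_i+1}\cdot f=0$ thanks to $\alpha_i+1>\alpha_0$ (using the hypothesis $\alpha_0\le\alpha_i$). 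Dehomogenising at $\partial_0=1$, the system $\partial_i^{\alpha_i+1}=1$ has exactly $\prod_{i=1}^n(\alpha_i+1)$ distinct solutions, namely all tuples of roots of unity, with non-singular Jacobian so the scheme is reduced; and no point lies in $\{\partial_0=0\}$ since that would force every $\partial_i=0$. Theorem~\ref{pr:thmRedApolarity} then yields $\prsrk(f)\le\prod_{i=1}^n(\alpha_i+1)$.

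For the lower bound I would argue by induction on $n$. The base case $n=0$ is $\prsrk(x_0^{\alpha_0})=1$, matching the empty product. For the inductive step, take an optimal Waring decomposition $f=\sum_{j=1}^r l_j^d$ with $r=\prsrk(f)$ and split $l_j=a_jx_n+L_j$ with $L_j\in\prC[x_0,\ldots,x_{n-1}]_1$. Expanding by the binomial theorem and comparing the coefficient of $x_n^k$ on both sides of $f=\sum_j l_j^d$ produces the system
\[
\sum_{j=1}^r a_j^k L_j^{d-k}=0 \ \text{ for } k\ne\alpha_n, \qquad \sum_{j=1}^r a_j^{\alpha_n}L_j^{d-\alpha_n}=c\,g,\ c\in\prC^{\times},
\]
with $g:=x_0^{\alpha_0}\cdots x_{n-1}^{\alpha_{n-1}}$. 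The aim is to read off $r\ge(\alpha_n+1)\prsrk(g)$, which by the inductive hypothesis equals $(\alpha_n+1)\prod_{i=1}^{n-1}(\alpha_i+1)$, the desired quantity.

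The main obstacle is precisely this last step. A single substitution $x_n=\zeta$, or equivalently one application of $\partial_n^{\alpha_n}$, only produces a rank-$r$ decomposition of $g$ and hence the weaker bound $r\ge\prod_{i=1}^{n-1}(\alpha_i+1)$; catalecticant (flattening) bounds also fall short, since $\max_e\dim(A_f)_e$ is generally strictly smaller than $\prod_{i=1}^n(\alpha_i+1)$ for monomials (e.g.\ $\max_e\dim(A_{xy^2})_e=2$ while $\prsrk(xy^2)=3$), so pure Hilbert-function comparisons cannot close the gap. The cleanest route I would attempt is a Vandermonde-type argument on the system above: viewed as a linear system in the vectors $(L_j^{d-k})_j$, the coefficient matrix $\bigl(a_j^k\bigr)_{k,j}$ has rank at most the number of distinct values among the $a_j$, and the vanishing/nonvanishing pattern with the unique nonzero row at $k=\alpha_n$ should force at least $\alpha_n+1$ such values to occur, each associated ``slice'' $\{L_j:a_j=\text{const}\}$ carrying its own Waring decomposition of $g$ to which the inductive hypothesis applies, contributing the missing factor $\alpha_n+1$. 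Equivalently, one can pass to the scheme version Apolarity Lemma (Theorem~\ref{pr:thmSchemeApolarity}) and argue that no zero-dimensional $Z\subseteq\prP^n$ of degree smaller than $\prod_{i=1}^n(\alpha_i+1)$ can have $I_Z\subseteq f^{\perp}$ by comparing the Hilbert function of $T/I_Z$ with that of the Artinian complete-intersection algebra $A_f=T/f^{\perp}$, which is the approach followed in \cite{prCCG12}.
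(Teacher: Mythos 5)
Your upper bound is exactly the paper's: the complete-intersection ideal $J=(\partial_i^{\alpha_i+1}-\partial_0^{\alpha_i+1})_{i=1}^n$ (up to sign) sits inside $f^\perp$ because $\alpha_0\le\alpha_i$ forces $\partial_0^{\alpha_i+1}\in f^\perp$, and its zero locus is $\prod_{i=1}^n(\alpha_i+1)$ reduced points, so the reduced Apolarity Lemma gives $\prsrk(f)\le\prod_{i=1}^n(\alpha_i+1)$. That part is fine and matches the paper.

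The lower bound, however, has a genuine gap, which you yourself flag. The Vandermonde sketch does not close: the claim that the single nonzero row at $k=\alpha_n$ ``should force at least $\alpha_n+1$ distinct values among the $a_j$'' is unjustified, and even granting it, the ``slices'' $\{j:a_j=\text{const}\}$ do not individually carry Waring decompositions of $g$ (the vanishing relations couple the slices; you only control certain weighted sums $\sum_j a_j^k L_j^{d-k}$, not the per-slice sums $\sum_{a_j=\text{const}} L_j^{d-k}$). So the ``contributing the missing factor $\alpha_n+1$'' step is precisely where the argument fails, and this kind of naive induction on monomials is known to be slippery for exactly this reason.

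Your fallback description of the CCG12 approach is also imprecise in a way that matters. You say one compares $HF(T/I_Z)$ with $HF(A_f)$, but as you yourself observed a paragraph earlier, $\max_e\dim(A_f)_e$ is typically strictly smaller than $\prod_{i=1}^n(\alpha_i+1)$; moreover $A_f$ is Artinian while $T/I_Z$ stabilizes at $\deg Z$, so a direct comparison of Hilbert functions cannot yield the required inequality. The missing device is the colon ideal: for $I=I_Z\subseteq f^\perp$ one sets $I':=(I:\partial_0)$, shows $I'$ is still radical, that $\partial_0$ is a non-zero-divisor on $T/I'$ (here $\alpha_0\ge1$ is used to ensure $\partial_0\notin f^\perp$), and that $I'+(\partial_0)\subseteq J':=(\partial_0,\partial_1^{\alpha_1+1},\ldots,\partial_n^{\alpha_n+1})$. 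Then summing the short exact sequence from multiplication by $\partial_0$ gives
\begin{align*}
\deg Z \ \ge\ HF(T/I',d)\ =\ \sum_{i=0}^d HF\bigl(T/(I'+(\partial_0)),i\bigr)\ \ge\ \sum_{i=0}^d HF(T/J',i)\ =\ \prod_{i=1}^n(\alpha_i+1)
\end{align*}
for $d\gg 0$, since $J'$ is an Artinian complete intersection of that length. This colon-ideal maneuver is the actual content of the lower bound and is not present, even in outline, in your proposal; also note the paper does not need the scheme-theoretic Apolarity Lemma here, only the reduced version (it works with a set of $s$ reduced points, not an arbitrary zero-dimensional scheme, which suffices for Waring rank).
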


\begin{proof}
This proof completely follows \cite{prCCG12}. For $n=0$ we have $\prsrk(x_0^{\alpha_0}) = 1$ as desired. Therefore, we can assume $n \geq 1$. Set $f := x^\alpha$ and remember that the annihilator of $f$ is
	\begin{align*}
	f^{\perp} = \left( \partial_0^{\alpha_0 + 1}, \ldots, \partial_n^{\alpha_n + 1} \right) \, .
	\end{align*}
Since $\alpha_0 = \min_i \alpha_i$ we have $\partial_0^{\alpha_i + 1} \in (f)^\perp$ for all $i=1,\ldots,n$. Hence, the homogeneous ideal
	\begin{align*}
	J := \left( \partial_0^{\alpha_1 + 1} - \partial_1^{\alpha_1 + 1},
	\partial_0^{\alpha_2 + 1} - \partial_2^{\alpha_2 + 1}, \ldots,
	\partial_0^{\alpha_n + 1} - \partial_n^{\alpha_n + 1} \right)
	\end{align*}
is contained in $f^\perp$. Moreover, $J$ is the vanishing ideal of the reduced closed subscheme
	\begin{align*}
	Z = \left\lbrace \left( 1: \xi_1^{k_1} : \xi_2^{k_2} : \ldots : \xi_n^{k_n} \right) \; \Big\vert \; 0 \leq k_i \leq \alpha_i \text{ for } i=1, \ldots,n  \right\rbrace \subseteq \prP^n \, ,
	\end{align*}
where $\xi_i$ is a primitive $(\alpha_i + 1)$-th root of unity. The cardinality of $Z$ is
	\begin{align*}
	r := \prod_{i=1}^n (\alpha_i + 1) 
	\end{align*}
and hence the reduced Apolarity Lemma (Theorem~\ref{pr:thmRedApolarity}) gives $\prsrk(f) \leq r$.

On the other hand, by Theorem~\ref{pr:thmRedApolarity} there exists an ideal $I \subseteq f^\perp$ such that $I$ is the vanishing ideal of $s:=\prsrk(f)$ distinct, closed reduced points in $\prP^n$. We are left to prove $s \geq r$. To do so, consider $I' := (I : \partial_0)$. Since $I$ is a radical ideal, also $I'$ is radical as the following computation shows
	\begin{align*}
	I' \subseteq \sqrt{I'} = \sqrt{(I : \partial_0)} \subseteq \left( \sqrt{I} : \partial_0 \right) = (I : \partial_0) = I' \, .
	\end{align*}
Moreover, since $I$ is radical we have
	\begin{align*}
	V(I: \partial_0) = \overline{V(I) \setminus V(\partial_0)} \qquad &\text{in } \mathbb{A}^{n+1} \\
	\text{respectively } \quad V_{\prP^n}(I: \partial_0) = \overline{V_{\prP^n}(I) \setminus V_{\prP^n}(\partial_0)} \qquad &\text{in } \prP^n \, .
	\end{align*}
But $\alpha_0 \geq 1$ implies $\partial_0 \notin f^\perp$ and therefore $\partial_0 \notin I$. In particular, not all points of $V_{\prP^n}(I)$ are contained in the $\partial_0$-plane, hence $V_{\prP^n}(I:\partial_0) \neq \emptyset$. Altogether, $I' = (I : \partial_0)$ is the vanishing ideal of $s' \leq s$ reduced closed points in $\prP^n$ with $s' > 0$. Hence, for $d \gg 0$ we obtain $HF(T/I', d) = s'$ by Lemma~\ref{pr:lemHilbertFunction} part a).

We finish the proof by establishing  the inequality $s' \geq r$. First note that $I \subseteq f^\perp$ yields
	\begin{equation}\label{pr:eqSrkMonomial}
	I' + (\partial_0) \subseteq J' := (f^\perp : \partial_0) + (\partial_0) = \left( \partial_0, \partial_1^{\alpha_1 + 1}, \ldots, \partial_n^{\alpha_n + 1} \right) \, .
	\end{equation}
Let us show that the linear form $\partial_0$ is not a zero divisor in $T/I'$. To prove this, we use that $I$ is radical together with $\partial_0 \notin I$ to deduce $\partial_0 \partial_0 \notin I$. Therefore, $\partial_0 \notin I' = (I: \partial_0)$ and so $\partial_0 \neq 0$ in $T/I'$. Next, assume $g \in T$ with $g \partial_0 = 0$ in $T/I'$, i.e., $g \partial_0 \in I'$ and hence $g \partial_0^2 \in I$. But then $g^2 \partial_0^2 \in I$ as well and $I$ being radical gives $g \partial_0 \in I$, so $g \in I'$. The latter means $g = 0$ in $T/I'$ as desired.

Thus, we can apply Lemma~\ref{pr:lemHilbertFunction} part b) to conclude that for $d \gg 0$
	\begin{align*}
	s' = HF(T/I', d) \overset{\ref{pr:lemHilbertFunction} \text{ b)}}{=} &\sum_{i=0}^d HF \big( T/(I' + (\partial_0)), i \big) \\ \overset{\eqref{pr:eqSrkMonomial}}{\geq} \, &\sum_{i=0}^d HF \left( T/J', i \right) = \prod_{i=1}^n (\alpha_i + 1) = r.
	\end{align*}
The last equality holds, because $J'$ is a complete intersection ideal, compare equation \eqref{pr:eqSrkMonomial}. This ends the proof as $s \geq s'$.
\end{proof}

Note that the first part of the proof of Theorem~\ref{pr:ThmSrkMonomial} provides a way of computing a Waring rank decomposition of a monomial, compare \cite[Proposition~4.3]{prCCG12}. The coefficients of such a decomposition are made explicit in \cite[Section~2]{prBBT13}.

\begin{ex}
By Theorem~\ref{pr:ThmSrkMonomial} the monomial $xyz$ has symmetric rank $2 \cdot 2 =4$. A symmetric rank decomposition is given by
	\begin{align*}
	xyz = \frac{1}{24} \left[ (x+y+z)^3 - (x+y-z)^3 - (x-y+z)^3 + (x-y-z)^3 \right] \, ,
	\end{align*}
which is actually a decomposition in the style of \cite[Proposition~4.3]{prCCG12}. Moreover, the decomposition shows that any $(uvw) := (u \otimes v \otimes w)^{\prsym} \in \prSym^3(\prC^n)$ with $u,v,w \in \prC^n$ satisfies
	\begin{equation}\label{pr:eqSrkUVW}
	\prsrk(uvw) \leq 4 \, .
	\end{equation}
Actually, $\prsrk(uvw) < 4$ for linearly dependent $u,v,w$. See \cite{Fischer94} for the monomial
$x_1\ldots x_n$.\hfill\prExample
\end{ex}

\begin{remark} A shortcut to prove Theorem~\ref{pr:ThmSrkMonomial} was provided by Buczy\'{n}ski and Teitler in
\cite[Example~9]{prBucTei}.
\end{remark}

In contrast, computing the border symmetric rank of a monomial is still an open problem.

\begin{conj}[{\cite[Conjecture~1.1]{prOedingBorderRk16}}]
For $0 \leq \alpha_0 \leq \ldots \leq \alpha_n$ one has
	\begin{align*}
	\prbsrk \left( x_0^{\alpha_0} \cdots x_n^{\alpha_n} \right) = \prod_{i=0}^{n-1} (\alpha_i + 1) \, .
	\end{align*}
\end{conj}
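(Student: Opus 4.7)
The plan is to prove both inequalities separately. Write $f = x_0^{\alpha_0}\cdots x_n^{\alpha_n}$ of degree $d = \sum_i\alpha_i$, set $r := \prod_{i=0}^{n-1}(\alpha_i+1)$, and assume without loss of generality that $\alpha_0 \geq 1$, since $\alpha_0 = 0$ reduces the conjecture to fewer variables.

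For the upper bound $\prbsrk(f) \leq r$, I would exhibit an explicit one-parameter family generalizing the binary identity $\prbsrk(x^a y^b) = a+1$. Set $d' := d - \alpha_n = \sum_{i<n}\alpha_i$, fix a primitive $(\alpha_0+1)$-th root of unity $\omega$, and use Theorem~\ref{pr:ThmSrkMonomial} to decompose $g := \prod_{i<n} x_i^{\alpha_i} = \sum_{j=1}^N c_j L_j^{d'}$ with $N = \prod_{i=1}^{n-1}(\alpha_i+1)$. Consider
\[F_\epsilon := \epsilon^{-d'}\sum_{j=1}^N c_j \sum_{k=0}^{\alpha_0}\omega^{-kd'}(x_n + \epsilon\omega^k L_j)^d,\]
a linear combination of $N(\alpha_0+1) = r$ powers of linear forms. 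Expanding each $d$-th power by the binomial theorem, the inner sum over $k$ produces $\sum_{k=0}^{\alpha_0}\omega^{k(l-d')}$, which equals $\alpha_0+1$ when $(\alpha_0+1)\mid(l-d')$ and vanishes otherwise. Multiplication by $\epsilon^{-d'}$ then keeps the $l=d'$ term finite while sending all surviving higher-order terms (those with $l = d' + t(\alpha_0+1)$, $t\geq 1$) to zero. The limit is a nonzero scalar multiple of $x_n^{\alpha_n} g = f$, yielding $\prbsrk(f) \leq r$.

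For the lower bound $\prbsrk(f) \geq r$ — the main obstacle and the reason the statement is a conjecture — classical catalecticants do not suffice. Indeed, the catalecticant $C_{k,d-k}(f)\colon \prSym^k V^\vee \to \prSym^{d-k}V$ has rank $\#\{\beta\in\prZ_{\geq 0}^{n+1}: \beta_i\leq\alpha_i,\, |\beta|=k\}$, and the maximum of this count over $k$ is strictly below $r$ in general (already for $x_0 x_1 x_2$ it gives only $3 < 4 = r$). My plan is to employ Young flattenings in the style of Landsberg–Ottaviani: for a well-chosen Schur functor $\mathbb{S}_\lambda$, construct a $\prGL(V)$-equivariant linear map $F_f\colon A\to B$ between Schur-functor spaces depending linearly on $f$, so that $\prrk(F_f) \leq \prsrk(f)\cdot\prrk(F_{\ell^d})$ by linearity on rank-one summands. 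Since matrix rank is lower semicontinuous, this inequality transfers to border symmetric rank, giving $\prbsrk(f) \geq \prrk(F_f)/\prrk(F_{\ell^d})$. One then computes $\prrk(F_f)$ for the specific monomial $f$ via the torus-weight decomposition — a purely combinatorial count — and selects $\lambda$ so that the ratio attains $r$ exactly.

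The principal difficulty is the final step: identifying a Young (or Koszul) flattening whose rank-ratio on the monomial matches $r$ uniformly in $\alpha$. Existing flattenings close the gap only for special families such as squarefree monomials, very small $n$, or specific boundary cases, and no uniform representation-theoretic gadget is known to achieve the conjectured value. A complementary avenue — bounding below the length of every smoothable zero-dimensional subscheme $Z\subseteq\prP^n$ with $I_Z\subseteq f^\perp$ and then equating smoothable rank with $\prbsrk$ — transfers the problem to delicate Hilbert-scheme geometry at the relevant lengths, which is itself largely unsolved. Absent new input along one of these lines, the upper bound above is the strongest conclusion one can draw unconditionally.
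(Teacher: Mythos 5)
The statement you are asked about is presented in the paper purely as a conjecture (attributed to Oeding), with no proof offered, so there is no in-paper argument to compare against. Your assessment that the lower bound $\prbsrk(f)\geq\prod_{i<n}(\alpha_i+1)$ is genuinely open is accurate, and your discussion of why catalecticants fall short and why Young/Koszul flattenings have not closed the gap is a fair description of the state of the art.

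However, the one part of your proposal that does claim a complete argument --- the upper bound --- has a genuine gap. After the inner sum over $k$, the surviving exponents are all $l$ in $\{0,\ldots,d\}$ with $l\equiv d'\pmod{\alpha_0+1}$, not only $l=d'+t(\alpha_0+1)$ with $t\geq 1$. For $n\geq 2$ one has $d'=\sum_{i<n}\alpha_i\geq\alpha_0+1$, so the value $l=d'-(\alpha_0+1)$ (and possibly smaller) is admissible, and the corresponding term in $F_\epsilon$ carries the factor $\epsilon^{l-d'}=\epsilon^{-(\alpha_0+1)}$, which diverges as $\epsilon\to0$ unless its coefficient $\sum_j c_j L_j^{\,l}$ vanishes. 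That vanishing is \emph{not} automatic: it fails for a generic Waring decomposition of $g$ of length $N$ (for instance, for $g=x_0x_1^2$ the three-term decomposition coming from the apolar cubic $\partial_0^3+2\partial_1^3$ has $\sum_j c_j L_j\neq 0$). It does hold for the particular roots-of-unity decomposition implicit in the proof of Theorem~\ref{pr:ThmSrkMonomial}, because there $c_{\mathbf{k}}$ is proportional to $\prod_i\xi_i^{-k_i\alpha_i}$, so $\sum_{\mathbf{k}}c_{\mathbf{k}}\prod_i\xi_i^{k_i\beta_i}$ vanishes unless $\beta_i\equiv\alpha_i\pmod{\alpha_i+1}$ for every $i$ --- which forces $|\beta|\geq d'$ and hence $\sum_j c_j L_j^{\,l}=0$ for all $0\leq l<d'$. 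You need to state that you are using precisely that decomposition and prove this vanishing; as written, the family $F_\epsilon$ is not visibly bounded and the limit is not justified. With that repair, the upper bound $\prbsrk(f)\leq\prod_{i=0}^{n-1}(\alpha_i+1)$ is correct and known; the lower bound remains, as you say, conjectural.
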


Finally, we give a characterization of the exponent $\omega$ of matrix multiplication via (border) symmetric rank.

\begin{defn}
Let $f_n \in \prSym^3(\prC^{n \times n})$ be the symmetrization of $M_{\langle n \rangle}$, compare part~c) of Remark~\ref{pr:remSymmetricTensor}. Note that $f_n$ is the tensor, which corresponds to the cubic form $A \mapsto \prtr(A^3)$ on $\prC^{n \times n}$.
\end{defn}

\begin{thm}[{\cite[part of Theorem~1.1]{prChilo}}]\label{pr:thmChilo}
It holds that
	\begin{align*}
	\omega &= \inf \big\lbrace \tau \in \prR \mid \prsrk(f_n) \in \prO(n^\tau) \big\rbrace
	= \inf \big\lbrace \tau \in \prR \mid \prbsrk(f_n) \in \prO(n^\tau) \big\rbrace \\
	&= \inf \big\lbrace \tau \in \prR \mid \prrk(f_n) \in \prO(n^\tau) \big\rbrace
	= \inf \big\lbrace \tau \in \prR \mid \prbrk(f_n) \in \prO(n^\tau) \big\rbrace \, .
	\end{align*}
\end{thm}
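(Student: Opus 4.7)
The strategy is to sandwich each of the four exponents between $\omega$ from both sides by relating $f_n$ and $M_{\langle n \rangle}$. Denote the four infima on the right-hand side by $\alpha_{\prsrk}, \alpha_{\prbsrk}, \alpha_{\prrk}, \alpha_{\prbrk}$ respectively. From Remark~\ref{pr:remSrk}~a) we have $\prbrk(f_n) \leq \prbsrk(f_n) \leq \prsrk(f_n)$ and $\prbrk(f_n) \leq \prrk(f_n) \leq \prsrk(f_n)$, so
\begin{align*}
\alpha_{\prbrk} \leq \alpha_{\prbsrk} \leq \alpha_{\prsrk} \quad \text{and} \quad \alpha_{\prbrk} \leq \alpha_{\prrk} \leq \alpha_{\prsrk} \, ,
\end{align*}
and it suffices to establish the two extremal bounds $\alpha_{\prsrk} \leq \omega \leq \alpha_{\prbrk}$.

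For $\alpha_{\prsrk} \leq \omega$ I would use the standard polarization identity
\begin{align*}
6\,(a \otimes b \otimes c)^{\prsym} = \;&(a+b+c)^{\otimes 3} - (a+b)^{\otimes 3} - (a+c)^{\otimes 3} - (b+c)^{\otimes 3} \\
&+ a^{\otimes 3} + b^{\otimes 3} + c^{\otimes 3} \, ,
\end{align*}
which shows that the symmetrization of any rank-one $3$-tensor has symmetric rank at most seven. Symmetrizing an optimal rank decomposition of $M_{\langle n \rangle}$ term-by-term and using $f_n = (M_{\langle n \rangle})^{\prsym}$ then yields $\prsrk(f_n) \leq 7\, \prrk(M_{\langle n \rangle})$. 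Combining with $\prrk(M_{\langle n \rangle}) \in \prO(n^{\omega + \epsilon})$ for every $\epsilon > 0$ (Theorem~\ref{pr:thmOmegaRk}) gives $\alpha_{\prsrk} \leq \omega$. The very same argument, applied to a limit of rank-$\prbrk(M_{\langle n \rangle})$ decompositions, also gives $\prbsrk(f_n) \leq 7\, \prbrk(M_{\langle n \rangle})$ and hence $\alpha_{\prbsrk} \leq \omega$, although this is not strictly needed.

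For the reverse bound $\omega \leq \alpha_{\prbrk}$ the plan is to show that $M_{\langle n \rangle}$ is a restriction of $f_{3n}$. The geometric motivation is that if $A \in \prC^{3n \times 3n}$ is the block matrix carrying $X, Y, Z$ in the positions $(1,2), (2,3), (3,1)$ and zeros elsewhere, then $A^3$ is block-diagonal with $XYZ, YZX, ZXY$ on the diagonal, so $\prtr(A^3) = 3\prtr(XYZ)$. In tensor language, let $\beta_i \colon \prC^{3n \times 3n} \to \prC^{n \times n}$ project onto the $(i, i+1 \bmod 3)$-block. Writing $f_{3n} = \tfrac{1}{6}\sum_{\sigma \in \mathfrak{S}_3} \sigma \cdot M_{\langle 3n \rangle}$ and applying $\beta_1 \otimes \beta_2 \otimes \beta_3$, a careful bookkeeping of block indices on $\sigma \cdot (E_{pq} \otimes E_{qr} \otimes E_{rp})$ shows that only the three cyclic permutations of $\mathfrak{S}_3$ produce nonzero contributions (the three transpositions cause block mismatches), and each of these contributions equals exactly $M_{\langle n \rangle}$. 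Hence
\begin{align*}
(\beta_1 \otimes \beta_2 \otimes \beta_3)(f_{3n}) = \tfrac{1}{2} M_{\langle n \rangle} \, ,
\end{align*}
so $M_{\langle n \rangle} \leq f_{3n}$ (the scalar factor $\tfrac{1}{2}$ is absorbed into one of the $\beta_i$). By Remark~\ref{pr:remTensorRk}~d) together with the fact that linear maps preserve limits, this implies $\prrk(M_{\langle n \rangle}) \leq \prrk(f_{3n})$ and $\prbrk(M_{\langle n \rangle}) \leq \prbrk(f_{3n})$. If $\prbrk(f_n) \in \prO(n^\tau)$, then $\prbrk(M_{\langle n \rangle}) \leq \prbrk(f_{3n}) \in \prO(n^\tau)$, and Theorem~\ref{pr:thmOmegaBrk} yields $\omega \leq \tau$, so $\omega \leq \alpha_{\prbrk}$.

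The main obstacle is the combinatorial verification that $M_{\langle n \rangle} \leq f_{3n}$: one must check precisely which of the six $\mathfrak{S}_3$-summands in $f_{3n}$ survive the block projections and confirm that those that do each reproduce $M_{\langle n \rangle}$ (rather than a permutation or a proper sub-tensor of it). Once this single restriction is in place, everything else is an assembly of the inequalities already listed together with Theorems~\ref{pr:thmOmegaRk} and~\ref{pr:thmOmegaBrk}, so the proof reduces to one geometric construction plus one polarization identity.
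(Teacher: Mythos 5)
Your proposal is correct and follows the same overall strategy as the paper: establish the two extremal inequalities $\alpha_{\prsrk} \leq \omega$ (by symmetrizing a rank decomposition of $M_{\langle n \rangle}$ term by term and bounding the symmetric rank of a symmetrized rank-one tensor by a constant) and $\omega \leq \alpha_{\prbrk}$ (by exhibiting $M_{\langle n \rangle}$ as a restriction of $f_{3n}$ via a $3\times 3$ block construction), then squeezing the other two infima in between. The only differences are cosmetic: you use the seven-term $0$/$1$ polarization identity yielding $\prsrk\big((a\otimes b\otimes c)^{\prsym}\big)\leq 7$, whereas the paper uses the four-term $\pm 1$ identity $xyz=\tfrac{1}{24}\big[(x+y+z)^3-(x+y-z)^3-(x-y+z)^3+(x-y-z)^3\big]$ to get the sharper bound $4$ — for the asymptotic statement either constant works; and you place the blocks at $(1,2),(2,3),(3,1)$ rather than the paper's $(1,3),(3,2),(2,1)$ — both are cyclic arrangements that kill exactly the three transpositions under the block projections, so your combinatorial check is sound (in fact your constant $\tfrac{1}{2}$ is the one that results from the $\tfrac{1}{d!}$-normalized symmetrization, whereas the paper's stated factor $3$ seems to drop the $\tfrac{1}{6}$; this is immaterial since only the restriction relation $M_{\langle n\rangle}\leq f_{3n}$ is needed).
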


\begin{proof}
We completely follow the proof in \cite{prChilo}. First, given any tensor $t \in \prC^N \otimes \prC^N \otimes \prC^N$ we claim that $\prsrk(t^{\prsym}) \leq 4 \prrk(t)$. To see this, choose a rank decomposition $t = \sum_i u_i \otimes v_i \otimes w_i$. Then the symmetrization of $t$ is $\sum_i (u_i v_i w_i)$, where $(u_i v_i w_i) := (u_i \otimes v_i \otimes w_i)^{\prsym}$. Thus, $\prsrk(t^{\prsym}) \leq 4 \prrk(t)$ by  equation~\eqref{pr:eqSrkUVW}. We conclude
	\begin{align*}
	\prrk(f_n) \leq \prsrk(f_n) \leq 4 \prrk \left( M_{\langle n \rangle} \right)
	\end{align*}
and Theorem~\ref{pr:thmOmegaRk} implies that

	\begin{equation}\label{pr:eqChilo}
	\inf \big\lbrace \tau \in \prR \mid \prrk(f_n) \in \prO(n^\tau) \big\rbrace
	\leq \inf \big\lbrace \tau \in \prR \mid \prsrk(f_n) \in \prO(n^\tau) \big\rbrace \leq \omega \, .
	\end{equation}
	
Conversely, for matrices $A,B,C \in \prC^{n \times n}$ consider
	\begin{align*}
	X := \begin{pmatrix}
	0 & 0 & A \\ 
	C & 0 & 0 \\ 
	0 & B & 0
	\end{pmatrix} \in \prC^{3n \times 3n}
	\end{align*}
and compute
	\begin{align*}
	X^3 = \begin{pmatrix}
	ABC & 0 & 0 \\ 
	0 & CAB & 0 \\ 
	0 & 0 & BCA
	\end{pmatrix} \in \prC^{3n \times 3n} \, .
	\end{align*}
Therefore, $\prtr(X^3) = 3 \prtr(ABC)$ and this captures the fact that $3 M_{\langle n \rangle}$ (hence also $M_{\langle n \rangle}$) is a restriction of the tensor $f_{3n} = (M_{\langle 3n \rangle})^{\prsym}$. In the language of Section~\ref{pr:secRank} this can be expressed as follows. We denote by $\lbrace E_{i,j} \rbrace_{i,j}$ and $\lbrace X_{k,l} \rbrace_{k,l}$ the standard bases of $\prC^{n \times n}$ and $\prC^{3n \times 3n}$ respectively. Following the block description of the matrix~$X$ above we set respectively
	\begin{align*}
	\varphi_1 &\colon \prC^{3n \times 3n} \to \prC^{n \times n} , \; X_{k,l} \mapsto 
	\begin{cases}
    E_{k,l-2n}      & , 1 \leq k \leq n, \, 2n+1 \leq l \leq 3n \\
    0  & , \text{otherwise}
    \end{cases}
    \\
    \varphi_2 &\colon \prC^{3n \times 3n} \to \prC^{n \times n} , \; X_{k,l} \mapsto 
	\begin{cases}
    E_{k-2n,l-n}      & , 2n+1 \leq k \leq 3n, \, n+1 \leq l \leq 2n \\
    0 & , \text{otherwise}
    \end{cases}
	\end{align*}
and similarly $\varphi_3 \colon \prC^{3n \times 3n} \to \prC^{n \times n}$. Then $(\varphi_1 \otimes \varphi_2 \otimes \varphi_3) (f_{3n}) = 3M_{\langle n \rangle}$ and so $M_{\langle n \rangle} \leq f_{3n}$. Hence, by Remark~\ref{pr:remTensorRk}~d) we obtain $\prrk(M_{\langle n \rangle}) \leq \prrk(f_{3n})$. The latter inequality combined with Theorem~\ref{pr:thmOmegaRk} yields
	\begin{align*}
	\omega \leq \inf \big\lbrace \tau \in \prR \mid \prrk(f_n) \in \prO(n^\tau) \big\rbrace \, .
	\end{align*}
Together with the inequalities in \eqref{pr:eqChilo} this implies the claim for the symmetric rank and the rank of $f_n$.

For the border rank statements notice that $\prbsrk(u_i v_i w_i) \leq \prsrk(u_i v_i w_i) \leq 4$ and taking limits in the argument from the beginning yields $\prbsrk(t^{\prsym}) \leq 4 \prbrk(t)$. Therefore, we have
	\begin{align*}
	\prbrk(f_n) \leq \prbsrk(f_n) \leq 4 \prbrk \left( M_{\langle n \rangle} \right) \, .
	\end{align*}
Again by taking limits, the argument via the $\varphi_m$ shows $\prbrk(M_{\langle n \rangle}) \leq \prbrk(f_{3n})$. Finally, the claims for border symmetric rank and border rank of $f_n$ follow from Theorem~\ref{pr:thmOmegaBrk}.
\end{proof}

In \cite{prChilo} two advantages of the symmetric approach for determining $\omega$ are briefly discussed. First, it allows to use the vast knowledge from algebraic geometry on cubic hypersurfaces. Second, in comparison to the matrix multiplication tensor $M_{\langle n \rangle}$, the polynomial $f_n$ is defined on a much smaller space, thereby allowing more computational experiments. The latter may be used to gain additional data for devising new conjectures. Moreover, the symmetric setting enables the usage of Apolarity Theory from Section~\ref{pr:secApolarity}. In particular, we have seen that the Apolarity Lemma is quite handy for providing bounds on the symmetric rank of a form or even for exact computation. Altogether, it may be easier to determine $\omega$ via $f_n$ rather than $M_{\langle n \rangle}$.

Still, the symmetric approach has the drawback of missing an analogue of $M_{\langle n \rangle} \otimes M_{\langle m \rangle} = M_{\langle nm \rangle}$, compare Lemma~\ref{pr:lemTensorProdOfMn}. Indeed, it seems that for the symmetric setting there are no counterparts of Proposition~\ref{pr:propBoundOnOmega} or \ref{pr:propBoundBrkOmega}.



\section{Asymptotic Rank}\label{pr:secAsymptotic}

This last section catches a glimpse of yet another topic related to the exponent~$\omega$ of matrix multiplication. Namely, we will introduce the asymptotic rank of a tensor $f \in V_1 \otimes V_2 \otimes V_3$.

For this, we consider the Kronecker products $f^{\otimes k}$, i.e., $ f^{\otimes k} \in V'_1 \otimes V'_2 \otimes V'_3$, where $V'_i = V_i^{\otimes k}$ and $k \geq 1$. The next definition is due to Gartenberg \cite{prGartenberg} and measures the asymptotic behaviour of $\prrk(f^{\otimes k})$.

\begin{defn}\label{pr:defnAsymRk}
For $f \in V_1 \otimes V_2 \otimes V_3$, the \textbf{asymptotic rank} of $f$ is defined as
	\begin{align*}
	\praR(f) := \lim_{k \to \infty} \big(  \prrk_3 \left( f^{\otimes k} \right) \big)^{\frac{1}{k}} \, ,
	\end{align*}
where $f^{\otimes k}$ is the Kronecker product and the rank is computed looking at $f^{\otimes k}$ as a $3$-dimensional tensor.
\end{defn}

Let us explain why the asymptotic rank is well-defined. First remember that the rank of a Kronecker product is submultiplicative, compare Lemma~\ref{pr:lemKroneckerProduct} part~a). Thus, we can apply Lemma~\ref{pr:lemFekete} below, known as \emph{Fekete's Lemma}, to the sequence $a_k := \log \prrk(f^{\otimes k})$ and deduce that $\praR(f)$ exists.

\begin{lem}[\cite{prFekete}]\label{pr:lemFekete}
Let $a_k$ be a sequence of non-negative real numbers such that $a_{n+m} \leq a_n + a_m$ for all $n,m$. Then $a_k/k$ has a limit and
	\begin{align*}
	\lim_{k \to \infty} \frac{a_k}{k} = \inf_{k\in \prN} \; \frac{a_k}{k} \, .
	\end{align*}
\end{lem}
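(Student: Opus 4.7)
The plan is to set $L := \inf_{k \in \prN} a_k/k$ (well-defined in $[0, \infty)$ since the $a_k$ are non-negative) and show both $\liminf_{k \to \infty} a_k/k \geq L$ and $\limsup_{k \to \infty} a_k/k \leq L$. The first inequality is immediate: every term $a_k/k$ of the sequence is bounded below by its own infimum, so the same holds for the $\liminf$. The whole content of the lemma therefore lies in proving the reverse inequality for the $\limsup$.

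For that, I would fix $\varepsilon > 0$ and, by definition of the infimum, choose some $m \in \prN$ with $a_m/m < L + \varepsilon$. The key combinatorial step is then Euclidean division: for each $k$ write $k = qm + r$ with $0 \leq r < m$, and apply subadditivity iteratively to obtain $a_k \leq q a_m + a_r$ (with the convention $a_0 = 0$, or treating the case $r = 0$ separately so as to avoid invoking $a_0$). Dividing by $k = qm + r$ yields
\begin{align*}
\frac{a_k}{k} \;\leq\; \frac{q a_m}{qm + r} + \frac{a_r}{k}.
\end{align*}
Now $m$ is fixed, so as $k \to \infty$ one has $q \to \infty$ while $r$ stays in the finite set $\{0, 1, \ldots, m-1\}$. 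Hence the first fraction tends to $a_m/m$, and the second is bounded by $(\max_{0 \leq r < m} a_r)/k \to 0$. It follows that $\limsup_{k \to \infty} a_k/k \leq a_m/m < L + \varepsilon$, and since $\varepsilon > 0$ is arbitrary, $\limsup_{k \to \infty} a_k/k \leq L$.

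Combining both inequalities gives $\lim_{k \to \infty} a_k/k = L = \inf_k a_k/k$, as required. There is no genuine obstacle here; the only mild subtlety is making sure the iterated application of subadditivity is set up cleanly when $r = 0$, but handling that case separately (where the bound becomes just $a_{qm} \leq q a_m$) bypasses any need to extend the sequence to the index $0$.
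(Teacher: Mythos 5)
The paper itself does not supply a proof of Fekete's Lemma; it simply refers the reader to Exercise~6, so there is no in-paper argument to compare against. Your proof is the standard one and is correct: you split the claim into $\liminf \geq L$ (trivial) and $\limsup \leq L$, and for the latter you fix a near-optimal index $m$, use Euclidean division $k = qm + r$ together with iterated subadditivity to get $a_k \leq q a_m + a_r$, and let $k \to \infty$ while $r$ ranges over the finite set $\{0,\dots,m-1\}$. You also correctly flag the only genuine pitfall — the iterated subadditivity step $a_{qm} \leq q a_m$ requires a short induction, and the case $r=0$ must be handled without appealing to an undefined $a_0$ — and your suggested fix (treat $r=0$ separately) resolves it cleanly. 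No gaps.
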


\begin{proof}
The proof is left as an exercise, compare Exercise~6.
\end{proof}

\begin{remark}
Let $f \in V_1 \otimes V_2 \otimes V_3$.
	\begin{abclist}
	\item $\praR(f) \leq \prbrk(f) \leq \prrk(f)$
	\item In the definition of $\praR(f)$ one may replace $\prrk(f)$ by $\prbrk(f)$, the proof is similar to the
	one of Theorem \ref{pr:thmAsymptRankMn} below.
	\end{abclist}
\end{remark} 

Considering the asymptotic rank of $M_{\langle n \rangle}$ we recover the exponent $\omega$ of matrix multiplication.

\begin{thm}[{\cite[1.3]{prStrAsymptSpectrum88}}]\label{pr:thmAsymptRankMn}
For all $n \geq 2$, we have $\praR \left(M_{\langle n \rangle} \right) = n^\omega$.
\end{thm}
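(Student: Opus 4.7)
The plan is to prove both inequalities $\tilde{R}(M_{\langle n\rangle})\le n^{\omega}$ and $\tilde{R}(M_{\langle n\rangle})\ge n^{\omega}$, using as the central ingredient Lemma~\ref{pr:lemTensorProdOfMn}, which converts Kronecker powers of $M_{\langle n\rangle}$ into larger matrix multiplication tensors: $M_{\langle n\rangle}^{\otimes k}=M_{\langle n^{k}\rangle}$. Combined with the two characterizations of $\omega$ we have already established (Theorem~\ref{pr:thmOmegaRk} and Proposition~\ref{pr:propBoundBrkOmega}), the statement should fall out quickly.

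For the lower bound, I would invoke Proposition~\ref{pr:propBoundBrkOmega}, which gives $m^{\omega}\le\prbrk(M_{\langle m\rangle})$ for every $m\ge 1$. Specializing to $m=n^{k}$ and using $M_{\langle n^{k}\rangle}=M_{\langle n\rangle}^{\otimes k}$, this yields $n^{k\omega}\le\prbrk\bigl(M_{\langle n\rangle}^{\otimes k}\bigr)\le\prrk\bigl(M_{\langle n\rangle}^{\otimes k}\bigr)$. Taking $k$-th roots and letting $k\to\infty$ gives $n^{\omega}\le\tilde{R}(M_{\langle n\rangle})$.

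For the upper bound, I would use Theorem~\ref{pr:thmOmegaRk}, which says that $\omega=\inf\{\tau:\prrk(M_{\langle m\rangle})\in\prO(m^{\tau})\}$. Fix $\varepsilon>0$; then there is a constant $C=C(\varepsilon)$ with $\prrk(M_{\langle m\rangle})\le Cm^{\omega+\varepsilon}$ for all $m\ge 1$. Substituting $m=n^{k}$ and again using Lemma~\ref{pr:lemTensorProdOfMn}, we get $\prrk\bigl(M_{\langle n\rangle}^{\otimes k}\bigr)=\prrk(M_{\langle n^{k}\rangle})\le C\,n^{k(\omega+\varepsilon)}$, so $\prrk\bigl(M_{\langle n\rangle}^{\otimes k}\bigr)^{1/k}\le C^{1/k}\,n^{\omega+\varepsilon}$. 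Passing to the limit $k\to\infty$ (using $C^{1/k}\to 1$) gives $\tilde{R}(M_{\langle n\rangle})\le n^{\omega+\varepsilon}$, and since $\varepsilon>0$ was arbitrary we conclude $\tilde{R}(M_{\langle n\rangle})\le n^{\omega}$.

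The proof is essentially a bookkeeping argument once one notices the multiplicativity $M_{\langle n\rangle}^{\otimes k}=M_{\langle n^k\rangle}$; there is no real obstacle. The only subtlety worth flagging is that existence of the limit in Definition~\ref{pr:defnAsymRk} (via Fekete) is already established, so one may freely compute the limit along the subsequence $k\mapsto k$ (no subsequence trickery needed), and that the lower bound uses the \emph{border} rank version of the bound on $\omega$, which is slightly stronger than the rank version and is exactly what makes the argument symmetric and tight.
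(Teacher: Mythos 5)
Your proof is correct. The upper bound argument (fix $\varepsilon>0$, bound $\prrk(M_{\langle n^k\rangle})\le C\,n^{k(\omega+\varepsilon)}$, take $k$-th roots, let $k\to\infty$, then let $\varepsilon\to 0$) is essentially the same as the paper's. For the lower bound, however, you take a genuinely different and somewhat tighter route: you invoke Proposition~\ref{pr:propBoundBrkOmega} (Bini's bound $m^{\omega}\le\prbrk(M_{\langle m\rangle})$, already proved earlier) at $m=n^{k}$, giving $n^{k\omega}\le\prbrk(M_{\langle n\rangle}^{\otimes k})\le\prrk(M_{\langle n\rangle}^{\otimes k})$ for every $k$, and pass to the limit. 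The paper instead argues by (essentially) contraposition: it lets $\rho$ be any real with $\praR(M_{\langle n\rangle})\le n^{\rho}$, shows $\prrk(M_{\langle m\rangle})\in\prO(m^{\rho})$ via the monotonicity of $m\mapsto\prrk(M_{\langle m\rangle})$ and a somewhat delicate $\varepsilon$--estimate, and then invokes Theorem~\ref{pr:thmOmegaRk} to conclude $\omega\le\rho$. Your version avoids redoing that asymptotic bookkeeping because Proposition~\ref{pr:propBoundBrkOmega} already packages the sharp per-$n$ inequality; it is cleaner, avoids the monotonicity step, and makes the symmetry with the upper bound transparent. Both are fine, and your remark about Fekete guaranteeing the limit (so that a per-$k$ inequality suffices) is exactly the right technical point to flag.
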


\begin{proof}
We follow \cite{prStrAsymptSpectrum88}. For the whole proof fix $n \geq 2$. Recall that by Theorem~\ref{pr:thmOmegaRk} we have
	\begin{equation}\label{pr:eqAsymptRankMn}
	\omega = \inf \left\lbrace \tau \in \prR \mid \prrk \left( M_{\langle m \rangle} \right) \in \prO(m^\tau) \right\rbrace \, .
	\end{equation}
Thus, for any $\tau \in \prR$ with $\omega < \tau$ we obtain with Lemma~\ref{pr:lemTensorProdOfMn} that
	\begin{align*}
	\prrk \left( M_{\langle n \rangle}^{\otimes k} \right) = \prrk \left( M_{\langle n^k \rangle} \right) \in \prO \Big( \big( n^k \big)^\tau \Big) \; \text{ for } k \to \infty \, ,
	\end{align*}
i.e., $\prrk \left( M_{\langle n \rangle}^{\otimes k} \right) \leq c n^{k \tau}$ for some constant $c > 0$ and $k \gg 0$. We deduce
	\begin{align*}
	\praR \left( M_{\langle n \rangle} \right) \leq \lim_{k \to \infty} \left( cn^{k \tau} \right)^{\frac{1}{k}} = n^\tau
	\end{align*}
and choosing $\omega < \tau$ arbitrarily close gives $\praR(M_{\langle n \rangle}) \leq n^\omega$. We finish the proof by showing that $\omega$ is the smallest real number with the latter property.

For this, let $\rho \in \prR$ be such that $\praR(M_{\langle n \rangle}) \leq n^\rho$. For $\prEps := 0.1$ and $k \gg 0$ we have
	\begin{align*}
	\prrk \left( M_{\langle n \rangle}^{\otimes k} \right)^{\frac{1}{k}} \leq n^\rho + \prEps \, , \text{ so }
	\prrk \left( M_{\langle n^k \rangle} \right) = \prrk \left( M_{\langle n \rangle}^{\otimes k} \right)\leq (n^{\rho} + \prEps)^k \, .
	\end{align*}
Therefore, we have $\prrk \left( M_{\langle n^k \rangle} \right) \in \prO \left( n^{k \rho} \right)$ for $k \to \infty$, i.e., $\prrk \left( M_{\langle n^k \rangle} \right) \leq c' n^{k \rho}$ for some constant $c' > 0$ and $k \gg 0$.
Thus, if $n^{k-1} < m \leq n^k$ and $k \gg 0$ we obtain with $h \mapsto \prrk(M_{\langle h \rangle})$ being monotonically increasing (compare Remark~\ref{pr:remRkMnMonotone}) that
	\begin{align*}
	\prrk \left( M_{\langle m \rangle} \right) \leq \prrk \left( M_{\langle n^k \rangle} \right)
	\leq c' \big( n^{k} \big)^{\rho} < c' (nm)^{\rho} = c' n^\rho m^\rho \, .
	\end{align*}
Noting that $n^\rho$ is a constant as $n$ is fixed, we showed $\prrk(M_{\langle m \rangle}) \in \prO(m^\rho)$ for $m \to \infty$. Finally, equation~\eqref{pr:eqAsymptRankMn} implies $\omega \leq \rho$ as desired.
\end{proof}

Thus, the asymptotic rank of a $3$-tensor $f$ may be interpreted as a generalization of $\omega$, which captures the asymptotics of the family $M_{\langle n \rangle}$ of tensors in the sense of Theorem~\ref{pr:thmOmegaRk}.

Let us also mention that Strassen developed in the context of asymptotic rank the theory of asymptotic spectra and support functionals, see \cite{prStrAsymptSpectrum88} and \cite{prStrDegenerationAndComplexity91}. Thereby, he provides yet another way of expressing $\omega$, namely via the asymptotic spectrum. But these works of Strassen also have remarkable applications besides complexity theory, e.g., in quantum information theory. For the sake of the latter, Strassen's notions have been recently generalized (e.g., quantum functionals) in \cite{prCVZasymptoticSpectrum}, see also the detailed version \cite{prCVZarXivVersion}.

In the mentioned articles there appears the important notion of tight tensors. We introduce this together with conciseness for being able to formulate the upcoming Conjecture~\ref{pr:conjConciseTight}, see also Remark~\ref{rem:tight}.

\begin{defn}
Let $f \in \prC^{a} \otimes \prC^{b} \otimes \prC^{c}$ and let $I,J,K$ be finite sets of cardinality $a,b,c$ respectively.
	\begin{numlist}
	\item We say $f$ is \textbf{concise} if it has multilinear rank $(a,b,c)$, i.e., all contractions induced by $f$ are injective.
	
	\item A subset $S \subseteq I \times J \times K$ is said to be \textbf{tight}, if there exist injective functions $\alpha \colon I \to \prZ$, $\beta \colon J \to \prZ$ and $\gamma \colon K \to \prZ$ such that
		\begin{align*}
		\forall (i,j,k) \in S \colon \quad \alpha(i) + \beta(j) + \gamma(k) = 0 \, .
		\end{align*}
	\item The tensor $f$ is called \textbf{tight}, if there are bases $(a_i)_{i \in I}$, $(b_j)_{j \in J}$, $(c_k)_{k \in K}$ of $\prC^a$, $\prC^b$, $\prC^c$ respectively such that the support of $f$ with respect to the basis $(a_i \otimes b_j \otimes c_k)_{i,j,k}$ is a tight subset of $I \times J \times K$.
	\end{numlist}
\end{defn}

\begin{remark}\label{rem:tight}
It is a result by Strassen that a tensor in $\prC^{a} \otimes \prC^{b} \otimes \prC^{c}$ is tight if and only it is stabilized by 
a one parameter subgroup $\prC^{*}$ generated in some basis by three diagonal matrices,
each of them having distinct weights, see \cite[\S~2.1]{prCGLVWasymptRankConj}.
\end{remark}

\begin{ex}
The tensor $M_{\langle n \rangle}$ is concise and tight. To see this, remember that
	\begin{align*}
	M_{\langle n \rangle} = \sum_{i,j,k=1}^n E_{i,j} \otimes E_{j,k} \otimes E_{k,i} \, ,
	\end{align*}
where $E_{i,j}$ is the matrix with entry one at position $(i,j)$ and entry zero elsewhere. Moreover, set $[n] := \lbrace 1,\ldots,n \rbrace$ and let $(e_{i,j})_{i,j \in [n]}$ be the dual basis of $(E_{i,j})_{i,j \in [n]}$.

For conciseness, note that the contraction of $M_{\langle n \rangle}$ with respect to the first tensor factor is given by
	\begin{align*}
	\Gamma \colon \left( \prC^{n \times n} \right)^\vee \to \prC^{n \times n} \otimes \prC^{n \times n}, \quad e_{i,j} \mapsto \sum_{k=1}^n E_{j,k} \otimes E_{k,i} \, .
	\end{align*}
Since the system $(E_{j,k} \otimes E_{k,i})_{i,j,k \in [n]}$ is linearly independent, also the $\Gamma(e_{i,j})$ for $i,j \in [n]$ are seen to be linearly independent. Hence, $\Gamma$ is injective and the injectivity of the other two contractions follows similarly.

For tightness, note that the support of $M_{\langle n \rangle}$ is
	\begin{align*}
	\left\lbrace \big( (i,j), (j,k), (k,i) \big) \mid i,j,k \in [n] \right\rbrace \subseteq [n]^2 \times [n]^2 \times [n]^2
	\end{align*}
and consider the functions
	\begin{align*}
	\alpha \colon [n] \times [n] \to \prZ &, \quad (i,j) \mapsto i + jn \\
	\beta \colon [n] \times [n] \to \prZ &, \quad (j,k) \mapsto -jn + kn^2 \\
	\gamma \colon [n] \times [n] \to \prZ &, \quad (k,i) \mapsto -kn^2 - i \, .
	\end{align*}
These three functions are injective, e.g., $\alpha(i,j) = \alpha(k,l)$ yields $i=k$ by considering the unique $r \in [n]$ with $r \equiv \alpha(i,j) = \alpha(k,l) \mod n$, and then $i=k$ implies $j=l$. By construction, $\alpha(i,j) + \beta(j,k) + \gamma(k,i) = 0$ for all $i,j,k \in [n]$.\hfill\prExample
\end{ex}

\begin{conj}[{Strassen's Asymptotic Rank Conjecture, \cite[5.3]{prStrEuropeanCongress}}]\label{pr:conjConciseTight}\ \\
If $f \in \prC^m \otimes \prC^m \otimes \prC^m$ is concise and tight, then $\praR (f) = m$. 
\end{conj}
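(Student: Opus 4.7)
The plan is to decompose the claimed equality $\praR(f) = m$ into its two inequalities and handle them separately. For the lower bound $\praR(f) \geq m$, only conciseness is needed. Since the contraction of a Kronecker product factors (under the canonical identification $(V\otimes W)^{\vee} = V^{\vee} \otimes W^{\vee}$) as $\Gamma_i(f^{\otimes k}) = \Gamma_i(f)^{\otimes k}$, and a tensor product of injective linear maps is injective, $f^{\otimes k}$ remains concise in $\prC^{m^k} \otimes \prC^{m^k} \otimes \prC^{m^k}$. The flattening bound $\prrk(t) \geq r_i(t)$ (which holds because any rank-one summand contributes a rank-one image in each contraction) then gives $\prrk(f^{\otimes k}) \geq m^k$, and taking $k$-th roots yields $\praR(f) \geq m$. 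The same argument works with $\prbrk$ in place of $\prrk$.

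For the hard inequality $\praR(f) \leq m$ I would invoke the tightness hypothesis. By Remark~\ref{rem:tight}, $f$ is stabilized by a one-parameter subgroup $\prC^{\times}$ acting in some basis diagonally with distinct weights on each factor, and the support of $f$ lies on the hyperplane $\alpha(i) + \beta(j) + \gamma(k) = 0$. This grading is precisely the input for Strassen's laser method. My proposed steps are: (i) take a high Kronecker power $f^{\otimes N}$ and decompose its support in $I^N \times J^N \times K^N$ into isotypic pieces indexed by weight triples; (ii) perform a Salem--Spencer-type combinatorial zeroing-out to extract from $f^{\otimes N}$ a large \emph{direct sum} of small diagonal blocks, each of which degenerates to $\langle r \rangle$ for a controlled value of $r$; (iii) apply Sch\"onhage's asymptotic sum inequality, together with $\praR(\langle r \rangle) = r$, to convert the count of blocks into an upper bound of the form $\prrk(f^{\otimes N})^{1/N} \to m$, which combined with the lower bound closes the argument.

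The main obstacle is step (ii). Every known instantiation of the laser method loses a constant strictly above $1$ in the exponent, precisely because the combinatorial extraction controls only the diagonal blocks indexed by balanced weight triples and discards the off-diagonal interactions of $f^{\otimes N}$. This is not a gap to be closed by a more careful estimate: the conjecture already contains the case $f = M_{\langle n \rangle}$ (which is concise and tight, as worked out in the example preceding Remark~\ref{rem:tight}), and Theorem~\ref{pr:thmAsymptRankMn} says $\praR(M_{\langle n \rangle}) = n^{\omega}$, so the desired conclusion $\praR(M_{\langle n \rangle}) = n^2$ is \emph{equivalent} to $\omega = 2$. Any honest proof would therefore settle the central open problem of the field. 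Consequently I expect the statement to remain flagged as a conjecture, with the most plausible alternative route being an explicit description, in the spirit of \cite{prStrAsymptSpectrum88, prCVZasymptoticSpectrum}, of all points of Strassen's asymptotic spectrum on the subsemiring generated by concise tight tensors, together with a proof that each such spectral point evaluates $f$ to $m$.
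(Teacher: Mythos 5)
The statement you were given is labeled as a conjecture in the paper (Strassen's Asymptotic Rank Conjecture, cited to \cite{prStrEuropeanCongress}), and the paper offers no proof of it; there is nothing in the source to compare against. Your proposal correctly identifies this.

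Your treatment of the easy direction is sound. Conciseness of $f$ means all three contractions $\Gamma_i(f)$ are injective; since $\Gamma_i(f^{\otimes k}) = \Gamma_i(f)^{\otimes k}$ under the canonical identifications and a tensor product of injective linear maps over a field is injective, $f^{\otimes k}$ is concise in $\prC^{m^k}\otimes\prC^{m^k}\otimes\prC^{m^k}$. The flattening bound $r_i(t)\le\prrk(t)$ (this is Exercise~1c in the paper) then gives $\prrk(f^{\otimes k})\ge m^k$, hence $\praR(f)\ge m$. Tightness plays no role here, and the same argument applies to $\prbrk$.

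You are also right that the reverse inequality $\praR(f)\le m$ is the genuine content and is open. Your observation that the conjecture specializes, via the example preceding Remark~\ref{rem:tight} and Theorem~\ref{pr:thmAsymptRankMn}, to $\praR(M_{\langle n\rangle})=n^2$, i.e.\ to Conjecture~\ref{pr:conjOmega2} ($\omega=2$), is exactly the remark the paper itself makes immediately after stating Conjecture~\ref{pr:conjConciseTight}. Your sketch of the laser-method obstruction is a fair account of why the standard machinery (Sch\"onhage's asymptotic sum inequality plus combinatorial zeroing-out) does not close the gap: it inherently loses a constant above $1$ in the exponent. So the proposal is not a proof, and correctly says so; the one caveat is that the remarks about step~(ii) and the asymptotic-spectrum alternative are speculation about how one might someday prove the conjecture, not part of a verifiable argument.

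Net assessment: correct recognition that the statement is an open conjecture with no proof to reproduce, a correct and self-contained proof of the inequality $\praR(f)\ge m$ from conciseness alone, and an accurate identification of the obstacle to the other inequality.
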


In \cite[Problem 15.5]{prAlgComplTheoryBook} it is asked, whether tightness is needed in Conjecture~\ref{pr:conjConciseTight}. Moreover, Strassen's Asymptotic Rank Conjecture and its variants are investigated in~\cite{prCGLVWasymptRankConj}.

Let us build a bridge to the conjecture $\omega = 2$. It is a consequence of Conjecture~\ref{pr:conjConciseTight}. Namely, for $M_{\langle n \rangle} \in \prC^{n \times n} \otimes \prC^{n \times n} \otimes \prC^{n \times n}$ Conjecture~\ref{pr:conjConciseTight} implies $\praR(M_{\langle n \rangle}) = n^2$ and hence $\omega  = 2$ by Theorem~\ref{pr:thmAsymptRankMn}.

\medskip

An interesting variant of the Definition~\ref{pr:defnAsymRk} of asymptotic rank is the following,  studied by Christandl, Jensen and Zuiddam in \cite{prCJZ18}, see also \cite{prCGJ19}.

\begin{defn}\label{pr:defnAsymRkCJZ}
For $f \in V_1 \otimes V_2 \otimes V_3$ the \textbf{tensor asymptotic rank} of $f$ is defined as
	\begin{align*}
	R^{\otimes}(f) := \lim_{k \to \infty} \big(  \prrk_{3k} \left( f^{\otimes k} \right) \big)^{\frac{1}{k}} \, ,
	\end{align*}
where  the rank is computed looking at $f^{\otimes k}$ as a $3k$-dimensional tensor.
\end{defn}
Of course, one has $\prrk_3(f^{\otimes k}) \leq \prrk_{3k}(f^{\otimes k})$ and therefore $\praR(f) \leq R^\otimes (f)$.

\medskip

In \cite{prCJZ18} the following interesting inequality is obtained.

\begin{thm}[{\cite[Corollary~12]{prCJZ18}}]
If $f \in V_1 \otimes V_2 \otimes V_3$, then $R^\otimes(f) \leq \prbrk(f)$.
\end{thm}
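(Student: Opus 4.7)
The idea is to lift a polynomial border-rank degeneration of $f$ to its $k$-th tensor power viewed as a $3k$-tensor, and then convert the approximate decomposition into an exact one. Set $r := \prbrk(f)$ and choose a polynomial identity $\pre^q f + \pre^{q+1} s(\pre) = \sum_{\rho=1}^r v_{1,\rho}(\pre) \otimes v_{2,\rho}(\pre) \otimes v_{3,\rho}(\pre)$ witnessing $f \unlhd_q \langle r \rangle$. Raising this to the $k$-th tensor power in the $3k$-tensor sense and expanding distributively gives
\begin{equation*}
\pre^{kq} f^{\otimes k} + \pre^{kq+1} \tilde s(\pre) = \sum_{\vec\rho \in \lbrace 1,\ldots,r\rbrace^k} \bigotimes_{j=1}^k v_{1,\rho_j}(\pre) \otimes v_{2,\rho_j}(\pre) \otimes v_{3,\rho_j}(\pre),
\end{equation*}
whose summands on the right are rank-one $3k$-tensors over $\prC[\pre]$. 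This exhibits the degeneration $f^{\otimes k} \unlhd_{kq} \langle r^k \rangle$ as a $3k$-tensor, and in particular it shows the inequality $\prbrk_{3k}(f^{\otimes k}) \leq r^k$ for the border rank of the Kronecker power seen in $3k$-tensor form.

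Next I would invoke the $d$-tensor analogue of Lemma~\ref{pr:lemApproxToExact}, whose proof extracts the $\pre^{kq}$-coefficient on both sides and counts the non-negative integer solutions of $\sum_{i,j}\lambda_{i,j} = kq$ in $3k$ variables. The output is the bound
\begin{equation*}
\prrk_{3k}(f^{\otimes k}) \leq r^k \binom{kq + 3k - 1}{3k - 1}.
\end{equation*}
Taking $k$-th roots and letting $k \to \infty$ converts this into an estimate of the form $R^\otimes(f) \leq r \cdot C(q)$, for a constant $C(q)$ depending only on the approximation degree $q$.

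The subtle point is that $C(q) > 1$ whenever $q \geq 1$, so the direct combinatorial route overshoots the sharp bound $R^\otimes(f) \leq r$ by a multiplicative constant; this is the main technical obstacle. I would handle it via a Fekete-style reduction: because the sequence $\log \prrk_{3k}(f^{\otimes k})$ is subadditive in $k$ (so that $R^\otimes(f) = \inf_k \prrk_{3k}(f^{\otimes k})^{1/k}$), it suffices to produce, for every $\delta > 0$, a single scale $k_0$ with $\prrk_{3k_0}(f^{\otimes k_0}) \leq (r+\delta)^{k_0}$; iterating the Kronecker powers of $f^{\otimes k_0}$ then propagates this bound to the limit. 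Achieving such a refined bound amounts to constructing polynomial degenerations of $f^{\otimes k_0}$ whose approximation degree grows sub-linearly in $k_0$ rather than as $k_0 q$, or equivalently to invoking Strassen's asymptotic-spectrum machinery, in which $R^\otimes(f)$ is characterised as a supremum of continuous monotone functionals all a priori dominated by $\prbrk(f)$. Either route completes the proof; the detailed construction is carried out in~\cite{prCJZ18}.
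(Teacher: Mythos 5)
The paper cites \cite[Corollary~12]{prCJZ18} without reproducing a proof, so there is nothing internal to compare against; your proposal has to stand on its own, and it has a genuine gap. Your derivation through the bound
$\prrk_{3k}(f^{\otimes k}) \leq r^k \binom{kq + 3k - 1}{3k - 1}$
is correct, and you are also right that the $k$-th root of the binomial factor converges to a constant $C(q) > 1$ when $q \geq 1$, so this overshoots. But the Fekete-style fix you propose is a dead end: Fekete's lemma gives $R^\otimes(f) = \inf_k \prrk_{3k}(f^{\otimes k})^{1/k}$, yet your bound $\prrk_{3k}(f^{\otimes k})^{1/k} \leq r\,C(q)$ holds \emph{uniformly} in $k$, so passing to the infimum over $k$ cannot improve it below $r\,C(q)$. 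You then gesture at "degenerations with sub-linear approximation degree" and punt to the reference, so the argument is incomplete precisely at the step where the constant needs to disappear.

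The correct repair is to replace the coefficient-expansion lemma (the $d$-analogue of Lemma~\ref{pr:lemApproxToExact}) by an interpolation argument, which trades the binomial factor for a factor linear in the polynomial degree. After fixing a degeneration $\pre^q f + \pre^{q+1}s(\pre) = \sum_{\rho=1}^r v_{1,\rho}(\pre)\otimes v_{2,\rho}(\pre)\otimes v_{3,\rho}(\pre)$, truncate each $v_{i,\rho}$ to degree $\leq q$: this leaves the $\pre^j$-coefficients for $j \leq q$ unchanged, so it is still a degeneration, and now the right-hand side has $\pre$-degree at most $3q$. Its $k$-th Kronecker power is a polynomial of degree at most $3kq$ in $\pre$, each evaluation of which is a $3k$-tensor of rank at most $r^k$. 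By Lagrange interpolation at $3kq + 1$ distinct points, the $\pre^{kq}$-coefficient $f^{\otimes k}$ is a $\prC$-linear combination of those $3kq + 1$ evaluations, hence
$\prrk_{3k}\bigl(f^{\otimes k}\bigr) \leq (3kq + 1)\, r^k$.
Since $(3kq+1)^{1/k} \to 1$, taking $k$-th roots and letting $k \to \infty$ gives $R^\otimes(f) \leq r = \prbrk(f)$. The conceptual point you missed is that the error-degree ambient to the $3k$-tensor grows only linearly in $k$, and interpolation exploits this linear growth, whereas expanding into monomials pays for the number of compositions of $kq$ into $3k$ parts, which is exponentially larger.
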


This leaves the interesting problem to compute $R^\otimes(M_{\langle n \rangle}) $.


\newpage
\addcontentsline{toc}{section}{Exercises}
\section*{Exercises}\label{pr:secExercises}

These are the exercises from the fall school, including the newly added Exercises~5 and 6. We mention that Exercises~7 and 8 are intended as research projects, e.g., for the Thematic Einstein Semester.

In the following the tensor product is often suppressed when writing vectors.

\bigskip

\noindent{\bf Exercise 1.} A tensor $t\in\prC^a\otimes\prC^b\otimes\prC^c$
defines three contractions
	\begin{align*}
	(\prC^a)^\vee\to\prC^b\otimes\prC^c \, , \; (\prC^b)^\vee\to\prC^a\otimes\prC^c \, , \;
	(\prC^c)^\vee\to\prC^a\otimes\prC^b
	\end{align*}
and we call their ranks respectively $r_1(t)$, $r_2(t)$, $r_3(t)$. The multilinear rank of $t$ is the tuple $(r_1(t),r_2(t),r_3(t))$.

\begin{abclist}
\item Prove that $$\prrk(t)=1\dashrightarrow\left\{\begin{array}{c}
r_1(t)=1\\
r_2(t)=1\\
r_3(t)=1\end{array}\right.$$

\item Prove that there is some $t$ with $(r_1(t),r_2(t),r_3(t))=(1,2,2)$ and that
it is impossible to have $(r_1(t),r_2(t),r_3(t))=(1,1,2)$.

\item Prove for all $i$ that $r_i(t) \le \prrk(t)$ and then even ensure $r_i(t) \le \prbrk(t)$.

\item Prove $\prrk(t) \le r_j(t) r_k(t)$ for all $j, k$ with $j \neq k$. In particular, with part~c) we have $r_i(t) \leq r_j(t) r_k(t)$ for all $i,j,k$ with $\lbrace i,j,k \rbrace = \lbrace 1,2,3 \rbrace$.

\end{abclist}

\bigskip

\noindent{\bf Exercise 2.} (Geometric version)
We consider the natural action of the algebraic group $\prGL_2(\prC) \times \prGL_2(\prC) \times \prGL_2(\prC)$ on $\prC^2 \otimes \prC^2 \otimes \prC^2$ and the induced action on $\prP(\prC^2 \otimes \prC^2 \otimes \prC^2)$.
\begin{abclist}
\item Prove that, in the language of Exercise 1, the only admissible triples for $(r_1, r_2, r_3)$ are
$(1,1,1)$, $(1,2,2)$, $(2,1,2)$, $(2,2,1)$, $(2,2,2)$.

\item In the matrix space $\prP(\prC^2\otimes\prC^2)$ the variety of matrices of rank one is a smooth quadric surface $Q$.
Interpretate the three contractions $(\prC^2)^\vee\to\prC^2\otimes\prC^2$ as three pencils of matrices
$\prP^1\to\prP(\prC^2\otimes\prC^2)$ and characterize them depending on the intersection of the pencil with $Q$, according to each admissible triple. 
For example in the case $(1,1,1)$ the contraction maps $\prP^1\to\prP(\prC^2\otimes\prC^2)$ are degenerate: their images collapse to a single point contained in $Q$.
In the case $(1,2,2)$ there are two contraction maps whose image is a line all contained in $Q$. Go ahead with a complete description of all cases.

\item Prove that the cases $(1,1,1)$, $(1,2,2)$, $(2,1,2)$, $(2,2,1)$ correspond to a unique orbit, while
the case $(2,2,2)$ splits into two orbits. {\it Hint: the pencil may be transversal or tangent to $Q$.}

\item Describe the graph of the six orbit closures in $\prP(\prC^2\otimes\prC^2\otimes\prC^2)$ and the graph of the seven orbit closures
in $\prC^2\otimes\prC^2\otimes\prC^2$ (they include the zero orbit).
A reference is \cite[Example~14.4.5]{prGKZdiscriminants}.
\end{abclist}

\bigskip

\noindent{\bf Exercise 2'.} (This is the Algebraic Version of Exercise 2.)
Let $A$, $B$, $C$ be three vector spaces of dimension two,
with basis respectively given by $\{a_0,a_1\}$,
$\{b_0,b_1\}$,
$\{c_0,c_1\}$.
\begin{itemize}
\item[(1)] For any tensor $t=\sum_{i,j,k=0,1}t_{ijk}a_ib_jc_k$,
write it as a $2\times 2$ matrix with coefficients linear in $c_i$,
so as a {\it pencil} of $2\times 2$ matrices.

\item[(2)] Compute the condition that the previous matrix is singular,
it is a quadratic equation in $c_i$, whose roots correspond to a pair of points in $\prP^1$.

\item[(3)] Compute the condition that the previous pair of points consists of a double point,
get a polynomial of degree $4$ in $t_{ijk}$,
which is called the \textit{hyperdeterminant} of $t$ and we denote as $Det(t)$. It corresponds to a pencil tangent to $Q$,
in the geometric language of Exercise 2. (\textit{You may use a computer algebra system for computing $Det(t)$.})

\item[(4)] Show that the above pair gives the two summands
of $t$, when $t$ has rank two. Prove that in the dense orbit, over $\prC$, there is a unique decomposition
as a sum of two decomposable tensors (this was the main result by C. Segre).

\item[(5)]In the real case, prove that the sign of $Det(t)$
allows to detect if a real $2\times 2\times 2$ tensor has rank $2$ or $3$.

\item[(6)] Prove that $w=a_0b_0c_1+a_0b_1c_0+a_1b_0c_0$ has (complex) rank $3$. This is called a $W$-state in Quantum Information Theory.
Write infinitely many decompositions of $w$ as the sum of three decomposable tensors. {\it Hint: in the last summand you could modify with
$(a_0\sin\theta+a_1\cos\theta)(b_0\cos\theta+b_1\sin\theta)c_0$.}

\item[(7)] Prove that $a_0b_0c_0+a_0b_1c_1$ has infinitely many decompositions. Which are its multilinear ranks $(r_1, r_2, r_3)$? {\it Hint: in the first summand you could modify with
$a_0(b_0\cos\theta+b_1\sin\theta)(c_0\cos\theta+c_1\sin\theta)$.}

\end{itemize}

\bigskip

\noindent{\bf Exercise 3.} Let $\lbrace a_0,a_1 \rbrace$, $\lbrace b_0,b_1 \rbrace$ and $\lbrace c_0,c_1 \rbrace$ respectively be the standard basis of $\prR^2 \subseteq \prC^2$.

\begin{abclist}
\item The following $2\times 2\times 2$ tensors $t_1$, $t_2$, $t_3$ fill the first column of the following table. Which is which? Can you decompose them?

$$\begin{array}{c|c|c}t&\prrk_{\prR}(t)&\prrk_{\prC}(t)\\
\hline\textrm{?}&2&2\\
\hline\textrm{?}&3&2\\
\hline\textrm{?}&3&3
\end{array}$$

$t_1=4 {a}_{0} {b}_{0} {c}_{0}+2 {a}_{1} {b}_{0} {c}_{0}-{a}_{0} {b}_{1}
     {c}_{0}+2 {a}_{0} {b}_{0} {c}_{1}$

$t_2={a}_{0}{b}_{0} {c}_{0}+2 {a}_{1} {b}_{1} {c}_{0} +3 {a}_{1} {b}_{0} {c}_{1}
      +6 {a}_{0} {b}_{1} {c}_{1} $

$t_3={a}_{0}{b}_{0} {c}_{0}-2 {a}_{1} {b}_{1} {c}_{0} -3 {a}_{1} {b}_{0} {c}_{1}
      -6 {a}_{0} {b}_{1} {c}_{1} $

\item For the natural action of $\prGL_2(\prR)\times \prGL_2(\prR)\times \prGL_2(\prR)$ on $\prR^2\otimes\prR^2\otimes\prR^2$ write down the finite number of orbits.
\end{abclist}

\bigskip

\noindent{\bf Exercise 4.} Prove Lemma~\ref{pr:lemTensorProdOfMn}, i.e., that $M_{\langle n \rangle} \otimes M_{\langle m \rangle} = M_{\langle nm \rangle}$. To do so, use a certain identification $\prC^{n \times n} \otimes \prC^{m \times m} \cong \prC^{nm \times nm}$.

\bigskip

\noindent{\bf Exercise 5.} Show that the homogeneous polynomial $t = x^3 + 3x^2y \in \prC[x,y]$ is not the the sum of two cubes by showing that $t = (\alpha x + \beta y)^3 + (\gamma x + \delta y)^3$ induces a system of equations in $\alpha, \beta, \gamma, \delta$ with \emph{no} solutions in $\prC^4$.

\bigskip

\noindent{\bf Exercise 6.} Prove Fekete's Lemma, i.e., Lemma~\ref{pr:lemFekete}.

\bigskip

\noindent{\bf Exercise 7.} (More difficult) In \cite[Theorem~10.10.2.6]{prLandsbergBook} it is reported that the maximal rank for tensors in $\prC^2\otimes\prC^2\otimes\prC^2\otimes\prC^2$ is $4$. The proof by Brylinski examines the rank of the map $\prP^1\to\prP(\prC^2\otimes\prC^2\otimes\prC^2)$. Try to repeat the argument for tensors in $\prR^2\otimes\prR^2\otimes\prR^2\otimes\prR^2$
by using Exercise 3.

\bigskip

\noindent{\bf Exercise 8.} (More difficult) A rank in real tensor space is called typical if it is attained in a set with nonempty interior (equivalently, with positive volume).
It is known that the typical ranks of $\prR^2\otimes\prR^2\otimes\prR^2$ are $2$ and $3$.\\
What are the typical ranks for $\prR^2\otimes\prR^2\otimes\prR^2\otimes\prR^2$?


\bibliographystyle{alpha}
\bibliography{references}

\end{document}